\documentclass[11pt, a4paper, USenglish]{amsart}

\usepackage[USenglish]{babel}
\usepackage{mathtools}
\usepackage{graphicx}
\usepackage{amsmath,amsthm,amssymb}
\usepackage{geometry}
\usepackage{tikz}
\usepackage{tikz-cd}
\usepackage[hidelinks]{hyperref}
\usepackage{csquotes}
\usepackage{longtable}
\usepackage{enumitem}
\usepackage{caption}
\usepackage{quiver}
\usepackage[
backend=biber,
style=alphabetic,
sorting=nyt,
maxbibnames=100,
isbn=false,
doi=false,
url=false
]{biblatex}

\newcommand{\calX}{\mathcal{X}}
\newcommand{\calY}{\mathcal{Y}}
\newcommand{\FFp}{\mathbb{F}_p}
\newcommand{\bFFp}{\overline{\mathbb{F}}_p}
\newcommand{\FFpn}{\mathbb{F}_{p^n}}

\DeclareMathOperator{\Pic}{\mathrm{Pic}}
\DeclareMathOperator{\Aut}{\mathrm{Aut}}
\DeclareMathOperator{\Spec}{\mathrm{Spec}}
\DeclareMathOperator{\Spf}{\mathrm{Spf}}
\DeclareMathOperator{\NS}{\mathrm{NS}}
\DeclareMathOperator{\Def}{\mathrm{Def}}
\DeclareMathOperator{\rk}{\mathrm{rk}}

\theoremstyle{plain}
\newtheorem{theorem}{Theorem}[section]
\newtheorem{proposition}[theorem]{Proposition}
\newtheorem{lemma}[theorem]{Lemma}
\newtheorem{cor}[theorem]{Corollary}

\theoremstyle{definition}
\newtheorem{definition}[theorem]{Definition}
\newtheorem*{notation}{Notation}
\newtheorem{remark}[theorem]{Remark}

\newtheorem*{acknowledgments}{Acknowledgments}
\newtheorem*{setup}{Setup}

\newcommand{\nocontentsline}[3]{}
\let\origcontentsline\addcontentsline
\newcommand\stoptoc{\let\addcontentsline\nocontentsline}
\newcommand\resumetoc{\let\addcontentsline\origcontentsline}

\title{K3 surfaces with finite automorphism group in positive characteristic}
\author{Alexandros Vlachos}
\begin{document}
\begin{abstract}
    We classify K3 surfaces with finite automorphism group in positive characteristic. The classification is the same as over the complex numbers, except when the base field is $\bFFp$.
\end{abstract}
\maketitle
\tableofcontents

\section{Introduction}
Over the complex numbers, K3 surfaces with finite automorphism group are classified according to their Picard group, which equipped with the intersection pairing is a lattice. The classification has been carried out by Pjatecki\u i-\v Sapiro and \v Safarevi\v c in \cite{pjateckisapiroshafarevic} for Picard ranks $\rho=1,2$, Nikulin in \cite{Nikulin1983, MR752938} for $\rho=3$ and $\rho \geq 5$, and Vinberg in \cite{vinberg} for $\rho=4$. The key observation is that for a complex K3 surface $X$ the automorphism group $\Aut(X)$ is isomorphic to $\mathrm{O}(\Pic(X))/W$ up to a finite group, where $W$ is the Weyl group of $\Pic(X)$ generated by reflections along $-2$ vectors. Thus, the finiteness of $\Aut(X)$ is determined by the finiteness of the factor group $\mathrm{O}(\Pic(X))/W$. One proceeds by classifying the even, hyperbolic lattices $\Lambda$ for which the quotient $\mathrm{O}(\Lambda)/W$ is finite and then determining which ones appear as Picard groups of complex K3 surfaces. For $\rho=1,2$ there are infinitely many isomorphism classes of Picard lattices, while for $\rho \geq 3$ the output of the Nikulin-Vinberg classification is a list of 118 Picard lattices. The full list of lattices can be found in Table \ref{table1} in the \hyperref[table]{Appendix}. Recently, Roulleau in \cite{roul} has given explicit geometric constructions for most of the 118 families of $\rho \geq 3$. 

The aim of this paper is to extend the classification of K3 surfaces with finite automorphism group to positive characteristic. Our main result is the following:

\begin{theorem}\label{mainthm}
    Let $k$ be an algebraically closed field of positive characteristic.
    \begin{enumerate}[left= 0pt]
        \item\label{mainthmitem1} A K3 surface $X$ over $k$ has finite automorphism group if and only if $\Pic(X)$ is in Table \ref{table1}.
        \item\label{mainthmitem2} If $k\neq \bFFp$, then for each lattice $\Lambda$ in Table \ref{table1} there exists a K3 surface $X$ over $k$ with $\Pic(X)\cong\Lambda$.
    \end{enumerate}
\end{theorem}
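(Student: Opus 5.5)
For part \eqref{mainthmitem1} the plan is to carry the complex argument over to positive characteristic; the key input is the crystalline (or $\ell$-adic) Torelli-type description of automorphisms. First I dispose of supersingular K3 surfaces. Their Picard rank is $22$, and Table \ref{table1} contains no lattice of rank $22$. So it is enough to show that supersingular surfaces have infinite automorphism group. I would do this with the supersingular Torelli theorem of Ogus, which holds in all characteristics after Bragg--Lieblich. It makes $\Aut(X)$ commensurable with the subgroup of $\mathrm{O}(\NS(X))/W$ preserving the period. Kond\=o--Shimada and others show this group is infinite, for instance by exhibiting elliptic fibrations with infinite Mordell--Weil group.

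For finite height, I would use the fact, due to Lieblich--Maulik and to Ito--Ito--Koshikawa, that $\Aut(X)$ is commensurable with $\mathrm{O}(\Pic(X))/W$. More precisely, the image of $\Aut(X)\to \mathrm{O}(\Pic(X))$ has finite kernel and finite index in the subgroup preserving the ample cone. The ample cone is a fundamental domain for $W$, so $\Aut(X)$ is finite if and only if $\mathrm{O}(\Pic(X))/W$ is finite. The Nikulin--Vinberg--Pjatecki\u i-\v Sapiro--\v Safarevi\v c classification is purely lattice-theoretic. It therefore lists all even hyperbolic lattices $\Lambda$ of rank $\le 20$ with $\mathrm{O}(\Lambda)/W$ finite. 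To match exactly Table \ref{table1}, I must also check that every lattice of that lattice-theoretic list which is not in the table fails to occur. Over $\mathbb{C}$ the only obstruction to being a Picard lattice is embedding into $U^3\oplus E_8^2$. In characteristic $p$ with finite height, the Picard lattice lifts to characteristic $0$ together with $X$ (Lieblich--Maulik, Deligne). Hence it embeds primitively into $U^3\oplus E_8^2$, and the same obstruction applies.

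For part \eqref{mainthmitem2}, fix $\Lambda$ in Table \ref{table1}. By the complex classification there is a K3 surface over $\mathbb{C}$, hence over $\overline{\mathbb{Q}}$, with $\Pic\cong \Lambda$. The plan is to reduce modulo $p$ and then control the Picard lattice. Reduction at a good prime gives an injection $\Lambda\hookrightarrow \Pic(X_{\bar{\mathbb{F}}_p})$, but the rank typically jumps, since over $\bFFp$ the rank is always even by Tate. This is why $\bFFp$ is excluded. I would therefore pass to a field of transcendence degree $\ge 1$ and work in the moduli space of $\Lambda$-polarized K3 surfaces over $\mathbb{Z}_{(p)}$, which is smooth of relative dimension $20-\rk\Lambda$ over the ordinary locus. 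On this space the locus where $\Pic$ is strictly larger than $\Lambda$ is a countable union of proper closed subvarieties, namely Noether--Lefschetz divisors. Over an uncountable field the very general point avoids them. Over a countable field that is not algebraic over $\FFp$, for example $\overline{\mathbb{F}_p(t)}$, I would instead take a one-parameter family. Either I choose it so that the generic fiber has Picard lattice exactly $\Lambda$, which follows by specialization and the fact that the Picard rank of the geometric generic fiber is minimal, or I argue that a nonisotrivial curve meets only the divisors of the curve's own Picard lattice. The case $\rk\Lambda=20$ needs special care, because the moduli space is then zero-dimensional in characteristic $0$, while in characteristic $p$ the rank-$20$ lattices are realized as ordinary reductions of singular K3 surfaces. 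For these I would choose $p$-ordinary reduction and use the Nygaard--Ogus canonical lift, which over $\bFFp$ would give rank $20$ or $22$. Alternatively I would deform in characteristic $p$, where the deformation space of a rank-$20$ lattice-polarized surface has dimension $\ge 0$ and positive-dimensional families exist off the supersingular locus. Passing to $k$ transcendental over $\FFp$ is then handled via the geometric generic fiber.

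The main obstacle I expect is exactly this last step: guaranteeing $\Pic=\Lambda$ and not merely $\Pic\supseteq\Lambda$, especially for large rank, and especially in the small characteristics where the lattice has $p$-torsion discriminant or the chosen model has bad reduction. I would first try the case-free Noether--Lefschetz and generic-fiber argument. If $p$ divides the discriminant, I would fall back on Taelman's or Matsumoto's realization results for Picard lattices in positive characteristic.
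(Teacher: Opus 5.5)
Your argument for part (1) is correct. It differs from the paper only in how it is organized. You apply the Lieblich--Maulik commensurability of $\Aut(X)$ with $\mathrm{O}(\Pic(X))/W$ in both directions, and use the lift to $W(k)$ only to see that $\Pic(X)$ is a complex Picard lattice. The paper instead transports finiteness to the geometric generic fibre of the lift, via the injective specialization $\Aut(\mathcal{X}_{\bar\eta})\hookrightarrow\Aut(X)$. For supersingular $X$ no Torelli theorem is needed: in odd characteristic Theorem~\ref{observationp} already applies, and in characteristic $2$ the paper uses explicit elliptic fibrations, as in your fallback.

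Part (2), however, has a genuine gap. The essential case is $k=\overline{\mathbb{F}_p(t)}$. Every algebraically closed $k\neq\bFFp$ contains it, and Picard groups of K3 surfaces do not change under extension of algebraically closed fields. So the very-general-point argument over uncountable fields does not help. For the one-parameter family you offer no working mechanism.
\begin{itemize}
\item Specialization only gives $\rho(\mathcal{X}_{\bar\eta})\le\rho(\mathcal{X}_s)$. Every closed fibre over $\bFFp$ has even rank, hence rank $\ge\rk\Lambda+1$ when $\rk\Lambda$ is odd. So minimality of the generic rank says nothing about equality with $\rk\Lambda$.
\item Your alternative, that a nonisotrivial curve meets only the divisors of its own Picard lattice, is unproved. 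Curves over $\bFFp$ and Noether--Lefschetz loci are both countable, so no counting argument rescues it.
\end{itemize}
The missing idea is local at a point $X/\bFFp$. The loci $\Spf D'\subset\Spf D$ of intermediate primitive lattices $j(\Lambda)\subsetneq\Lambda'\subseteq\Pic(X)$ may be infinitely many. But $X$ and $\Pic(X)$ descend to a finite field, so their tangent spaces are base changes of proper subspaces of a finite-dimensional $\mathbb{F}_{p^n}$-vector space. Hence only finitely many occur, and they cannot cover $T_{D,m}$ over the infinite field $\bFFp$ (Lemma~\ref{tangentunion}). A curve germ in a direction outside all of them is then algebraized, via Artin approximation, over a Henselian DVR $A$ with $\overline{\mathrm{Frac}(A)}\cong\overline{\mathbb{F}_p(t)}$.

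You also need that an extra class on $\mathcal{X}_{\bar\eta}$ forces the germ into some $\Spf D'$ with $\Lambda'$ \emph{primitive}. This is where ordinarity enters, through Lemma~\ref{tfreelemma}: the cokernel of specialization is torsion-free in equal characteristic, proved using the bijective Frobenius on $H^2(X,\mathcal{O}_X)$. Without it, an extra class specializing into $j(\Lambda)+p\Pic(X)$ is invisible to every tangent-space computation. For the same reason, properness of Noether--Lefschetz loci in characteristic $p$ is not the Hodge-theoretic statement you are importing from $\mathbb{C}$.

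You also never produce the starting point, which your moduli-space argument silently presupposes (a nonempty ordinary locus in the characteristic-$p$ fibre). For every $p$, including $2$ and $3$, and every $\Lambda$ in Table~\ref{table1}, the method needs an ordinary K3 surface over $\bFFp$ into whose Picard lattice $\Lambda$ embeds primitively, with a big and nef class in the image. Reducing a $\overline{\mathbb{Q}}$-surface with $\Pic\cong\Lambda$ does not provide this, for three reasons:
\begin{itemize}
\item such a surface need not have potential good reduction at $p$;
\item its reduction may be supersingular;
\item Lemma~\ref{jangcokernel} only excludes prime-to-$p$ torsion in the cokernel of specialization.
\end{itemize}
The paper's device is that every lattice of Table~\ref{table1} embeds primitively into $U\oplus E_8^{\oplus2}\oplus A_1$ (Proposition~\ref{simplification}). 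Any primitive embedding becomes a quasipolarization after reflections in $(-2)$-curves (Lemma~\ref{primitivepolarization}). So one ordinary $U\oplus E_8^{\oplus2}\oplus A_1$-primitive surface per $p$ suffices. For $p>3$ it comes from reducing the singular K3 surface with $T=\mathbb{Z}(2)\oplus\mathbb{Z}(2(p-1))$. Here $p$ splits in $\mathbb{Q}(\sqrt{-(p-1)})$ and $p\nmid 4(p-1)$. So Matsumoto's and Ito's results, Lemma~\ref{jangcokernel} and a discriminant count give good reduction, finite height and primitivity. For $p=2,3$ the paper uses $S(\Gamma_1(7))$ and $S(\Gamma_1(8))$.

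Finally, Table~\ref{table1} contains no lattice of rank $20$, so your special treatment of that case is unnecessary. Falling back on Taelman-type realization results over finite fields does not address the main obstacle either, since over $\bFFp$ the geometric Picard rank is always even.
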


In particular, the classification in positive characteristic is exactly the same as over the complex numbers, at least when $k \neq \bFFp$. However, the second assertion of Theorem \ref{mainthm} is not true over $\bFFp$, as the Tate conjecture implies that the Picard rank of K3 surfaces over $\bFFp$ is even (see \cite[Corollary 17.2.9]{huyk3}). This is in contrast to the situation in characteristic $0$, where the corresponding assertion is true over $\bar{\mathbb{Q}}$, see \cite[Remark 17.2.16]{huyk3} for a discussion.

Let us give a summary of the arguments and explain the structure of the paper. 

In Section \ref{reduction-section}, we establish the first assertion of Theorem \ref{mainthm} by combining the lifting results from \cite{lieblichmaulik} with the known classification over the complex numbers.

The rest of this paper is devoted to proving the second assertion of Theorem \ref{mainthm}. Note that it is enough to prove it for $k=\overline{\FFp(t)}$, and the general case will follow from base change. Over the complex numbers, the corresponding assertion is proved via the Global Torelli Theorem, which is not available in positive characteristic. We overcome this in roughly two steps:

In Section \ref{keyresult-section} we establish the key technical result needed. Using various deformation arguments, we prove that for a lattice $\Lambda$, the geometric generic fiber of a suitable one-parameter deformation of an ordinary $\Lambda$-quasipolarized K3 surface over $\bFFp$ has Picard group isometric to $\Lambda$. We refer to Proposition \ref{main result} for a precise statement.

In view of the above result, it suffices to construct ordinary $\Lambda$-quasipolarized K3 surfaces over $\bFFp$ for each $\Lambda$ in Table \ref{table1}. This is carried out in Section \ref{construction-section}. The important observation is that every lattice in Table \ref{table1} embeds primitively into the lattice $U\oplus E_8^{\oplus 2}\oplus A_1$, which implies that we only need to construct one ordinary $U\oplus E_8^{\oplus 2}\oplus A_1$-quasipolarized K3 surface for each prime characteristic. For this, we use K3 surfaces of Picard rank 20 defined over number fields and reduction modulo primes.

In Section \ref{automorphism section}, we briefly comment on the general structure of the automorphism groups, and compute the automorphism group of a K3 surface with Picard lattice $U\oplus E_8 ^{\oplus 2} \oplus A_1$.
\begin{acknowledgments}
This paper is an extended version of my master's thesis at the University of Bonn. I would like to express my gratitude to my master's thesis advisor Gebhard Martin for suggesting this problem, for his continued patience and support, and for many helpful discussions, comments and feedback. I would also like to thank Matthias Schütt for helpful discussions and comments on this manuscript. 

ChatGPT was used to assist with parts of the drafting process. The mathematical ideas were developed without AI assistance.

Part of this work was funded by the Deutsche Forschungsgemeinschaft — RTG2965 — Project number 512730679.
\end{acknowledgments}

\newpage

\section{Preliminaries}

\stoptoc

\subsection{Generalities on K3 surfaces}
We begin with some basic definitions and properties of K3 surfaces. Most of the material in this section is adapted from \cite{huyk3}.

\begin{definition}
    A K3 surface $X$ is a smooth, projective, integral surface over an algebraically closed field such that $H^1(X,\mathcal{O}_X)=0$ and $\omega_X\cong\mathcal{O}_X$.
\end{definition}

\begin{proposition}
    Let $X$ be a K3 surface. Then $\Pic(X)=\NS(X)=\mathrm{Num}(X)$.
\end{proposition}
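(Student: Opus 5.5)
We prove the two equalities separately: first $\Pic(X)=\NS(X)$ by showing that the connected component $\Pic^0_X$ of the Picard scheme is trivial, and then $\NS(X)=\mathrm{Num}(X)$ by showing that $\NS(X)$ has no torsion and that numerically trivial line bundles are torsion.

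For the first equality, recall that the Picard scheme $\Pic_X$ exists, since $X$ is smooth and projective. Its tangent space at the identity is $H^1(X,\mathcal{O}_X)$, which vanishes by definition of a K3 surface. Hence $\Pic^0_X$ is a connected group scheme of finite type with zero tangent space, and is therefore trivial even in positive characteristic, where it might a priori be non-reduced. It follows that $\Pic(X)=\Pic_X(k)$ is discrete, i.e.\ $\Pic(X)=\NS(X)$.

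For the second equality, let $L$ be a numerically trivial line bundle. By a standard result (Matsusaka, or the fact that $\mathrm{Num}$ is the quotient of $\NS$ by its torsion), some power $L^{n}$ is algebraically equivalent to zero, hence trivial by the previous step. So it remains to show that $\Pic(X)$ is torsion free. Suppose $L\neq\mathcal{O}_X$ with $L^n\cong\mathcal{O}_X$, where we may take $n$ to be prime. By Riemann–Roch on a K3 surface, since $L^2=0$ we have
\[
\chi(L)=2+\tfrac{1}{2}L^2=2.
\]
Hence $h^0(L)+h^0(L^{-1})=h^0(L)+h^2(L)\geq 2$ by Serre duality, with $\omega_X\cong\mathcal{O}_X$. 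So $L$ or $L^{-1}$ has a nonzero section. A nonzero section $s$ of a torsion line bundle gives a nonzero section $s^n$ of $L^n\cong\mathcal{O}_X$. This section is a nonzero constant and so vanishes nowhere, hence $s$ vanishes nowhere and $L\cong\mathcal{O}_X$, a contradiction.

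The step needing the most care is the torsion-freeness in characteristic $p$ when $n=p$. One might worry about étale versus $\mu_p$ covers, but the Riemann–Roch argument above is characteristic free. It uses only $\chi(\mathcal{O}_X)=2$, which follows from $h^0=h^2=1$ and $h^1=0$, so no separate treatment of $n=p$ is needed. The other point to verify is the smoothness of $\Pic^0_X$; here the vanishing tangent space suffices, so no $H^2$ obstruction argument is required.
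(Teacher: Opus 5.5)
Your proof is correct. The paper does not prove this statement itself; it is listed among the preliminaries taken from Huybrechts' book. The standard proof there coincides with your first step: $H^1(X,\mathcal{O}_X)=0$ means $\Pic^0_X$ has zero tangent space, hence is trivial, so $\Pic(X)=\NS(X)$.

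For $\NS(X)=\mathrm{Num}(X)$ you take a longer road than necessary. You first invoke Matsusaka's theorem to make a numerically trivial $L$ torsion, and only then use Riemann--Roch together with the nowhere-vanishing section $s^{\otimes n}$ of $L^{\otimes n}\cong\mathcal{O}_X$. The usual argument skips Matsusaka entirely. A numerically trivial $L$ already satisfies $L^2=0$, so $\chi(L)=2$, and hence $L$ or $L^{-1}$ is isomorphic to $\mathcal{O}_X(C)$ with $C$ effective. Since $C\cdot H=0$ for an ample $H$, one gets $C=0$ and $L\cong\mathcal{O}_X$.

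So your Riemann--Roch computation does the real work, and intersecting with an ample class instead of using the torsion trick makes the proof self-contained. Your route gains only the observation that torsion-freeness of $\Pic(X)$ needs no ample class. It costs reliance on a theorem (numerical versus algebraic equivalence up to torsion) that is considerably deeper than the statement being proved.
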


\begin{cor}
    Let $X$ be a K3 surface. Then $\Pic(X)$, equipped with the intersection form, is an even lattice of signature $(1,\rho(X) -1)$, where $\rho(X)$ is the rank of $\Pic(X)$.
\end{cor}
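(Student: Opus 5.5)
The corollary follows by combining the preceding proposition with the Riemann–Roch theorem and the Hodge index theorem. There are three things to check: that $\Pic(X)$ is a lattice, that the pairing is even, and that the signature is $(1,\rho(X)-1)$.

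\emph{Lattice structure.} By the proposition, $\Pic(X)=\NS(X)=\mathrm{Num}(X)$. The group $\mathrm{Num}(X)$ is finitely generated and torsion free. Moreover, the intersection pairing on it is nondegenerate by definition, since $\mathrm{Num}(X)$ is the quotient of divisors by numerical equivalence. Hence $\Pic(X)$, equipped with the intersection form, is a nondegenerate integral lattice of rank $\rho(X)$.

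\emph{Evenness.} For a line bundle $L$ on $X$, Riemann–Roch gives
\[
\chi(L)=\chi(\mathcal{O}_X)+\tfrac12\bigl(L^2-L\cdot K_X\bigr).
\]
Since $\omega_X\cong\mathcal{O}_X$, we have $K_X=0$, so $\chi(L)=\chi(\mathcal{O}_X)+\tfrac12 L^2$. Next we compute $\chi(\mathcal{O}_X)$. We have $h^0(\mathcal{O}_X)=1$ and $h^1(\mathcal{O}_X)=0$ by definition. Serre duality gives $h^2(\mathcal{O}_X)=h^0(\omega_X)=1$. So $\chi(\mathcal{O}_X)=2$, and therefore $\chi(L)=2+\tfrac12 L^2$. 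Since $\chi(L)$ is an integer, $L^2$ must be even. An integral lattice in which every vector has even square is even by definition, since $(x+y)^2=x^2+2xy+y^2$ causes no further constraints.

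\emph{Signature.} Let $H$ be an ample class, so $H^2>0$, and let $H^\perp$ be its orthogonal complement. The Hodge index theorem states that the intersection form restricted to $H^\perp\otimes\mathbb{R}$ is negative definite, valid for a smooth projective surface over any algebraically closed field. We have the decomposition
\[
\Pic(X)\otimes\mathbb{R}=\mathbb{R}H\oplus H^\perp.
\]
The form is positive on the first summand and negative definite on the second, and the second summand has dimension $\rho(X)-1$. This gives signature $(1,\rho(X)-1)$.

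I expect no real obstacle. The only point needing care is to use numerical equivalence, which the proposition provides, so that the form is nondegenerate and the Hodge index theorem applies in arbitrary characteristic without Hodge theory.
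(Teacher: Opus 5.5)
Your proof is correct and is the standard argument the paper relies on. The paper states this corollary without proof, following Huybrechts: $\Pic(X)=\mathrm{Num}(X)$ makes the intersection form a nondegenerate integral lattice, Riemann--Roch $\chi(L)=2+\tfrac12 L^2$ gives evenness, and the Hodge index theorem, valid in any characteristic, gives signature $(1,\rho(X)-1)$.
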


\begin{definition}
    Let $S$ be a scheme. A K3 surface over $S$ is a scheme $\calX$ over $S$ such that the structure morphism $\pi \colon \calX \to S$ is proper and smooth and all geometric fibers are K3 surfaces.
\end{definition}

\begin{definition}
    Let $S$ be a scheme. A K3 space over $S$ is an algebraic space $\calX$ over $S$ such that the structure morphism $\pi \colon \calX \to S$ is proper and smooth and all geometric fibers are K3 surfaces.
\end{definition}

\begin{notation}
    Throughout, $k$ will denote an algebraically closed field of positive characteristic $p$.
\end{notation}

Over fields of positive characteristic, K3 surfaces admit extra properties.
We first define the formal Brauer group of a K3 surface.

Let $X$ be a K3 surface over $k$. Let $\mathrm{Art}_k$ denote the category of local Artinian $k$-algebras with residue field $k$. Consider the functor \[\mathbf{Br}_X \colon \mathrm{Art}_k \to (\mathrm{Ab}), \ A \mapsto \mathrm{Ker}\,(\mathrm{Br}(X_A)\to\mathrm{Br}(X)),\]
where $\mathrm{Br}(X)\coloneqq H^2_{\acute{e}t}(X,\mathbb{G}_m)$ is the cohomological Brauer group of $X$ and similarly for $X_A$. 

\begin{proposition}
    Let $X$ be a K3 surface over $k$. Then $\mathbf{Br}_X$ is prorepresentable by a formally smooth, local, complete, one-dimensional $k$-algebra $R$ with residue field $k$.
\end{proposition}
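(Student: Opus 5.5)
This is the Artin–Mazur pro-representability result for the formal Brauer group. I would prove it by applying Schlessinger's criterion to $\mathbf{Br}_X$ and then computing its tangent and obstruction spaces from the cohomology of $X$.

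\textbf{Step 1 (reduce to additive cohomology).} For $A\in\mathrm{Art}_k$ the kernel $1+\mathfrak m_A\otimes\mathcal{O}_X$ of $\mathcal{O}_{X_A}^\times\to\mathcal{O}_X^\times$ is a unipotent sheaf. It is filtered by the powers $\mathfrak m_A^i$, and each graded piece is isomorphic to $\mathcal{O}_X\otimes_k(\mathfrak m_A^i/\mathfrak m_A^{i+1})$. Since $H^1(X,\mathcal{O}_X)=0$ and $H^3(X,\mathcal{O}_X)=0$, the long exact sequence of $1\to 1+\mathfrak m_A\mathcal{O}_X\to\mathbb{G}_{m,X_A}\to\mathbb{G}_{m,X}\to1$ gives the following:
\begin{itemize}
\item $\mathbf{Br}_X(A)\cong H^2(X,1+\mathfrak m_A\mathcal{O}_X)$, using that $\Pic(X_A)\to\Pic(X)$ is surjective;
\item this group is computed by induction along small extensions.
\end{itemize}
Indeed, the Picard surjectivity holds because line bundles on a K3 surface deform unobstructedly when $H^2(\mathcal{O}_X)$ obstructions vanish. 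Better still, the cokernel of $\Pic(X_A)\to\Pic(X)$ injects into $H^2(1+\mathfrak m\mathcal{O})$ compatibly, and one works with the sheaf $H^2$ functor directly.

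\textbf{Step 2 (exactness and Schlessinger).} For a small extension $0\to I\to A'\to A\to 0$, I would obtain the exact sequence
\[
H^1(X,\mathcal{O}_X)\otimes I\to \mathbf{Br}_X(A')\to\mathbf{Br}_X(A)\to H^3(X,\mathcal{O}_X)\otimes I.
\]
The outer terms vanish, so $\mathbf{Br}_X$ sends small extensions to extensions:
\[
0\to H^2(X,\mathcal{O}_X)\otimes I\to\mathbf{Br}_X(A')\to\mathbf{Br}_X(A)\to0.
\]
From this, left exactness for fiber products follows by the five lemma. The conditions (H1)–(H4) of Schlessinger then hold, and in fact the functor is pro-representable, not merely hull-admitting. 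The tangent space is $\mathbf{Br}_X(k[\epsilon])=H^2(X,\mathcal{O}_X)\cong k$ by Serre duality with $\omega_X\cong\mathcal{O}_X$. Surjectivity along small extensions means the functor is formally smooth. Hence the pro-representing ring $R$ is a formally smooth complete local $k$-algebra with one-dimensional tangent space, i.e.\ $R\cong k[[t]]$. The group structure makes it a one-dimensional formal group.

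\textbf{Main obstacle.} I expect the hard part to be the identification in Step 2: the vanishing $H^3(X,1+\mathfrak m\mathcal{O})=0$ and the precise handling of the Picard term. The concern is that $\mathbf{Br}_X(A)$ might differ from $H^2(1+\mathfrak m_A\mathcal{O}_X)$ by the cokernel of $\Pic(X_A)\to\Pic(X)$, which need not be zero for all line bundles. The cleanest route, following Artin–Mazur, is to \emph{define} the deformation functor as $A\mapsto H^2_{\acute{e}t}(X,1+\mathfrak m_A\mathcal{O}_X)$, i.e.\ via $\widehat{\mathbb{G}}_m$-cohomology. I would check that this agrees with the kernel description, or else cite Artin–Mazur for this identification.
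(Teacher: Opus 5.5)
The paper gives no proof of this statement: it is quoted as background (the Artin--Mazur theorem, as presented in \cite[Ch.~18]{huyk3}). Your argument, Schlessinger's criterion applied to the cohomology of $1+\mathfrak m_A\mathcal{O}_X$, is the standard proof, and the outline is right. The tangent space is $H^2(X,\mathcal{O}_X)\cong k$, smoothness comes from $H^3(X,\mathcal{O}_X)=0$, and (H4) comes from $H^1(X,1+\mathfrak m_A\mathcal{O}_X)=0$, which follows from $H^1(X,\mathcal{O}_X)=0$ by induction on the length of $A$.

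The one wrongly justified step is the one you flag as the main obstacle. Line bundles on a K3 surface are \emph{not} unobstructed in general. The obstruction space $H^2(X,\mathcal{O}_X)\cong k$ is nonzero and obstructions genuinely occur; this is why $\Def_{(X,\Lambda,j)}$ has dimension $20-\rk\,\Lambda$ rather than $20$ in Theorem~\ref{prorep}.

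What saves you is that $X_A=X\times_k\Spec A$ is the \emph{trivial} deformation. The projection $X_A\to X$ retracts the closed immersion $X\hookrightarrow X_A$, so pullback splits $\mathbb{G}_{m,X_A}\to\mathbb{G}_{m,X}$ as sheaves on $X_{\acute{e}t}$. Hence the long exact sequence breaks into split short exact sequences, and $\mathbf{Br}_X(A)=H^2(X,1+\mathfrak m_A\mathcal{O}_X)$ exactly. There is no cokernel of $\Pic(X_A)\to\Pic(X)$ to worry about, and no need to redefine the functor or cite Artin--Mazur for this identification.

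Finally, the four-term sequence displayed in Step~2 is garbled. It should read $H^1(X,1+\mathfrak m_A\mathcal{O}_X)\to H^2(X,\mathcal{O}_X)\otimes_k I\to\mathbf{Br}_X(A')\to\mathbf{Br}_X(A)\to H^3(X,\mathcal{O}_X)\otimes_k I$. Both end terms vanish, which gives the short exact sequence you then use.
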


Since the functor $\mathbf{Br}_X$ takes values in the category of abelian groups $(\mathrm{Ab})$, the formal scheme $\mathrm{Spf}(R)$ has a group structure. 

\begin{definition}
    With the above notation, the formal Brauer group $\widehat{\mathrm{Br}}_X$ of $X$ is $\mathrm{Spf}(R)$.
\end{definition}

\begin{definition}
    The height of a K3 surface $X$ over $k$ is defined as the height $h(X)\coloneqq h(\widehat{\mathrm{Br}}_{X})$ of its formal Brauer group. 
\end{definition}

The height of a K3 surface can alternatively be defined using crystalline cohomology. Even though we will not pursue this approach here, both definitions are equivalent. The crystalline cohomology definition shows that the height of a K3 surface $X$ satisfies $h(X)=1, \dots,10$ or $h(X)=\infty$. A K3 surface $X$ with $h(X)\leq 10$ will be called \emph{of finite height}.

\begin{proposition}\label{heightinequality}
    Let $X$ be a K3 surface of finite height over $k$. Then $\rho(X)\leq 22-2h(X)$.
\end{proposition}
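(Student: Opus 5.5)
The plan is to compare the Néron–Severi group with second crystalline cohomology and read off the bound from slopes. Let $W=W(k)$ and $K=\mathrm{Frac}(W)$. Put $H\coloneqq H^2_{\mathrm{cris}}(X/W)$, a free $W$-module of rank $22$ with Frobenius-semilinear $\varphi$. I will use the standard comparison that makes the two definitions of height agree: for $h=h(X)<\infty$, the F-crystal $H\otimes K$ has Newton slopes $1-\tfrac1h$ with multiplicity $h$, $1$ with multiplicity $22-2h$, and $1+\tfrac1h$ with multiplicity $h$. This comes from the Artin–Mazur identification of the Dieudonné module of $\widehat{\mathrm{Br}}_X$ with the slope $<1$ part of $H$, together with Poincaré duality, which makes the slopes symmetric about $1$. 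I will quote this rather than reprove it.

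First, take the crystalline Chern class $c_1\colon \Pic(X)\otimes W\to H$, obtained from $d\log\colon \mathcal{O}_X^\times\to W\Omega^1_X$. It is compatible with cup product, and its image lies in the part where $\varphi=p$, because classes of line bundles are Frobenius-twisted by $p$. Since the intersection form on $\Pic(X)$ is nondegenerate (the preceding Corollary says it is a lattice of signature $(1,\rho-1)$) and cup product on $H$ restricts to it, the map $c_1\otimes K\colon\Pic(X)\otimes K\to H\otimes K$ is injective. Its image is a $\varphi$-stable $K$-subspace of dimension $\rho(X)$ on which $\varphi=p$, so all its slopes equal $1$.

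Second, by Dieudonné–Manin the slope-$1$ isotypic part of $H\otimes K$ has dimension $22-2h$, and this part contains $\Pic(X)\otimes K$. Hence $\rho(X)\le 22-2h(X)$.

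The main obstacle is the slope description of $H$. In particular one needs that $\Pic(X)\otimes K$ sits inside the slope-$1$ part and not merely inside $H\otimes K$, and that this part has dimension exactly $22-2h$. Both follow from the identification $D(\widehat{\mathrm{Br}}_X)\cong (H\otimes K)_{[0,1)}$ and duality. In the paper I would simply cite this from \cite{huyk3}, since the height was only defined there and the crystalline equivalence was stated without proof.
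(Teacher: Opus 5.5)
Your argument is correct. It is the standard Artin--Mazur proof, which is what the paper relies on here: the paper states the proposition without proof and defers to \cite{huyk3}. The crystalline Chern classes span a sub-isocrystal of $H^2_{\mathrm{cris}}(X/W)\otimes K$ that is pure of slope $1$, so it lies in the slope-$1$ part, whose dimension is $22-2h(X)$.

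One small point needs tidying. Since $\varphi$ is $\sigma$-semilinear, on the $K$-span of $c_1(\Pic(X))$ one has $\varphi=p\cdot(1\otimes\sigma)$ with respect to the basis coming from $\Pic(X)$, not literally $\varphi=p$. Also, the containment in the slope-$1$ isotypic component uses that there are no nonzero morphisms between isoclinic isocrystals of different slopes.
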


\begin{definition}
    A K3 surface $X$ over $k$ is called ordinary if it satisfies one of the following equivalent conditions:
    \begin{enumerate}
        \item $h(X)=1$.
        \item The Frobenius endomorphism of $H^2(X,\mathcal{O}_X)$ is a bijection.
    \end{enumerate}
\end{definition}

\begin{definition}
    A K3 surface $X$ over $k$ is called supersingular if it satisfies one of the following equivalent conditions:
    \begin{enumerate}
        \item $h(X)=\infty$ (Artin-supersingular).
        \item $\rho(X)=22$ (Shioda-supersingular).
    \end{enumerate}
\end{definition}

\begin{remark}
    By Proposition \ref{heightinequality}, K3 surfaces over $k$ with $\rho=19, 20$ are ordinary.
\end{remark}

\subsection{The specialization map}

We introduce the specialization map (\cite[X, App. 7]{sga6}), an important tool that will be used repeatedly in the following sections. Our exposition closely follows \cite{poonenmaulikns}.

\begin{proposition}\cite[Proposition 3.3]{poonenmaulikns}\label{specializationmap}
    Let $R$ be a DVR with fraction field $K$ and residue field $k'$. Fix an algebraic closure $\overline{K}$ of $K$. Choose a nonzero prime ideal $\mathfrak{p}$ of the integral closure $\overline{R}$ of $R$ in $\overline{K}$, so $\overline{k'}\coloneqq \overline{R}/\mathfrak{p}$ is an algebraic closure of $k'$. Let $\calX$ be a smooth proper $R$-scheme. Denote by $\calX_{\bar{\eta}}$ and $\calX_{\bar{0}}$ the geometric generic and the geometric special fiber, respectively. Then there is a natural homomorphism \[\overline{\mathrm{sp}}\colon \Pic(\calX_{\bar{\eta}})\to \Pic(\calX_{\bar{0}})\] depending only on the choices above. Moreover, if $\overline{\mathrm{sp}}$ maps a class $[\mathcal{L}]$ to an ample class, then $\mathcal{L}$ is ample.
\end{proposition}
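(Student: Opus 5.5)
The plan is to build $\overline{\mathrm{sp}}$ in three stages: extend line bundles across the DVR, pass to the algebraic closure using finite extensions of $K$, and check that the result does not depend on the auxiliary choices. The ampleness statement then follows from openness of ampleness in proper families.

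\emph{Step 1: extension over $R$.} Since $\calX$ is smooth over $R$, it is regular. Hence every line bundle $\mathcal{L}_K$ on $\calX_K$ extends to a line bundle $\mathcal{L}$ on $\calX$: take the closure of a Weil divisor representing $\mathcal{L}_K$, which is Cartier because $\calX$ is regular. Two extensions differ by a divisor supported on the special fiber $\calX_{k'}$. Each component of $\calX_{k'}$ is principal, being cut out by a uniformizer (assuming $\calX_{k'}$ is connected, or treating each connected component separately). So the restriction $\mathcal{L}|_{\calX_{k'}}$ is well defined, and this gives a homomorphism $\mathrm{sp}\colon\Pic(\calX_K)\to\Pic(\calX_{k'})$.

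\emph{Step 2: passage to $\overline K$.} Every class in $\Pic(\calX_{\bar\eta})$ is defined over some finite extension $L/K$ inside $\overline K$. Let $R_L$ be the localization at $\mathfrak p\cap R_L$ of the integral closure of $R$ in $L$. This is a DVR, by Krull–Akizuki. Its residue field $k_L$ embeds into $\overline{k'}$ via $\overline R/\mathfrak p$. The base change $\calX_{R_L}$ is still smooth over $R_L$, so Step 1 applies and gives $\Pic(\calX_L)\to\Pic(\calX_{k_L})\to\Pic(\calX_{\bar 0})$. Compatibility under enlarging $L\subset L'$ holds because pulling back an extension over $R_L$ gives an extension over $R_{L'}$, and the residue fields are compatible inside $\overline R/\mathfrak p$. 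Taking the colimit over $L$ defines $\overline{\mathrm{sp}}$. Here I use that $\Pic(\calX_{\bar\eta})=\varinjlim_L \Pic(\calX_L)$, which holds since $\calX$ is of finite presentation. Naturality and additivity are clear from the construction, and the construction depends only on $\overline K$ and $\mathfrak p$.

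\emph{Step 3: ampleness.} Suppose $\overline{\mathrm{sp}}([\mathcal{L}])$ is ample. Choose $L$ and an extension $\mathcal{L}$ over $R_L$ as above. Ampleness can be checked after field extension, so $\mathcal{L}|_{\calX_{k_L}}$ is ample. By openness of ampleness for proper flat families (EGA III, 4.7.1), $\mathcal{L}$ is ample on a neighborhood of the closed fiber. Every open neighborhood of the closed point of $\operatorname{Spec} R_L$ is all of $\operatorname{Spec} R_L$, and properness transfers this to $\calX_{R_L}$. Hence $\mathcal{L}$ is ample on the generic fiber $\calX_L$, and therefore on $\calX_{\bar\eta}$.

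\emph{Main obstacle.} The delicate point is well-definedness in Step 1. Extensions differ by vertical divisors, and I need these to restrict trivially to the special fiber. This works when the special fiber is integral, or at least connected and reduced, which is the case here since $\calX$ is smooth: the fiber is then a sum of disjoint components, each a principal divisor locally. I would handle this first by reducing to connected components.
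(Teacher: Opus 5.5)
Your proposal is correct and follows the same route as the paper: extend line bundles across the regular model $\calX$, pass to $\overline{K}$ by taking a direct limit over the DVRs obtained from Krull--Akizuki by localizing at $\mathfrak{p}$, and deduce ampleness from EGA III 4.7.1 over the local base. The paper cites the isomorphism $\Pic(\calX_\eta)\xleftarrow{\sim}\Pic(\calX)$ from the N\'eron models book, whereas you check directly that restriction to the special fiber is well defined (which also covers a disconnected special fiber) and you spell out the descent of ampleness from $\calX_{\bar 0}$ to $\calX_{k_L}$; both are harmless refinements of the same argument.
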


\begin{proof}
    If $\mathcal{L}$ is a line bundle on $\calX_{\eta}$ whose image in $\Pic(\calX_0)$ is ample, then the corresponding line bundle on $\calX$ is ample relative to $\Spec(R)$ by \cite[Th\'eor\`eme 4.7.1]{ega3}, so $\mathcal{L}$ is also ample.
    As in the proof of \cite[Theorem 8.4.3]{neronmodels}, we have \[\label{sphom} \tag{$*$}\Pic(\calX_{\eta})\xleftarrow{\sim} \Pic(\calX) \to \Pic(\calX_{0}).\] For each finite extension $L$ of $K$ in $\overline{K}$, the integral closure $R_L$ of $R$ in $L$ is a Dedekind domain by the Krull-Akizuki theorem and localizing at $\mathfrak{p}\cap R_L$ gives a discrete valuation ring $R'_L$. Taking the direct limit of the analogue of (\ref{sphom}) for $R'_L$ yields $\overline{\mathrm{sp}}\colon \Pic(\calX_{\bar{\eta}})\to \Pic(\calX_{\bar{0}})$.
\end{proof}

\begin{remark}
    The previous proposition is also valid when $\calX$ is an algebraic space via the same proof, see \cite[Tag 0DNG]{stacks-project}.
\end{remark}

\begin{remark}
    When $\calX \to \Spec(R)$ is a K3 space, the specialization map $\overline{\mathrm{sp}}$ is also injective and compatible with the intersection forms. This is a consequence of the local constancy of the Euler characteristic for flat morphisms and the fact that the Picard group of a K3 surface is a lattice.
\end{remark}

\resumetoc

\section{Finite automorphisms and Picard lattices}\label{reduction-section}

In this section, we prove the first assertion of Theorem \ref{mainthm}. We begin with the converse implication. As it turns out, the key observation mentioned in the introduction is also valid for most cases in positive characteristic.

\begin{theorem}\cite[Theorem 6.1, Remark 6.3]{lieblichmaulik}\label{observationp}
    Let $X$ be a K3 surface over $k$ if $\mathrm{char}(k)\geq 3$, or a K3 surface of finite height if $\mathrm{char}(k)=2$. Then the natural map $\Aut(X) \to \mathrm{O}(\Pic(X))/W$ has finite kernel and cokernel, where $W$ is the subgroup of $\mathrm{O}(\Pic(X))$ generated by reflections along $-2$-curves.  
\end{theorem}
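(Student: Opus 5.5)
Since the statement is a cited result of Lieblich and Maulik, I will only outline how I would reconstruct it. The plan is to compare both groups through their actions on the positive cone of $\Pic(X)_{\mathbb{R}}$.

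\textbf{Setting up the map.} Let $\mathcal{C}$ be the component of the positive cone containing the ample classes. The walls $\delta^\perp$ for $\delta^2=-2$ cut $\mathcal{C}$ into chambers, and $W$ acts simply transitively on them. By Riemann--Roch every $-2$ class is effective up to sign, so the nef cone is one chamber, and the walls of the nef cone are exactly those of smooth rational curves. It follows that $\mathrm{O}(\Pic(X))/W$ is, up to the finite factor $\{\pm 1\}$, the stabilizer $\mathrm{O}(\Pic(X))_{\mathrm{Nef}}$ of the nef cone. The map in the statement is then $\Aut(X)\to \mathrm{O}(\Pic(X))_{\mathrm{Nef}}$, $f\mapsto (f^{-1})^*$.

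\textbf{Finite kernel.} An automorphism acting trivially on $\Pic(X)$ fixes an ample class $H$, so it lies in $\Aut(X,H)$. This is a group scheme of finite type, being a closed subgroup of $\mathrm{PGL}$ via $|nH|$. It is also unramified, because $H^0(X,T_X)=0$ for K3 surfaces in any characteristic (Rudakov--Shafarevich, Nygaard). Hence $\Aut(X,H)$ is finite.

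\textbf{Finite cokernel, finite height case.} It suffices to realize by automorphisms every element of the finite-index subgroup $G\subset \mathrm{O}(\Pic(X))_{\mathrm{Nef}}$ acting trivially on the discriminant group $A_{\Pic(X)}$.
\begin{enumerate}
\item Lift $X$ over some finite extension of $W(k)$ to $\mathcal{X}$ such that the specialization map of Proposition \ref{specializationmap} is an isomorphism $\Pic(\mathcal{X}_{\bar\eta})\cong\Pic(X)$. For ordinary $X$ one can use the canonical lift. In general one uses the Nygaard--Ogus lifting theory of finite height K3 surfaces in odd characteristic, together with Lieblich--Maulik's extension to $p=2$. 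This is exactly where the hypotheses on $\mathrm{char}(k)$ and the height enter.
\item Embed the field of definition of the generic fiber into $\mathbb{C}$ to get a complex K3 surface $X_{\mathbb{C}}$ with $\Pic(X_{\mathbb{C}})\cong\Pic(X)$, compatibly with intersection forms. The $-2$ classes then agree. The ample cones also agree, since by Proposition \ref{specializationmap} a class specializing to an ample class is ample.
\item Extend $g\in G$ by the identity on the transcendental lattice. This is possible because $g$ acts trivially on the discriminant group. The result is a Hodge isometry of $H^2(X_{\mathbb{C}},\mathbb{Z})$ preserving the ample cone, so the Global Torelli Theorem gives an automorphism $f_{\mathbb{C}}$ realizing $g$.
\item Since $f_{\mathbb{C}}$ is defined over a finite extension of the base, it gives an automorphism of the generic fiber $\mathcal{X}_{\eta'}$. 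By Matsusaka--Mumford it specializes to a birational self-map of $X$. This map is biregular because $X$ is a minimal surface with $\omega_X\cong\mathcal{O}_X$, and it acts on $\Pic(X)$ by $g$, by compatibility with specialization.
\end{enumerate}

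\textbf{Supersingular case.} Here $\rho=22$, which exceeds the maximal Picard rank $20$ of a complex K3 surface, so no lift can carry the whole Picard lattice. For $p\ge 3$ I would instead invoke Ogus' crystalline Torelli theorem for supersingular K3 surfaces. An isometry of $\Pic(X)$ preserving the nef cone and compatible with the crystalline period (automatic on a finite-index subgroup) is induced by an automorphism. For $p=2$ this tool is unavailable, which is why the statement excludes that case.

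\textbf{Main obstacle.} I expect the hardest step to be the lifting step (1): producing a characteristic-zero lift in which all of $\Pic(X)$ lifts with matching ample cones, for an arbitrary finite-height $X$, including $p=2$. Everything after it is a formal consequence of Global Torelli and specialization.
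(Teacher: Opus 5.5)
Your outline is correct and is essentially the argument of Lieblich--Maulik that the paper cites rather than reproves: lift $X$ with its full Picard lattice (Proposition \ref{lbcor 2.1}, Lemma \ref{splemma}), apply Torelli in characteristic $0$, specialize automorphisms equivariantly (Theorem \ref{lbthm 2.1}), and use Ogus' crystalline Torelli theorem for supersingular $X$ in odd characteristic. The one step to justify more carefully is (4): the bare birational Matsusaka--Mumford specialization need not be compatible with $\overline{\mathrm{sp}}$ in general, since it can differ by reflections in $-2$-curves that do not lift; what actually gives equivariance is your observation in (2) that $\overline{\mathrm{sp}}$ matches the ample cones, which lets $f$ extend to an automorphism of the whole family (polarized Matsusaka--Mumford), exactly as in Theorem \ref{lbthm 2.1}.
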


\begin{proposition}\label{converseimplication}
    Let $X$ be a K3 surface over $k$ such that $\Pic(X)$ is a lattice in Table \ref{table1}. Then $\Aut(X)$ is finite.
\end{proposition}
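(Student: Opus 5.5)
The plan is to combine Theorem \ref{observationp} with the lattice-theoretic content of the Nikulin--Vinberg classification, which is independent of the base field. Table \ref{table1} lists exactly those even hyperbolic lattices $\Lambda$ of rank $\geq 3$ (plus the rank $1,2$ ones) for which $\mathrm{O}(\Lambda)/W(\Lambda)$ is finite, where $W(\Lambda)$ is the group generated by reflections in \emph{all} $-2$ vectors. This is a purely arithmetic statement about lattices, so it holds regardless of where $\Lambda$ comes from. Let $W_X\subseteq \mathrm{O}(\Pic(X))$ be the group generated by reflections along classes of smooth rational curves, as in Theorem \ref{observationp}. By Riemann--Roch on a K3 surface, every $-2$ class $\delta$ has $\delta$ or $-\delta$ effective, and the nef cone is a fundamental domain for $W(\Pic(X))$ acting on the positive cone. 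Hence $W(\Pic(X))$ is generated by the reflections in the $-2$-curves, i.e.\ $W_X=W(\Pic(X))$. This is the standard argument and works in any characteristic, since it only uses Riemann--Roch and $\omega_X\cong\mathcal{O}_X$. Consequently $\mathrm{O}(\Pic(X))/W_X$ is finite whenever $\Pic(X)$ is in Table \ref{table1}.

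First, in the cases covered by Theorem \ref{observationp}, namely $\mathrm{char}(k)\geq 3$, or $\mathrm{char}(k)=2$ with $X$ of finite height, the map $\Aut(X)\to \mathrm{O}(\Pic(X))/W$ has finite kernel, and its image lies in a finite group. Therefore $\Aut(X)$ is finite.

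Second, the remaining case is $\mathrm{char}(k)=2$ with $X$ supersingular. Then $\rho(X)=22$. I will check that no lattice in Table \ref{table1} has rank $22$: the table contains lattices of rank at most $19$, the maximum being $U\oplus E_8^{\oplus 2}\oplus A_1$. The rank $1,2$ families are also excluded. So this case never occurs. Note also that whenever $\Pic(X)$ is in the table we have $\rho(X)\leq 19<22$, so $X$ automatically has finite height in every characteristic. This removes the characteristic-$2$ caveat entirely.

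The main obstacle is the identification $W_X=W(\Pic(X))$ in positive characteristic. Concretely, one must show that every $-2$ class has an effective multiple decomposing into curves, and that the reflections along irreducible $-2$-curves generate the full Weyl group. I would follow the proof in \cite{huyk3} (Chapter 8), checking that it uses only Riemann--Roch, the Hodge index theorem and the adjunction formula. All of these hold over any algebraically closed field, which suffices to conclude.
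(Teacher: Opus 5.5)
Your proof is correct and is essentially the paper's argument: $\rho(X)\leq 19<22$ forces finite height, which covers the characteristic-$2$ hypothesis of Theorem \ref{observationp}. The finite kernel of $\Aut(X)\to \mathrm{O}(\Pic(X))/W$, together with the finiteness of $\mathrm{O}(\Pic(X))/W$ for the lattices of Table \ref{table1}, then gives the claim. Your extra check that reflections in $-2$-curves generate the full Weyl group of $-2$-vectors is standard and characteristic-free (via Riemann--Roch and the nef cone, as also used implicitly in Lemma \ref{primitivepolarization}); it makes explicit a point the paper's one-line proof leaves tacit.
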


\begin{proof}
    Every lattice in Table \ref{table1} is of rank $\leq 19$, so $X$ is of finite height. The result then follows from Theorem \ref{observationp} and the fact that $\Pic(X)$ is such that $\mathrm{O}(\Pic(X))/W$ is finite.
\end{proof}

It remains to prove the forward implication of the first assertion of Theorem \ref{mainthm}.

\begin{proposition} \label{forwardimplication}
        Let $X$ be a K3 surface with finite automorphism group over $k$. Then $\Pic(X)$ is one of the lattices in Table \ref{table1}.
\end{proposition}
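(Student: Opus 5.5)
Following the introduction, I would lift to characteristic $0$ using \cite{lieblichmaulik} and then apply the complex classification. Let $X$ be a K3 surface over $k$ with $\Aut(X)$ finite.

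\emph{Supersingular case.} First I exclude $\rho(X)=22$. For a supersingular K3 surface the lattice $\Pic(X)$ has rank $22$, and $\mathrm{O}(\Pic(X))/W$ is infinite; for instance, the rank $22$ Nikulin--Vinberg type argument says no hyperbolic lattice of rank $\geq 20$ has finite $\mathrm{O}/W$. In characteristic $\geq 3$, Theorem \ref{observationp} then forces $\Aut(X)$ to be infinite. In characteristic $2$, Theorem \ref{observationp} is only stated for finite height, so here I would instead appeal directly to the known infinitude of automorphism groups of supersingular K3 surfaces (e.g.\ via a genus one fibration whose Mordell--Weil group has positive rank). I expect this characteristic $2$ supersingular case to be the main obstacle, since the tools stated in the paper do not cover it. So from now on $X$ has finite height and $\rho(X)\leq 20$.

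\emph{Lifting.} By \cite{lieblichmaulik}, a K3 surface of finite height admits a lift $\calX\to\Spec W'$ over a finite extension $W'$ of $W(k)$ such that the specialization map $\overline{\mathrm{sp}}\colon\Pic(\calX_{\bar\eta})\to\Pic(X)$ of Proposition \ref{specializationmap} is an isomorphism of lattices. One can moreover arrange the lift so that $\Aut(\calX_{\bar\eta})\to\Aut(X)$ is injective, or at least has finite kernel. Choosing an embedding $\overline{K}\hookrightarrow\mathbb{C}$, we obtain a complex K3 surface $Y=\calX_{\bar\eta}\otimes\mathbb{C}$ with $\Pic(Y)\cong\Pic(\calX_{\bar\eta})\cong\Pic(X)$, since the Picard group is invariant under extension of algebraically closed fields in characteristic $0$.

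\emph{Transfer of finiteness.} To avoid any subtlety about lifting automorphisms, I would argue purely lattice-theoretically. By Theorem \ref{observationp}, finiteness of $\Aut(X)$ implies finiteness of $\mathrm{O}(\Pic(X))/W_X$, where $W_X$ is generated by reflections in $-2$-curves. Since $\Pic(X)$ is an even hyperbolic lattice, the group generated by reflections in $(-2)$-curves coincides with the full Weyl group generated by all $(-2)$-vectors: these reflections generate the reflection group of the nef cone chamber, and by Riemann--Roch every $(-2)$-vector is $\pm$ effective. Hence $\mathrm{O}(\Lambda)/W(\Lambda)$ is finite for $\Lambda=\Pic(X)$. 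This condition depends only on the lattice, so the complex K3 surface $Y$ has finite automorphism group, and by the Nikulin--Vinberg classification $\Lambda$ is in Table \ref{table1}. For ranks $1$ and $2$, Table \ref{table1} is presumably formulated via the corresponding lattice criterion, so the same conclusion holds directly.

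Finally, the rank $20$ case needs to be ruled out, or recognized as absent from the table. Since no lattice in Table \ref{table1} has rank $\geq 20$, the lattice criterion already excludes it.
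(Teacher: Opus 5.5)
Your argument is correct in outline, but after the supersingular case it takes a different route from the paper.

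\textbf{The paper's route.} After excluding supersingular surfaces, the paper never passes through $\mathrm{O}(\Pic(X))/W$. It takes the lift $\calX\to\Spec W(k)$ of Proposition \ref{lbcor 2.1} and shows that $\overline{\mathrm{sp}}$ is an isomorphism (Lemma \ref{splemma}) preserving ample cones (\cite[Corollary 2.4]{lieblichmaulik}). Since $H^0(X,T_X)=0$, Theorem \ref{lbthm 2.1} then gives an injection $\Aut(\calX_{\bar\eta})\hookrightarrow\Aut(X)$. So the characteristic $0$ lift itself has finite automorphism group, and the complex classification applies to it. The aside you discard (``one can arrange the lift so that $\Aut(\calX_{\bar\eta})\to\Aut(X)$ is injective'') is exactly the paper's proof.

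\textbf{Your route.} You use the finite-cokernel half of Theorem \ref{observationp} to get that $\mathrm{O}(\Lambda)/W(\Lambda)$ is finite. You use the lift only to realize $\Lambda$ as the Picard lattice of a complex K3 surface $Y$. Finiteness of $\Aut(Y)$ then follows from the elementary direction: an automorphism acting on $\Pic(Y)$ through $W$ preserves the nef chamber, so it acts trivially on $\Pic(Y)$, and the automorphisms fixing an ample class form a finite group. This is valid and uniform in the rank, and it avoids specializing automorphisms and ample cones. The cost is that it relies on the deeper finite-cokernel statement \cite[Theorem 6.1]{lieblichmaulik}, whereas the paper needs only the specialization homomorphism.

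One small repair: $\overline{\mathrm{Frac}(W')}$ does not embed into $\mathbb{C}$ when $k$ is large. You should first descend $\calX_{\bar\eta}$ and a basis of its Picard group to a countable algebraically closed subfield, and embed that into $\mathbb{C}$.

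\textbf{The open case.} The step you leave open, supersingular surfaces in characteristic $2$, needs an actual argument rather than a generic appeal to ``a genus one fibration with positive Mordell--Weil rank''. Two things block the generic appeal:
\begin{itemize}
\item such surfaces can carry quasi-elliptic fibrations, whose Mordell--Weil groups are finite;
\item when the Artin invariant is $\sigma=10$, the lattice $\Pic(X)$ contains no copy of $U$, so there is no fibration with a section at all.
\end{itemize}
The paper splits along the Artin invariant:
\begin{itemize}
\item for $\sigma\le 9$ it uses an elliptic fibration with a section and \cite[Corollary 2.5]{hito23};
\item for $\sigma=10$ it takes an elliptic fibration without reducible fibres \cite[Theorem 4.8]{kondoshimadaduality}. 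By Shioda--Tate the Jacobian fibration has infinite Mordell--Weil group, and this group acts on $X$ through the torsor structure of $X_\eta$.
\end{itemize}
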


The proof of this proposition is a formal consequence of the following two results from \cite{lieblichmaulik}.

\begin{theorem}
\cite[Theorem 2.1]{lieblichmaulik} \label{lbthm 2.1}
        Let $\pi \colon \calX \to \Spec(R)$ be a smooth projective relative surface over a Henselian DVR with algebraically closed residue field $k$ and fraction field $K$, whose special fiber is not birationally ruled. Write \[\overline{\mathrm{sp}} \colon \Pic(\calX_{\bar{\eta}}) \to \Pic(\calX_{0})\] for the natural specialization map. If $\overline{\mathrm{sp}}(\mathcal{L})$ is ample for every ample invertible sheaf $\mathcal{L}$ on $\calX_{\Bar{\eta}}$, then there is a group homomorphism $$\bar{\sigma} \colon \Aut(\calX_{\Bar{\eta}}) \to \Aut(\calX_{0})$$ such that $\overline{\mathrm{sp}}$ is $\bar{\sigma}$-equivariant with respect to the natural pullback actions. In addition, if $H^{0}(\calX_0,T_{\calX_0})=0$, then $\bar{\sigma}$ is injective.
\end{theorem}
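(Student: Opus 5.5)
The plan is to reduce to the theorem of Matsusaka--Mumford on extending isomorphisms of polarized families, after passing to finite extensions of $R$. Let $g \in \Aut(\calX_{\bar{\eta}})$.

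\emph{Step 1: descend $g$ to a finite extension.} Since $\calX_{\bar{\eta}}$ is of finite type, $g$ and its inverse are defined over some finite extension $L$ of $K$. Let $R_L$ be the integral closure of $R$ in $L$. Because $R$ is Henselian, $R_L$ is again a Henselian DVR, and because $k$ is algebraically closed its residue field is still $k$. The base change $\calX_{R_L}$ therefore has the same special fiber $\calX_0$. Replacing $R$ by $R_L$, we may assume $g$ is an automorphism of $\calX_K$.

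\emph{Step 2: extend $g$ over $R$.} Choose an ample invertible sheaf $\mathcal{L}$ on $\calX_K$. By hypothesis both $\overline{\mathrm{sp}}(\mathcal{L})$ and $\overline{\mathrm{sp}}(g^*\mathcal{L})$ are ample. By Proposition~\ref{specializationmap} and its proof (via \cite[Th\'eor\`eme 4.7.1]{ega3}), the unique extensions $\overline{\mathcal{L}}$ and $\overline{g^*\mathcal{L}}$ to $\calX$ are relatively ample over $\Spec(R)$. We now have two relatively polarized smooth projective families, $(\calX,\overline{g^*\mathcal{L}})$ and $(\calX,\overline{\mathcal{L}})$. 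The generic fibers are identified by $g$ compatibly with the polarizations, and the special fiber is not birationally ruled. The Matsusaka--Mumford theorem then says that $g$ extends to an isomorphism $G\colon \calX \to \calX$ over $R$. This is the step I expect to carry the real content.

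If one wants to avoid quoting that theorem, one argues directly. Take the closure $\Gamma$ of the graph of $g$ in $\calX\times_R\calX$. Non-ruledness of $\calX_0$ forces the special fiber $\Gamma_0$ to contain exactly one component that is birational over both factors, so $g$ specializes to a birational self-map $g_0$ of $\calX_0$. Ampleness of $\overline{\mathrm{sp}}(g^*\mathcal{L}) = \Gamma_0^*\overline{\mathrm{sp}}(\mathcal{L})$ then rules out exceptional curves for $g_0$ and $g_0^{-1}$. The delicate part is this component analysis.

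\emph{Step 3: define $\bar\sigma$ and check its properties.} Put $\bar\sigma(g) \coloneqq G|_{\calX_0}$. The extension $G$ is unique because $\calX$ is separated and the generic fiber is dense. Hence $\bar\sigma(g)$ does not depend on the choice of $L$: for $L\subset L'$, the base change of the extension over $R_L$ is the extension over $R_{L'}$. Uniqueness also gives $(gh)$ extending to $GH$, so $\bar\sigma$ is a homomorphism. For equivariance, take $\mathcal{M}\in\Pic(\calX_{\bar{\eta}})$ defined over $L$ with extension $\overline{\mathcal{M}}$. Then $G^*\overline{\mathcal{M}}$ is the extension of $g^*\mathcal{M}$, and restricting to the special fiber gives $\overline{\mathrm{sp}}(g^*\mathcal{M}) = \bar\sigma(g)^*\overline{\mathrm{sp}}(\mathcal{M})$.

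\emph{Step 4: injectivity when $H^0(\calX_0,T_{\calX_0})=0$.} Consider the automorphism scheme $\Aut_{\calX/R}$. It is a group scheme, locally of finite type and separated over $R$. Its tangent space at the identity of the special fiber is $H^0(\calX_0,T_{\calX_0})=0$, so it is unramified along the identity section. Suppose $\bar\sigma(g)=\mathrm{id}$. Then $G$ and $\mathrm{id}$ are two sections of $\Aut_{\calX/R}$ over the connected base $\Spec(R_L)$ that agree at the closed point. A section of a separated, unramified morphism is an open immersion, so these two sections coincide. Therefore $G=\mathrm{id}$, hence $g=\mathrm{id}$, and $\bar\sigma$ is injective.
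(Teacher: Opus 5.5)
The paper does not prove this statement; it quotes it from Lieblich--Maulik, and your proposal is correct and follows essentially the argument given there. That argument passes to a finite extension (still a Henselian DVR with residue field $k$), uses the ampleness hypothesis to make the extensions of $\mathcal{L}$ and $g^*\mathcal{L}$ relatively ample so that Matsusaka--Mumford extends $g$ to an automorphism of $\calX$, obtains well-definedness, multiplicativity and equivariance from uniqueness of extensions, and deduces injectivity from the unramifiedness of $\Aut_{\calX/R}$ when $H^0(\calX_0,T_{\calX_0})=0$. Your optional graph-closure argument is only a sketch, but it is not needed once Matsusaka--Mumford is quoted.
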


\begin{proposition}\cite[Corollary 4.2]{lieblichmaulik} \label{lbcor 2.1}
        Any K3 surface $X$ of finite height over $k$ is the closed fiber of a smooth projective relative K3 surface $\mathcal{X} \to \Spec(W(k))$, where $W(k)$ is the ring of Witt vectors with residue field $k$, such that the restriction map $\Pic(\mathcal{X}) \to \Pic(X)$ is an isomorphism.
\end{proposition}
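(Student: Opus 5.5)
The plan is to build a single lift of $X$ over $W=W(k)$ to which every line bundle on $X$ extends, and then to algebraize it. Let $\Def_X$ be the formal deformation space of $X$ over $W$. It is formally smooth of relative dimension $20$, since $H^2(X,T_X)=0$ and $H^1(X,T_X)$ has dimension $20$. For a line bundle $L$ on $X$, the locus $\Def_{(X,L)}\subset \Def_X$ where $L$ extends is cut out by one equation, coming from the obstruction class in $H^2(X,\mathcal{O}_X)$. So it suffices to find a $W$-point of $\Def_X$ lying in $\Def_{(X,L)}$ for every $L$ in a basis of $\Pic(X)$.

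\textbf{Step 1 (crystalline criterion).} By the Deligne--Illusie/Ogus description of deformations, a lift $\calX$ of $X$ over $W$ corresponds to a lift $F^1\subset F^2$ of the Hodge filtration on the crystalline cohomology $H:=H^2_{\mathrm{cris}}(X/W)$. Here $F^2$ is an isotropic line and $F^1=(F^2)^\perp$. A class $L$ extends to $\calX$ if and only if its crystalline Chern class $c_1(L)\in H$ lies in $F^1$, equivalently is orthogonal to $F^2$. Strictly this holds for $p>2$; in characteristic $2$ one uses instead Ogus's refined crystalline deformation theory, as in Lieblich--Maulik. So the goal becomes: choose a lift of the Hodge line $F^2$ that is orthogonal to $c_1(\Pic(X))\otimes W$.

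\textbf{Step 2 (use finite height).} Let $h=h(X)<\infty$. The $F$-crystal $H$ then has slopes $1-1/h$, $1$ and $1+1/h$, with multiplicities $h$, $22-2h$ and $h$. By Katz's slope decomposition it splits canonically as
\[
H=H_{<1}\oplus H_{1}\oplus H_{>1},
\]
where $H_{<1}$ and $H_{>1}$ are isotropic and paired with each other. The Chern classes of line bundles are fixed by $\varphi/p$, so $c_1(\Pic(X))$ lies in the slope-$1$ part $H_1$. Nygaard's ordinary canonical lift, generalized by Nygaard--Ogus to finite height, amounts to choosing $F^2$ inside $H_{<1}$. Concretely, one takes $F^2$ to be the line in $H_{<1}$ that reduces to the Hodge line $H^0(\Omega^2_X)$; this line is compatible with the Hodge filtration because the part of slope $<1$ is what governs $H^2(X,\mathcal{O}_X)$ and $\widehat{\mathrm{Br}}_X$. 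Then $F^2\perp H_1\supset c_1(\Pic(X))$, so every line bundle extends.

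\textbf{Step 3 (conclude).} Some line bundle on $X$ is ample, and it extends, so Grothendieck's existence theorem algebraizes the formal lift to a projective scheme $\calX\to\Spec W$. The map $\Pic(\calX)\to\Pic(X)$ is surjective by Step 2. It is injective because the specialization map is injective and compatible with intersection forms, as noted in the remark following Proposition \ref{specializationmap}, and $\Pic(\calX)\cong\Pic(\calX_\eta)$.

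The main obstacle is Step 2. One has to check that the chosen $F^2$ in $H_{<1}$ really defines a $W$-point of $\Def_X$, i.e.\ that it lifts the Hodge filtration and is admissible, which rests on the Nygaard--Ogus theory of quasi-canonical lifts and on Ogus's crystalline Torelli-type statements. The characteristic $2$ case needs extra care there. I would first carry out the ordinary case via Serre--Tate-type canonical coordinates on $\Def_X\cong\widehat{\mathrm{Br}}_X\otimes(\text{étale part})$, and then treat higher finite height by the slope argument above.
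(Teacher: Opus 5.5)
\textbf{There is a genuine gap.} Your Step 2 chooses $F^2$ in the wrong slope piece, so as written it does not define a lift. By Mazur's theorem the Hodge line $H^0(X,\Omega^2_X)\subset H/pH$ is the image of $\{x\in H:\varphi(x)\in p^2H\}$. The Newton and Hodge polygons touch at the break points, so Katz's decomposition also splits the Hodge slopes:
\begin{itemize}
\item $H_{<1}$ has Hodge slopes $0,1,\dots,1$;
\item $H_1$ has only Hodge slope $1$;
\item $H_{>1}$ has Hodge slopes $1,\dots,1,2$.
\end{itemize}
Hence $\varphi^{-1}(p^2H)\subseteq pH_{<1}\oplus pH_1\oplus H_{>1}$. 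So the Hodge line lies in $H_{>1}/p$ and meets $H_{<1}/p$ trivially, and no line in $H_{<1}$ reduces to it. Your justification actually points the other way. That $H_{<1}$ governs $H^2(X,\mathcal{O}_X)=H/F^1$ concerns the opposite end of the Hodge filtration. For $h=1$, Nygaard's canonical lift has $F^2$ equal to the slope-$2$ part, while the unit-root part is a complement of $F^1$. The repair is easy: take for $F^2$ any rank-one direct summand of $H_{>1}$ lifting the Hodge line. Since $H_{>1}$ is isotropic and orthogonal to $H_1$, this $F^2$ is isotropic and $(F^2)^\perp\supseteq H_1\ni c_1(L)$.

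Even after this repair, Step 1 does not cover $p=2$. Step 1 identifies $W$-lifts with isotropic lifts of the Hodge line and uses the criterion $c_1(L)\in F^1$. In its standard form this needs the divided powers on $pW$ to be topologically nilpotent, i.e.\ $p\geq 3$. The Lieblich--Maulik argument behind Theorem \ref{prorep} does not use crystalline deformation theory, so invoking it does not settle $p=2$. The statement is needed there, in Proposition \ref{forwardimplication}.

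The missing idea is already present in your first paragraph:
\begin{itemize}
\item Write $\Def_X=\Spf W[[t_1,\dots,t_{20}]]$. Let $L_1,\dots,L_\rho$ be a basis of $\Pic(X)$ and $f_1,\dots,f_\rho$ Deligne's single equations for the $\Def_{(X,L_i)}$.
\item The tangent space of $\Def_{(X,L_1,\dots,L_\rho)}\otimes_W k$ is the common kernel of the maps $H^1(X,T_X)\to H^2(X,\mathcal{O}_X)$, $v\mapsto v\cup c_1^{\mathrm{Hodge}}(L_i)$.
\item The pairing $H^1(X,T_X)\times H^1(X,\Omega^1_X)\to H^2(X,\mathcal{O}_X)$ is perfect.
\item Finite height enters exactly through the injectivity of $c_1^{\mathrm{Hodge}}$ on $\Pic(X)\otimes k$, the same input used in Lemma \ref{tangentunion}. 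Hence this tangent space has dimension $20-\rho$.
\end{itemize}
Therefore $p,f_1,\dots,f_\rho$ are part of a regular system of parameters of $W[[t_1,\dots,t_{20}]]$. So $\Def_{(X,L_1,\dots,L_\rho)}\cong\Spf W[[s_1,\dots,s_{20-\rho}]]$ is formally smooth over $W$. Any $W$-point, algebraized as in your Step 3, gives the required lift. This is the Lieblich--Maulik route. It is uniform in $p$ and needs no crystalline Torelli, slope decomposition or quasi-canonical lifts.
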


Before proceeding with the proof, let us observe the following:

\begin{lemma}\label{splemma}
    Let $\pi \colon \calX \to \Spec(R)$ be a K3 surface over a Henselian DVR $R$ with residue field $k$. If the restriction map $\Pic(\calX) \to \Pic(\calX_0)$ is an isomorphism, then the specialization map $\overline{\mathrm{sp}} \colon \Pic(\calX_{\bar{\eta}})\to \Pic(\calX_0)$ is an isomorphism.
\end{lemma}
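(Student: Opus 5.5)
We want to show that $\overline{\mathrm{sp}}$ is an isomorphism. By the remark after Proposition \ref{specializationmap}, $\overline{\mathrm{sp}}$ is already injective and compatible with intersection forms, so only surjectivity is needed. The plan is to factor $\overline{\mathrm{sp}}$ through $\Pic(\calX)$ and use the hypothesis that $\Pic(\calX)\to\Pic(\calX_0)$ is an isomorphism.

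First, note that the residue field $k$ is algebraically closed, so the geometric special fiber is just $\calX_0$. Moreover, any finite extension $L/K$ inside $\overline{K}$ gives a DVR $R'_L$ whose residue field is still $k$.

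By construction in the proof of Proposition \ref{specializationmap}, $\overline{\mathrm{sp}}$ is the direct limit over $L$ of the maps
\[\Pic(\calX_{L})\xleftarrow{\sim}\Pic(\calX_{R'_L})\to\Pic(\calX_0),\]
where the left arrow is the isomorphism $(*)$. For $L=K$ this composite is the restriction $\Pic(\calX)\to\Pic(\calX_0)$, precomposed with the inverse of $\Pic(\calX)\xrightarrow{\sim}\Pic(\calX_\eta)$. Hence we get a commutative square
\[\Pic(\calX)\to\Pic(\calX_\eta)\to\Pic(\calX_{\bar\eta})\xrightarrow{\overline{\mathrm{sp}}}\Pic(\calX_0),\]
whose total composite is the restriction map $\Pic(\calX)\to\Pic(\calX_0)$. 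I would check this compatibility directly from the definition of the direct limit: the $L=K$ term maps into the limit via pullback.

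Since the restriction map is surjective by hypothesis, every class in $\Pic(\calX_0)$ lies in the image of $\overline{\mathrm{sp}}$. Thus $\overline{\mathrm{sp}}$ is surjective, and combined with injectivity it is an isomorphism.

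The only delicate point is the injectivity of $\overline{\mathrm{sp}}$, which the paper provides in the remark. If a self-contained argument is wanted, take $\mathcal{L}$ on $\calX_{R'_L}$ with trivial restriction to $\calX_0$. Its self-intersection and its intersections with other classes are locally constant in flat families, so $\mathcal{L}_{\bar\eta}$ is numerically trivial. Since $\Pic=\mathrm{Num}$ for K3 surfaces, $\mathcal{L}_{\bar\eta}$ is trivial. The Henselian hypothesis is not needed beyond what is used in the construction of the specialization map.
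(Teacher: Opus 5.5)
Your proof is correct and is essentially the paper's argument: the restriction isomorphism $\Pic(\calX)\to\Pic(\calX_0)$ factors as $\Pic(\calX)\xrightarrow{\sim}\Pic(\calX_\eta)\to\Pic(\calX_{\bar\eta})\xrightarrow{\overline{\mathrm{sp}}}\Pic(\calX_0)$, which gives surjectivity of $\overline{\mathrm{sp}}$. The one difference is that you pair this with injectivity of $\overline{\mathrm{sp}}$ itself (the Remark after Proposition~\ref{specializationmap}), whereas the paper cites injectivity of the pullback $\Pic(\calX_\eta)\to\Pic(\calX_{\bar\eta})$. Yours is the injectivity the deduction actually requires, so your write-up is, if anything, the more precise one.
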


\begin{proof}
By the definition of the specialization maps as in the proof of Proposition \ref{specializationmap} we have the following commutative diagram:
\[\begin{tikzcd}[sep=small]
	{\Pic(\calX_{\eta})} \\
	& {\Pic(\calX_{\bar{\eta}})} \\
	{\Pic(\calX_0)}
	\arrow[from=1-1, to=2-2]
	\arrow["\mathrm{sp}", from=1-1, to=3-1]
	\arrow["\overline{\mathrm{sp}}"', from=2-2, to=3-1]
\end{tikzcd}\]
        where the top right map is given by pullback, which is injective because $\mathbf{Pic}_{\calX \slash R}$ is an fppf sheaf. Since $\mathrm{sp}$ is the composition of $\Pic(\calX_{\eta})\cong \Pic(\calX)$ with the restriction map $\Pic(\calX)\xrightarrow{\sim} \Pic(\calX_0)$, it is an isomorphism and hence $\overline{\mathrm{sp}}$ is too.
\end{proof}

In order to apply Proposition \ref{lbcor 2.1} and Theorem \ref{lbthm 2.1} we need to know that K3 surfaces with finite automorphism group are of finite height. Although this is certainly known to experts, we include a short proof here for lack of a better reference.

\begin{proposition}\label{supersingularauto}
    Let $X$ be a supersingular K3 surface over $k$. Then $\Aut(X)$ is infinite.
\end{proposition}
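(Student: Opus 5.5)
Supersingular means $\rho(X)=22$, so $\Pic(X)$ is an even hyperbolic lattice of rank $22$. The plan is to exhibit infinitely many automorphisms geometrically, using an elliptic fibration with a section of infinite order. This avoids Theorem \ref{observationp}, which is unavailable for supersingular surfaces in characteristic $2$.

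\textbf{Step 1: an elliptic fibration with a section.} Any even indefinite lattice of rank $\geq 5$ represents $0$ (Meyer), so $\Pic(X)$ contains a primitive isotropic vector. Applying reflections in $-2$-classes and $\pm 1$, we may assume it is nef. On a K3 surface a primitive nef isotropic class $f$ is the class of a genus one fibration $X\to\mathbb{P}^1$; in characteristic $2,3$ this may be quasi-elliptic, which we must keep in mind. To obtain a section, we want $U\subset \Pic(X)$ as a direct summand. For a supersingular lattice of rank $22$ and Artin invariant $\sigma$, the discriminant group is $(\mathbb{Z}/p)^{2\sigma}$ with $2\sigma\le 20<22$. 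By Nikulin-type results (rank exceeds the length of the discriminant group plus $2$), $\Pic(X)\cong U\oplus L$. Taking the isotropic vector inside $U$ and making it nef via the Weyl group (which preserves the orthogonal decomposition up to isometry) gives a fibration $\pi$ with a section $O$.

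\textbf{Step 2: infinite Mordell--Weil group.} By Shioda--Tate, $\rk MW(\pi)=\rho-2-\sum_v (m_v-1)=20-\rk R$, where $R$ is the root lattice generated by fiber components not meeting $O$. If $\pi$ is elliptic, $\rk R\le 18$: the Euler characteristic bound $\sum e(F_v)=24$ gives $\rk R\le 18$ as for complex elliptic K3s, and wild ramification only increases $e(F_v)$ relative to $m_v-1$. So $\rk MW\ge 2>0$. A section $P$ of infinite order gives the translation automorphism $t_P$ of infinite order: it acts on $\Pic(X)$ sending $O\mapsto P$, and the classes of $nP$ are pairwise distinct.

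\textbf{Main obstacle: the quasi-elliptic case.} In characteristic $2$ or $3$ the fibration may be quasi-elliptic, and then the Mordell--Weil group is $p$-torsion, hence finite. To handle this, I would first try to choose the isotropic vector so that the fibration is elliptic. Quasi-elliptic fibrations have a root lattice of rank exactly $20$ among fiber components, while $\Pic(X)$ contains many copies of $U$ giving distinct fibrations, so an elliptic one should exist; otherwise I would argue through the finiteness of fibrations modulo $\Aut$. As a fallback, I would use the lattice-theoretic statement that $\mathrm{O}(\Pic(X))/W$ is infinite for any even hyperbolic lattice of rank $22$, since Nikulin--Vinberg show that finiteness forces $\rho\le 19$. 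I would then combine this with the crystalline Torelli theorem of Ogus (and Bragg--Lieblich in characteristic $2$): isometries preserving the ample cone and the period come from automorphisms. The stabilizer of the period is of finite index, because the period lives in a finite-dimensional $\mathbb{F}_p$-space $\Pic(X)^\vee/\Pic(X)$. Hence $\Aut(X)$ is infinite.
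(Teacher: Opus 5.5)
Your main line (translation by a section of infinite order) fails at two concrete points.

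\textbf{First}, $\Pic(X)\cong U\oplus L$ is false when the Artin invariant is $\sigma=10$. Nikulin's criterion needs $\rk N\geq 3+l(A_N)$, i.e.\ $22\geq 3+2\sigma$, so it only covers $\sigma\leq 9$; even by your own phrasing, $22$ does not exceed $20+2$. The conclusion itself also fails. If $N=U\oplus L$, then $L$ is negative definite of rank $20$ with $L^\vee/L\cong(\mathbb{Z}/p)^{20}$, so $L^\vee=\tfrac1p L$ and $L(1/p)$ is unimodular. For odd $p$ it is moreover even, and no even unimodular definite lattice of rank $20$ exists. (For $p=2$ one uses that the discriminant form of $N_{2,\sigma}$ is $\mathbb{Z}/2\mathbb{Z}$-valued, which again makes $L(1/2)$ even.) A fiber class and a section always span a copy of $U$, so these surfaces carry no genus one fibration with a section at all. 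This is why the paper, for $\sigma=10$ in characteristic $2$, takes a fibration without reducible fibers, passes to its Jacobian (infinite Mordell--Weil group by Shioda--Tate), and lets that group act on the torsor $X_\eta$.

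\textbf{Second}, the bound $\rk R\leq 18$ is false. The Euler characteristic only gives $\rk R=24-\sum_v\delta_v-\#\{\text{multiplicative fibers}\}-2\,\#\{\text{additive fibers}\}$. Over $\mathbb{C}$ the value $18$ comes from $\rho\leq 20$, which is exactly what fails for supersingular surfaces. For example, in characteristic $2$ take the Frobenius pullback $\lambda=\mu^2$ of the Hesse pencil $x^3+y^3+z^3=\lambda xyz$. It is an elliptic (not quasi-elliptic) K3 surface with sections and four fibers of type $I_6$, so $\rk R=20$ and the Mordell--Weil group is finite. Together with the quasi-elliptic case, which you only treat heuristically, the fibration argument does not prove the statement.

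What does work is your fallback, and in odd characteristic it is essentially the paper's proof. The paper combines Theorem \ref{observationp} (finite cokernel of $\Aut(X)\to\mathrm{O}(\Pic(X))/W$) with the absence of lattices of rank $\geq 20$ in Nikulin's list. Your finite-index step should be phrased as follows:
\begin{itemize}
\item $\mathrm{O}(N)$ acts on periods through the finite group $\mathrm{O}(N^\vee/N)$, so the kernel of this action has finite index in $\mathrm{O}(N)$.
\item The kernel contains $W$, because reflections in $(-2)$-vectors act trivially on $N^\vee/N$. Hence the image of the kernel in $\mathrm{O}(N)/W$ is still infinite.
\item After composing with $\pm 1$ and an element of $W$, each element of the kernel preserves the ample cone and the period, and the Torelli theorem realizes it by an automorphism.
\end{itemize}
Make this the argument for all $p\geq 3$ and all $\sigma$, and drop the fibrations there. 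In characteristic $2$ it requires a crystalline Torelli theorem for supersingular K3 surfaces in characteristic $2$, which you would have to cite precisely. The paper avoids that input: it uses an elliptic fibration with a section and \cite[Corollary 2.5]{hito23} for $\sigma\leq 9$, and the Kond\=o--Shimada fibration for $\sigma=10$.
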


\begin{proof}
    If $\mathrm{char}(k)$ is odd, then the natural map $\Aut(X) \to \mathrm{O}(\Pic(X))/W$ has finite cokernel by Theorem \ref{observationp}. This proves the claim, since $\mathrm{O}(\Pic(X))/W$ is infinite, as Nikulin's list of even, hyperbolic lattices $\Lambda$ with $\mathrm{O}(\Lambda)/W$ finite does not contain any lattices of rank at least $20$.

    Assume now that $\mathrm{char}(k)=2$. If the Artin invariant of $X$ satisfies $\sigma \leq 9$, then $X$ admits an elliptic fibration with a section and the claim follows from \cite[Corollary 2.5]{hito23}. If $\sigma=10$, then $X$ admits an elliptic fibration with no reducible singular fibers \cite[Theorem 4.8]{kondoshimadaduality}. The associated Jacobian fibration $J(X)$ has isomorphic fibers, hence by the Shioda--Tate formula its Mordell--Weil group is infinite. As $X_{\eta}$ is a $J(X_{\eta})$-torsor, this implies the claim.
\end{proof}

\begin{proof}[Proof of Proposition \ref{forwardimplication}]
        Since $X$ has finite automorphism group, it is of finite height by Proposition \ref{supersingularauto}. Hence, by Proposition \ref{lbcor 2.1} there exists a K3 surface $\calX \to \Spec(W(k))$ such that $\calX_0\cong X$ and $\Pic(\calX) \to \Pic(X)$ is an isomorphism. Lemma \ref{splemma} implies that $\overline{\mathrm{sp}}$ is an isomorphism, and by \cite[Corollary 2.4]{lieblichmaulik} $\overline{\mathrm{sp}}$ preserves the ample cones. Since K3 surfaces have no global vector fields (\cite[Thm. 7]{rudakov}), $H^0(X,T_{X})=0$, so $\calX \to \Spec(W(k))$ satisfies the assumptions of Theorem \ref{lbthm 2.1}. In particular there exists an injective map $\bar{\sigma} \colon \Aut(\calX_{\bar{\eta}}) \to \Aut(X)$, so $\calX_{\bar{\eta}}$ is a K3 surface over an algebraically closed field of characteristic 0 with finite automorphism group. By the classification in characteristic 0, $\Pic(\calX_{\bar{\eta}})$ is then one of the lattices in Table \ref{table1}.
\end{proof}

\section{Deformations of lattice-primitive K3 surfaces}\label{keyresult-section}

In this section, we establish the key tool to prove the existence part of Theorem \ref{mainthm}. In order to state the result, we need to define the notion of a lattice quasipolarization.

\begin{definition}
    A $\Lambda$-primitive K3 surface is a pair $(X,j)$, where $X$ is a K3 surface and $j \colon \Lambda \hookrightarrow \Pic(X)$ is a primitive lattice embedding. If, moreover, $j(\Lambda)$ contains a big and nef line bundle, then we say that $(X,j)$ is $\Lambda$-quasipolarized.
\end{definition}

\begin{definition}
    A $\Lambda$-primitive K3 space is a pair $(\calX \slash S,\mathcal{J})$, where $\calX \to S$ is a K3 space over a scheme $S$, and $\mathcal{J}\colon \Lambda \to \mathbf{Pic}_{\calX \slash S}(S)$ is a group homomorphism such that for all geometric points $\overline{s} \to S$, the induced map $\mathcal{J}_{\overline{s}}\colon \Lambda \to \Pic(\calX_{\overline{s}})$ is a primitive lattice embedding. If, moreover, there exists a $\lambda \in \Lambda$ such that $\mathcal{J}_{\overline{s}}(\lambda)$ is big and nef for every geometric point $\overline{s} \to S$, then we say that $(\calX \slash S, \mathcal{J})$ is $\Lambda$-quasipolarized.
\end{definition}

We can now state the main result of this section.

\begin{proposition}\label{main result}
    Let $(X,j)$ be an ordinary $\Lambda$-quasipolarized K3 surface over $\bFFp$. Then there exists a Henselian DVR $A$ over $\bFFp$, with residue field $\bFFp$, whose fraction field satisfies $\overline{\mathrm{Frac}(A)} \cong \overline{\mathbb{F}_p(t)}$ and a K3 space $\mathcal{X}$ over $\Spec(A)$ such that $\mathcal{X}_0\cong X$ and $\Pic(\mathcal{X}_{\overline{\eta}})\cong\Lambda$.
\end{proposition}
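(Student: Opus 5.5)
We work with the deformation space of the pair $(X,j)$ and show that it is a smooth formal scheme of dimension $20-\rk\Lambda$ over $\bFFp$. Inside it, the locus where a given additional class stays algebraic is a proper closed formal subscheme. A suitably general one-parameter curve through the origin then has geometric generic fiber with Picard lattice exactly $\Lambda$.

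\textbf{Step 1 (the deformation space).} Let $\Def_X$ be the formal deformation space of $X$ over $\bFFp$. It is formally smooth of dimension $20$, since $H^2(X,T_X)=H^0(X,\Omega_X)^\vee=0$ and $H^1(X,T_X)$ has dimension $20$. For each $L\in j(\Lambda)$, the locus $\Def_{(X,L)}$ where $L$ extends is cut out by one equation. Because $X$ is ordinary, the obstruction class of $L$ under Frobenius (Deligne--Illusie, Nygaard) does not vanish, so this locus is a formally smooth divisor. Choose a basis $L_1,\dots,L_r$ of $j(\Lambda)$, with $r=\rk\Lambda$. I will argue that the divisors $\Def_{(X,L_i)}$ meet transversally. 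Their tangent hyperplanes are the kernels of the maps $H^1(T_X)\to H^2(\mathcal{O}_X)$ given by cup product with $c_1^{dR}(L_i)\in H^1(\Omega_X)$. For an ordinary K3 surface these classes are linearly independent in $H^1(\Omega_X)$, because $\Pic(X)\otimes k\to H^2_{dR}$ is injective in the ordinary case via the canonical lift. Hence $\Def_{(X,j)}=\bigcap_i\Def_{(X,L_i)}$ is formally smooth of dimension $20-r\ge 1$, since $r\le 19$ by Proposition \ref{heightinequality}.

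\textbf{Step 2 (algebraization).} The class $\lambda$ that is big and nef on the closed fiber extends over $\Def_{(X,j)}$. Using this extended class together with Artin approximation, I algebraize to an algebraic space over the Henselization of a smooth $\bFFp$-algebra of dimension $20-r$. A big and nef class does not give ampleness, which is why the conclusion speaks of a K3 \emph{space}.

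\textbf{Step 3 (cutting down to a curve).} For every class $v$ in $H^2$ that does not lie in the saturation of $j(\Lambda)$, the locus where $v$ becomes algebraic is a proper closed formal subscheme of $\Def_{(X,j)}$. It is proper because $\Def_{(X,j)}$ contains the canonical direction (the ordinary structure), along which only the Picard rank of the fiber can shrink, and because the extra divisors are again transversal. There are only countably many such $v$. Over the countable field $\bFFp$, however, one cannot simply choose a very general point. Instead I choose a formal curve $\Spf A\subset\Def_{(X,j)}$ passing through the origin that is not contained in any of these countably many proper closed subschemes, and then take $A$ to be the Henselization of its algebraization. This is possible because a power series arc can be built inductively to avoid countably many proper closed subsets, diagonalizing order by order. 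Then $\mathrm{Frac}(A)$ has transcendence degree $1$ over $\bFFp$, so $\overline{\mathrm{Frac}(A)}\cong\overline{\FFp(t)}$, provided the arc is algebraic.

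\textbf{Step 4 (conclusion).} By the Remark following Proposition \ref{specializationmap}, the specialization map $\overline{\mathrm{sp}}\colon\Pic(\calX_{\bar\eta})\to\Pic(X)$ is an injective isometry. Its image contains $j(\Lambda)$, and it contains no class outside the saturation of $j(\Lambda)$, since any such class would force the arc into one of the avoided loci. Because $j$ is primitive, $\Pic(\calX_{\bar\eta})\cong\Lambda$.

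The main obstacle is the tension between Steps 3 and 4: the arc must be algebraic so that the fraction field is $\FFp(t)$ up to finite extension, yet it must also avoid countably many loci. An arc that is algebraic over $\bFFp(t)$ cannot be transcendental enough to avoid everything automatically. My first attempt is to take $A$ to be the Henselization at the origin of a general algebraic curve in the algebraization. For such a curve, generic-fiber classes specialize injectively into the fixed finitely generated lattice $\Pic(X)$, so only the finitely many overlattices between $j(\Lambda)$ and $\Pic(X)$ of bounded rank matter. Each of the corresponding loci is a proper closed subset, so a general curve avoids all of them, and this finiteness removes the countability issue.
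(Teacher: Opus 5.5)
Your last paragraph rests on a false finiteness claim: it is not true that only finitely many lattices lie between $j(\Lambda)$ and $\Pic(X)$. Since $j$ is primitive, $\Pic(X)/j(\Lambda)$ is free. As soon as its rank is at least $2$, the saturations of $j(\Lambda)+\mathbb{Z}v$ for pairwise non-proportional $v$ give infinitely many primitive intermediate lattices $\Lambda'$, each with its own closed locus $\Spf(D')\subset\Spf(D)$. This happens in the application, where $\rho(X)=20$ while $\rk\Lambda$ can be $1$. So ``a general algebraic curve avoids all of them'' is exactly the countable-avoidance problem you flagged in Step 3. Genericity cannot handle this over the countable field $\bFFp$, so Steps 3--4 do not produce an $A$ with $\overline{\mathrm{Frac}(A)}\cong\overline{\FFp(t)}$.

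The finiteness that does hold is at first order. The tangent space $T_{D',m'}$ is determined by $c_1^{\mathrm{Hodge}}(\Lambda'\otimes k)\subseteq H^1(X,\Omega^1_X)$, which depends only on the image of $\Lambda'$ in $\Pic(X)\otimes\FFp$. (The paper gets this by descending $X$ and $\Pic(X)$ to some $\FFpn$.) Hence only finitely many proper subspaces $T_{D',m'}\subsetneq T_{D,m}$ occur, and since $\bFFp$ is infinite you can choose a tangent vector outside all of them. The paper then:
\begin{enumerate}
\item takes a DVR quotient $D\to R\cong\bFFp[[t]]$ with that tangent line;
\item algebraizes over $\hat A$ using the quasipolarization;
\item applies Artin approximation over the Henselization $A$ of $\bFFp[t]_{(t)}$ only modulo $m_A^2$, which suffices because only the tangent direction is controlled.
\end{enumerate}

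Controlling only the tangent direction requires a second ingredient that your proposal lacks, and this is where ordinarity is really used. In your Step 1 finite height would suffice, and the ``canonical direction'' remark in Step 3 is not an argument. A priori the image of $\Pic(\calX_{\bar\eta})$ in $\Pic(X)$ could be a non-primitive lattice such as $j(\Lambda)+\mathbb{Z}\,pw$. The locus where $pw$ deforms has full tangent space $T_{D,m}$, because $c_1^{\mathrm{Hodge}}(pw)=p\,c_1^{\mathrm{Hodge}}(w)=0$, so no choice of tangent direction excludes it.

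The paper rules this out with Lemma \ref{tfreelemma}: for a K3 space over a complete equicharacteristic DVR with ordinary special fiber, the specialization map has torsion-free cokernel. The prime-to-$p$ part is formal. The $p$-part uses a Frobenius comparison of obstruction classes and the bijectivity of Frobenius on $H^2(X,\mathcal{O}_X)$. The lemma is applied over the completion $\hat{A'}$ of a finite extension where all of $\Pic(\calX_{\bar\eta})$ is defined. It shows the image is a primitive intermediate $\Lambda'$, so $D\to\hat{A'}$ factors through $D'$. Injectivity of $\hat A/m_{\hat A}^2\to\hat{A'}/m_{\hat{A'}}^{2s}$ then forces $D\to\hat A/m_{\hat A}^2$ to factor through $D'$, contradicting the choice of tangent direction. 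Without both the first-order finiteness and this torsion-freeness lemma, your outline does not close.
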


We begin by recalling the deformation theory of lattice-primitive K3 surfaces, which will be an essential ingredient in the proof.

For $(X,j)$ a $\Lambda$-primitive K3 surface, consider the deformation functor \[\mathrm{Def}_{(X,\Lambda,j)} \colon \mathrm{Art}_k  \to (\mathrm{Set})\] parametrizing isomorphism classes of triples $(\calX, J, \phi)$, where $\calX \to \Spec(A)$ is a relative K3 surface, $\phi\colon \calX_0 \cong X$ is an isomorphism, and $J \colon \Lambda \hookrightarrow \Pic(\calX)$ is such that $J_0= \phi^* \circ j$. 

Observe that if one chooses a basis $L_1$, \dots,$L_{\mathrm{rk}\, \Lambda}$ of $\Lambda$, then $\Def_{(X,\Lambda,j)}$ is identified with $\Def_{(X,j(L_1),\dots, j(L_{\rk \Lambda}))}$, the functor which parametrizes deformations of $X$ together with the line bundles $j(L_1), \dots, j(L_{\rk \, \Lambda})$. This can be used to show the following.

\begin{theorem}\cite[Proposition 4.1, Lemma 4.3]{lieblichmaulik} \label{prorep}
    Let $(X,j)$ be a $\Lambda$-primitive K3 surface of finite height over $k$. The deformation functor $\mathrm{Def}_{(X,\Lambda,j)}$ is prorepresented by a formally smooth, local, complete $k$-algebra $D$ of dimension $20-\mathrm{rk}\,\Lambda$ and residue field $k$.
\end{theorem}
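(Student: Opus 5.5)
The plan is to realize $\Def_{(X,\Lambda,j)}$ as a closed formal subscheme of the (smooth) universal deformation space of $X$, cut out by $\rk\,\Lambda$ equations, and then show these equations have linearly independent differentials.

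\textbf{Step 1: deformations of $X$.} Since $H^0(X,T_X)=0$ (\cite[Thm. 7]{rudakov}), $H^2(X,T_X)\cong H^0(X,\Omega^1_X)^\vee=0$ and $\dim H^1(X,T_X)=20$, Schlessinger's criteria give that $\Def_X$ is prorepresented by a formal power series ring $k[[t_1,\dots,t_{20}]]$.

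\textbf{Step 2: equations for the line bundles.} As observed before the statement, after choosing a basis $L_1,\dots,L_r$ of $\Lambda$ ($r=\rk\,\Lambda$) we have
\[
\Def_{(X,\Lambda,j)}=\Def_{(X,j(L_1))}\times_{\Def_X}\cdots\times_{\Def_X}\Def_{(X,j(L_r))}.
\]
Because $H^1(X,\mathcal{O}_X)=0$, a line bundle deforms in at most one way along a given deformation of $X$. Hence each forgetful map $\Def_{(X,L)}\to\Def_X$ is a monomorphism. The obstruction to lifting $L$ across a small extension lies in $H^2(X,\mathcal{O}_X)\cong k$, which is one-dimensional. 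The standard argument then shows that $\Def_{(X,L)}$ is the closed subfunctor cut out by a single equation $f_L\in k[[t_1,\dots,t_{20}]]$. Consequently $\Def_{(X,\Lambda,j)}$ is prorepresented by
\[
D=k[[t_1,\dots,t_{20}]]/(f_{L_1},\dots,f_{L_r}).
\]
It therefore suffices to show that the linear parts of the $f_{L_i}$ are linearly independent; $D$ is then a power series ring in $20-r$ variables.

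\textbf{Step 3: the tangent computation.} The first-order obstruction to deforming $L$ along $v\in H^1(X,T_X)$ is the contraction $v\cdot c_1^{dR}(L)\in H^2(X,\mathcal{O}_X)$. Here $c_1^{dR}(L)=d\log(L)\in H^1(X,\Omega^1_X)$. Since $\Omega^1_X\cong T_X^\vee$ and $\omega_X\cong\mathcal{O}_X$, the pairing
\[
H^1(X,T_X)\times H^1(X,\Omega^1_X)\to H^2(X,\mathcal{O}_X)
\]
is perfect by Serre duality. So the linear part of $f_{L_i}$ is the functional given by $c_1^{dR}(L_i)$. Linear independence of the $f_{L_i}$ is thus equivalent to injectivity of
\[
\Lambda\otimes k\xrightarrow{j}\Pic(X)\otimes k\xrightarrow{c_1^{dR}} H^1(X,\Omega^1_X).
\]
The first map is injective because $j$ is a primitive embedding, so $\Lambda/p\hookrightarrow\Pic(X)/p$.

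\textbf{Step 4 (main obstacle).} The hard point is injectivity of $c_1^{dR}\colon \Pic(X)\otimes k\to H^1(X,\Omega^1_X)$, and this is where finite height is essential; it fails for supersingular $X$.

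I would first show that $\Pic(X)/p\to H^1(X,Z\Omega^1_X)$ is injective. This uses the exact sequence $0\to\mathcal{O}_X^{*}\xrightarrow{F}\mathcal{O}_X^*\xrightarrow{d\log}Z\Omega^1_X$ together with $H^1(X,\mathcal{O}_X)=0$ and the vanishing of global $1$-forms.

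I would then control the kernel of $H^1(Z\Omega^1)\to H^1(\Omega^1)$, which comes from $H^0(B\Omega^2)$, i.e.\ from the action of Frobenius on $H^2(X,\mathcal{O}_X)$. For finite height, the image of $\Pic(X)\otimes k$ in $H^2_{dR}$ meets the relevant piece of the conjugate filtration trivially. Concretely, in crystalline terms the Chern classes lie in the slope-$1$ part of $H^2_{cris}$, while the kernel sits in the slope $<1$ part, which has height $h<\infty$. This is Ogus's/Deligne's result on the Hodge and conjugate filtrations of non-supersingular K3 surfaces, which I would invoke (or reprove via the Deligne--Illusie decomposition) to conclude.
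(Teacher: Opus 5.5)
Your proposal is correct and is essentially the argument the paper takes from \cite[Proposition 4.1, Lemma 4.3]{lieblichmaulik}. In that argument $\Def_X$ is a $20$-dimensional power series ring, each $j(L_i)$ cuts out a single equation whose linear part is Serre dual to its Hodge Chern class, so the dual tangent space of $D$ is the cokernel of $c_1^{\mathrm{Hodge}}\circ(j\otimes k)$, exactly as used in the proof of Lemma~\ref{tangentunion}. Everything then reduces to injectivity of $c_1^{\mathrm{Hodge}}$ on $\Pic(X)\otimes k$ for finite height, which the paper cites from \cite[Proposition 10.3]{katsuravandergeerstratifitcation} at the point where you propose to invoke Ogus/Deligne.

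One small slip in your Step 4 heuristic: the subspace to be avoided, $F^2=H^0(X,\Omega^2_X)\supseteq H^0(X,B\Omega^2_X)$, comes from the slope $>1$ part of $H^2_{\mathrm{cris}}(X/W(k))$, not the slope $<1$ part, which is the one accounting for $H^2(X,\mathcal{O}_X)$. The real point, that this subspace is disjoint from the slope-$1$ part containing the Chern classes, is unaffected.
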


\begin{setup}
    Let us explain our setup and fix some notation. For $(X,j)$ a $\Lambda$-primitive K3 surface of finite height over $\bFFp$, $D$ will denote the ring prorepresenting $\mathrm{Def}_{(X,\Lambda,j)}$ and $m$ its maximal ideal. The proof of Proposition \ref{main result} is immediate if $\mathrm{rk}\, \Lambda=\mathrm{rk}\, \Pic(X)$, so for the rest of this section we assume that $\mathrm{rk}\, \Lambda<\mathrm{rk}\, \Pic(X)$. We call a lattice $\Lambda'$ \emph{intermediate} if $j(\Lambda) \subsetneq \Lambda' \subseteq \Pic(X)$ and each embedding is primitive. By definition $X$ is then $\Lambda'$-primitive for any intermediate $\Lambda'$ and we denote by $D'$ the ring prorepresenting $\Def_{(X,\Lambda')}$ and $m'$ its maximal ideal. Since the $\Lambda'$-primitive structure of $X$ is compatible with $j$, there is a natural forgetful transformation $\mathrm{Def}_{(X,\Lambda')}\to \mathrm{Def}_{(X,\Lambda,j)}$, which by \cite[Proposition 1.5]{delignerel} is injective. In particular the associated adic ring map $D\to D'$ is surjective and we can regard the formal deformation space $\Spf(D')$ as a formal closed subscheme of $\Spf(D)$. 
\end{setup}

The strategy to prove Proposition \ref{main result} is to deform $X$ inside $\Spf(D)$ and away from all the $\Spf(D')$ as $\Lambda'$ ranges over all intermediate lattices. For this, we show that there exists a tangent vector pointing away from these loci.

\begin{lemma}\label{tangentunion}
    Let $(X,j)$ be a $\Lambda$-primitive K3 surface of finite height over $\bFFp$. Then \[\bigcup_{\Lambda'} T_{D',m'} \subsetneq T_{D,m},\] where the union is over all intermediate lattices $\Lambda'$.
\end{lemma}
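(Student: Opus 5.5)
Each $T_{D',m'}$ is a linear subspace of $T_{D,m}$ of codimension at least $1$. The plan is to bound these codimensions precisely and then use a counting argument over the finite field $\FFp$, which is where working over $\bFFp$ enters.

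\emph{Codimension.} By Theorem \ref{prorep}, $D$ and $D'$ are formally smooth of dimensions $20-\rk\Lambda$ and $20-\rk\Lambda'$, so $T_{D,m}$ and $T_{D',m'}$ are $\bFFp$-vector spaces of these dimensions. Since $D\to D'$ is surjective, the induced map on tangent spaces $T_{D',m'}\to T_{D,m}$ is injective. Its image is a subspace of codimension $\rk\Lambda'-\rk\Lambda\geq 1$, because $j(\Lambda)\subsetneq\Lambda'$ with $\Lambda'$ primitive forces $\rk\Lambda'>\rk\Lambda$. Every $\Lambda'$ contains some intermediate $\Lambda''$ with $\rk\Lambda''=\rk\Lambda+1$, namely the saturation of $j(\Lambda)+\mathbb{Z}v$ for some $v\in\Lambda'\setminus j(\Lambda)$. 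By functoriality $T_{D',m'}\subseteq T_{D'',m''}$, so it suffices to treat intermediate lattices of corank one, each giving a hyperplane.

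\emph{Main obstacle.} There are infinitely many corank-one intermediate lattices, namely one for each primitive rank-one sublattice of $\Pic(X)/j(\Lambda)$. Over an infinite field a countable union of hyperplanes cannot cover a vector space, but $\bFFp$ is countable, so this argument is unavailable. The plan is to make the hyperplanes explicit. Each $T_{D',m'}$ is cut out inside $T_{D,m}\subseteq H^1(X,T_X)$ by the obstruction to deforming the extra line bundle. For $L=$ a generator of $\Lambda'$ modulo $j(\Lambda)$, this is the linear functional $\xi\mapsto \langle \xi\cup c_1^{dR}(L)\rangle\in H^2(X,\mathcal{O}_X)\cong\bFFp$, where $c_1^{dR}(L)\in H^1(X,\Omega_X)$. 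The functional depends $\mathbb{Z}$-linearly on $L$, and hence factors through $\Pic(X)\otimes\FFp$. Consequently all these hyperplanes are kernels of the images of the finitely many elements of the finite $\FFp$-vector space $(\Pic(X)/j(\Lambda))\otimes\FFp$, so only finitely many distinct hyperplanes occur. One must check that none of these functionals is zero. Since $\Lambda'$ is primitive and $X$ has finite height, $c_1^{dR}$ is injective on $\Pic(X)\otimes\bFFp$ (\cite{lieblichmaulik}, \cite{delignerel}), and the cup-product pairing $H^1(T_X)\times H^1(\Omega_X)\to H^2(\mathcal{O}_X)$ is perfect. These two facts give the nonvanishing.

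\emph{Conclusion.} It remains to show that $T_{D,m}$ is not a finite union of proper hyperplanes. For this, note that $T_{D,m}$ is a vector space over the infinite field $\bFFp$, and such a space is never a finite union of proper subspaces. This gives the strict inclusion in the statement. The step needing the most care is the finiteness reduction: identifying $T_{D',m'}$ with the kernel of the $c_1^{dR}$-functional, and showing that this kernel depends only on the image of $L$ in $\Pic(X)\otimes\FFp$.
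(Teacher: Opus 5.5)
Your proof is correct, but it gets the key finiteness statement by a different route from the paper.

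\textbf{The paper's route.} The paper descends $X$ and $\Pic(X)$ to a finite field $\FFpn$. It uses that the identification of the cotangent spaces with cokernels of $c_1^{\mathrm{Hodge}}\circ j$ is compatible with base change. Hence every $T_{D',m'}$ is the base change of a proper subspace of the finite-dimensional $\FFpn$-vector space $T_{D_0,m_0}$, and such a space has only finitely many subspaces.

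\textbf{Your route.} You obtain finiteness intrinsically. Since $c_1$ is additive, the obstruction functional $\xi\mapsto \xi\cup c_1(L)$ on $T_{D,m}$ depends only on the class of $L$ in the finite group $(\Pic(X)/j(\Lambda))\otimes\FFp$. Properness follows from three ingredients:
\begin{enumerate}
\item injectivity of the Hodge Chern class on $\Pic(X)\otimes\bFFp$ (what you call $c_1^{dR}$ should be this Hodge class in $H^1(X,\Omega_X)$);
\item the double-annihilator property of the Serre duality pairing;
\item the fact that primitivity of $\Lambda'$ forces $\bar L\notin p\,(\Pic(X)/j(\Lambda))$.
\end{enumerate}
All three steps check out.

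\textbf{What each approach buys.} Your argument needs no descent and uses only that the ground field is infinite. So the lemma holds verbatim over any algebraically closed field of characteristic $p$, including countable ones such as $\overline{\FFp(t)}$. Your opening sentence, saying this counting argument is where working over $\bFFp$ enters, therefore undersells your argument. The price is that you need the explicit obstruction class and duality, whereas the paper only uses the cokernel description of the cotangent space plus base change.

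\textbf{Two minor remarks.} First, the corank-one reduction is harmless but unnecessary. Your reasoning shows directly that each $T_{D',m'}$ is the annihilator of $c_1^{\mathrm{Hodge}}(\Lambda'\otimes\bFFp)$, which is determined by the image of $\Lambda'$ in the finite $\FFp$-vector space $\Pic(X)\otimes\FFp$. Second, in your ``Main obstacle'' paragraph the naive statement should read ``over an uncountable field''. Over a countable infinite field, a countable union of hyperplanes can cover a space of dimension at least $2$, which is exactly why that argument is unavailable here.
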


\begin{proof}
    Since $X$ is projective over $\bFFp$ and $\Pic(X)$ is finitely generated we can assume that both $X$ and $\Pic(X)$ are defined over a finite field $\FFpn$. In particular there exists a K3 surface $X_0$ over $\FFpn$ and an isomorphism $X_0 \times_{\FFpn} \bFFp \cong X$ such that pullback induces $\Pic(X_0) \cong \Pic(X)$.
    
    Write $j_0 \coloneqq (\Pic(X_0) \to \Pic(X))^{-1} \circ j$ and note that $(X_0,j_0)$ is $\Lambda$-primitive. Let $\mathrm{Def}_{(X_0,\Lambda,j_0)}$ and $\mathrm{Def}_{(X_0,\Lambda')}$ be the corresponding deformation functors for $\Lambda$ and all intermediate $\Lambda'$, and denote by $(D_0, m_0)$ and $(D'_0,m'_0)$ the local rings prorepresenting them. 
    
    By \cite[Lemma 4.3]{lieblichmaulik} the dual of the tangent space of $\mathrm{Def}_{(X_0,\Lambda,j_0)}$ is naturally identified with the cokernel of $c_{1,X_0}^{\mathrm{Hodge}}\circ (j_0 \otimes_{\mathbb{Z}} \FFpn)$, where $c_{1,X_0}^{\mathrm{Hodge}} \colon \Pic(X_0)\otimes_{\mathbb{Z}}\FFpn \to H^1(X_0, \Omega_{X_0}^1)$ is the first Hodge Chern class map, and accordingly for $\mathrm{Def}_{(X_0,\Lambda')}$, $\mathrm{Def}_{(X,\Lambda,j)}$, $\mathrm{Def}_{(X,\Lambda')}$. Since $X$, $X_0$ are of finite height, the maps $c_1^{\mathrm{Hodge}}$ are injective by \cite[Proposition 10.3]{katsuravandergeerstratifitcation}. As $X$ is the base change of $X_0$ along $\bFFp$ and $j$ is identified with $j_0$ under the pullback $\Pic(X_0) \cong \Pic(X)$, one sees that there exist $\bFFp$-vector space isomorphisms $T_{D_0,m_0}\otimes_{\FFpn} \bFFp \cong T_{D,m}$ and $T_{D'_0,m'_0}\otimes_{\FFpn} \bFFp \cong T_{D',m'}$, which are compatible with all inclusions $T_{D'_0,m'_0} \subseteq T_{D_0,m_0}$ and $T_{D',m'}\subseteq T_{D,m}$. The primitivity of the embeddings of $\Lambda, \Lambda'$, together with $\mathrm{rk} \, \Lambda < \mathrm{rk}\, \Lambda'$ and the injectivity of $c_1 ^{\mathrm{Hodge}}$, implies that the above inclusions are strict.  Since $T_{D_0,m_0}$ is a finite-dimensional vector space over the finite field $\FFpn$, it has finitely many proper linear subspaces. In particular only finitely many $T_{D'_0,m'_0}$ occur. But $\bFFp$ is an infinite field, which implies that $T_{D,m}$ cannot be covered by the finitely many proper linear subspaces $T_{D',m'}$, establishing the lemma.
\end{proof}

Using Lemma \ref{tangentunion} we can now construct the curve inside $\Spf(D)$ we will deform over.

\begin{proposition}\label{compDVRlem}
    Let $X$ be a $\Lambda$-primitive K3 surface of finite height over $\bFFp$. There exists a complete DVR $R$ over $\bFFp$, with residue field $\bFFp$, and an adic surjection $D\to R$, satisfying the following property:
    \begin{description} 
        \item[(DF)] The map $D \to R \to R/m_R ^2$ does not factor through $D \to D'$ for any intermediate $\Lambda'$.
    \end{description}
\end{proposition}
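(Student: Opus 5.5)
By Theorem \ref{prorep}, $D$ is formally smooth of dimension $d=20-\rk\,\Lambda$ over $\bFFp$. Hence we may fix an isomorphism $D\cong \bFFp[[x_1,\dots,x_d]]$, with $m=(x_1,\dots,x_d)$ and $T_{D,m}=(m/m^2)^\vee$. Note that $d\geq 1$, since $\rk\,\Lambda<\rk\,\Pic(X)\leq 20$ by Proposition \ref{heightinequality} ($X$ has finite height). By Lemma \ref{tangentunion} there is a tangent vector $v\in T_{D,m}$ lying in no $T_{D',m'}$ for any intermediate $\Lambda'$. In particular $v\neq 0$. After a linear change of coordinates we may assume that $v$ is dual to $x_1$, i.e. $v(x_1)=1$ and $v(x_i)=0$ for $i\geq 2$.

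Set $R=\bFFp[[t]]$, a complete DVR with residue field $\bFFp$. Define $D\to R$ as the continuous $\bFFp$-algebra map with $x_1\mapsto t$ and $x_i\mapsto 0$ for $i\geq 2$. This map is adic, since it sends $m$ into $(t)$. It is surjective, since $t$ is in the image and the map is continuous between complete rings. Its reduction $D\to R\to R/m_R^2=\bFFp[t]/(t^2)$ is the $\bFFp[\epsilon]$-point of $\Spf(D)$ corresponding to the tangent vector $v$: it is the map $m/m^2\to (t)/(t^2)\cong\bFFp$ given by $v$.

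It remains to check (DF). Suppose the composite $D\to \bFFp[t]/(t^2)$ factored as $D\to D'\to \bFFp[t]/(t^2)$ for some intermediate $\Lambda'$. The second map is a local homomorphism. By the Setup, $D\to D'$ is surjective, so $D'\cong D/I$ and $\Spf(D')\subseteq\Spf(D)$. A map from $D$ to the dual numbers factors through $D/I$ if and only if it kills $I$. On tangent vectors this says that $v$ annihilates the image of $I$ in $m/m^2$. The annihilator of that image is exactly $T_{D',m'}=(m'/m'^2)^\vee$, because $m'/m'^2=m/(m^2+I)$. So factoring would force $v\in T_{D',m'}$, contradicting the choice of $v$. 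Hence (DF) holds.

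I do not expect a serious obstacle; the main content is already in Lemma \ref{tangentunion}. The step needing care is the identification above of factorization through $D\to D'$ with membership of $v$ in the subspace $T_{D',m'}\subseteq T_{D,m}$. This requires the tangent-space inclusion in Lemma \ref{tangentunion} to be precisely the one induced by the surjection $D\to D'$, which holds by construction.
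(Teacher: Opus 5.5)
Your proof is correct and is essentially the paper's argument. The paper also picks $v$ from Lemma \ref{tangentunion} and sets $R=D/I$, where $I$ is generated by lifts of a basis of $v^{\perp}\subseteq m/m^2$; after your change of coordinates this is exactly your kernel $(x_2,\dots,x_d)$. You additionally spell out why $T_{R,m_R}=\langle v\rangle\not\subseteq T_{D',m'}$ is equivalent to (DF), which the paper only asserts.
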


\begin{proof}
    It is enough to construct such a DVR $R$ satisfying $T_{R,m_{R}} \nsubseteq T_{D',m'}$ instead, for all intermediate $\Lambda'$. By Lemma \ref{tangentunion} there exists a vector $v\in T_{D,m}$ with $v \notin T_{D',m'}$ for each $D'$. Consider $v^{\perp} \coloneqq \{ f\in m/m^2 \, | \, v(f)=0 \}$, which is a vector subspace of $m/m^2$ of codimension 1. Choose a basis $\bar{f}_1,\dots,\bar{f}_{d-1}$ of $v^{\perp}$, where $d\coloneqq\mathrm{dim}(D)=20-\rk \, \Lambda$, lift the basis elements to $D$ and let $I=(f_1,\dots,f_{d-1})$ be the ideal they generate. It is clear that the $f_i$ form a regular sequence and the quotient $R\coloneqq D/I$ is regular, with $T_{R,m_R}=\langle v \rangle$. 
\end{proof}

The key part of the proof of Proposition \ref{main result} is showing that $\Pic(\mathcal{X}_{\overline{\eta}})=\Lambda$ and not larger for the candidate $\calX$. The essential ingredient is the following lemma, which is a direct consequence of \cite[Proposition 1.16]{ogusk3cyrstals}. Here, we give an alternative deformation-theoretic argument.

\begin{lemma}\label{tfreelemma}
Let $\calX \to \Spec(R)$ be a K3 space over a complete DVR $R$ of equicharacteristic $p$ with residue field $k$. Assume that $\calX_0$ is ordinary. Then the cokernel of the specialization map $\mathrm{sp}\colon  \Pic(\calX_{\eta}) \to \Pic(\calX_0)$ is torsion-free.
\end{lemma}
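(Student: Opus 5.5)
The plan is to argue by contradiction and reduce to a single prime and a single infinitesimal step. Suppose $L\in\Pic(\calX_0)$ and $n\geq 2$ are such that $nL=\mathrm{sp}(M)$ for some $M$, but $L\notin\mathrm{im}(\mathrm{sp})$. Replacing $L$ by a suitable multiple, we may assume $n=\ell$ is prime. Since $R$ is complete, $\Pic(\calX)\cong\Pic(\calX_\eta)$, and Grothendieck existence holds for the proper algebraic space $\calX$. So it suffices to show that $L$ lifts compatibly to every thickening $\calX_N\coloneqq\calX\times_R R/m_R^{N+1}$ (a compatible system of lifts algebraizes).

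Two facts are used throughout. First, since $H^1(\calX_0,\mathcal{O})=0$, the restriction maps $\Pic(\calX_N)\to\Pic(\calX_{N-1})$ are injective; hence lifts are unique when they exist, and any lift $L_{N-1}$ automatically satisfies $\ell L_{N-1}=M|_{\calX_{N-1}}$. Second, the obstruction to lifting a line bundle from $\calX_{N-1}$ to $\calX_N$ is a class $\mathrm{ob}\in H^2(\calX_0,\mathcal{O})\otimes_k m_R^N/m_R^{N+1}$, and $\mathrm{ob}$ is additive, coming from the sequence $0\to I\otimes\mathcal{O}_{\calX_0}\to\mathcal{O}^*_{\calX_N}\to\mathcal{O}^*_{\calX_{N-1}}\to 1$.

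\emph{Case $\ell\neq p$.} We have $\ell\cdot\mathrm{ob}(L_{N-1})=\mathrm{ob}(M|_{\calX_{N-1}})=0$. The target is a $k$-vector space in which $\ell$ is invertible, so $\mathrm{ob}(L_{N-1})=0$ and induction on $N$ finishes this case. Ordinarity is not needed here.

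\emph{Case $\ell=p$.} This is the main obstacle, because multiplication by $p$ kills the obstruction space and additivity gives no information. Here I would replace line bundles by flat cohomology. By the Kummer sequence, $\Pic(\calX_N)/p\hookrightarrow H^2_{\mathrm{fl}}(\calX_N,\mu_p)$. I would study how $H^2_{\mathrm{fl}}(-,\mu_p)$ deforms using the exact sequence $0\to\mu_p\to\mathbb{G}_m\xrightarrow{F}\mathbb{G}_m\to 0$ together with the filtration by $1+I\mathcal{O}$. In characteristic $p$, $p$-th powers of local sections of $1+I\mathcal{O}$ are trivial, so the relevant extension problem becomes additive. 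Concretely, the condition that $M|_{\calX_N}$ becomes a $p$-th power reduces to solving an equation of the form $(F^*-\mathrm{id})(x)=c$ in $H^2(\calX_0,\mathcal{O})\otimes I$, or an equivalent Artin--Schreier type equation. Here $c$ is built from $\mathrm{ob}(L_{N-1})$ and from the known lift of $M$.

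This is exactly where ordinarity enters. The Frobenius on the one-dimensional space $H^2(\calX_0,\mathcal{O}_{\calX_0})$ is bijective, so on the $\bar k$-linearized space the map $F-\mathrm{id}$ is surjective, and it is injective up to an $\FFp$-rational line. Equivalently, the formal Brauer group $\widehat{\mathrm{Br}}$ is of multiplicative type $\widehat{\mathbb{G}}_m$, so $\widehat{\mathrm{Br}}[p]\cong\mu_p$ is rigid. This should force the $p$-th root of $M|_{\calX_N}$ to exist, namely the unique lift $L_N$ of $L_{N-1}$.

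I expect the hardest part to be making this identification of the $\mu_p$-obstruction with an $F-\mathrm{id}$ problem precise. If that step gets too technical, the fallback is to invoke \cite[Proposition 1.16]{ogusk3cyrstals} directly. There the crystalline/F-crystal description of ordinary K3 surfaces shows that the lifting locus of a line bundle $L$ with $pL$ liftable coincides with that of $pL$. Once every $L_{N-1}$ has been shown to lift, algebraizing the compatible system gives $L\in\mathrm{im}(\mathrm{sp})$, which is a contradiction.
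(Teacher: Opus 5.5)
\textbf{The case $\ell=p$ is missing.} Your reduction to a prime $\ell$, the injectivity of $\Pic(\calX_N)\to\Pic(\calX_{N-1})$, the additivity of the obstruction and the case $\ell\neq p$ are exactly as in the paper. The gap is the case $\ell=p$, which is the whole content of the lemma. You give no argument there, and the one you outline cannot work as set up.

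Your induction tries to decide whether $L_{N-1}$ lifts to $\calX_N$ using only $M|_{\calX_N}$ together with $L_{N-1}$. On a single thickening, the implication ``$L^{\otimes p}$ lifts $\Rightarrow$ $L$ lifts'' is false, and this already fails for $N=1$:
\begin{itemize}
\item Since $(m_R\mathcal{O}_{\calX_1})^p=0$, the absolute Frobenius of $\calX_1$ factors through a morphism $G\colon\calX_1\to\calX_0$. So $G^*L$ lifts $L^{\otimes p}$ to \emph{every} first-order deformation.
\item By contrast, $L$ lifts only along the hyperplane of Kodaira--Spencer classes orthogonal to $c_1^{\mathrm{Hodge}}(L)$. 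This class is nonzero for $L\notin p\Pic(\calX_0)$, because $\calX_0$ has finite height.
\end{itemize}
Hence any reduction of the $p$-th root problem on $\calX_N$ to an equation $(F^*-\mathrm{id})(x)=c$ that is always solvable would prove too much.

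\textbf{The missing idea: use $M$ on deeper thickenings.} What you lack is a way to exploit that $M$ exists on much deeper thickenings. The paper does this with the morphism $F_n\colon\calX_{p(n+1)-1}\to\calX_n$ given by $x\mapsto x^p$, which is defined because $R$ has characteristic $p$.
\begin{itemize}
\item Both $F_n^*L_n$ and $M|_{\calX_{p(n+1)-1}}$ restrict to $F^*L\cong L^{\otimes p}$ on $\calX_0$, so they are isomorphic.
\item The map $s\mapsto s^p$ from $m_R^{n+1}\mathcal{O}_{\calX_{n+1}}$ to $m_R^{p(n+1)}\mathcal{O}_{\calX_{p(n+2)-1}}$ carries the obstruction to lifting $L_n$ to $\calX_{n+1}$ onto the obstruction to extending $M|_{\calX_{p(n+1)-1}}$ to $\calX_{p(n+2)-1}$. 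The latter vanishes because $M$ is defined on all of $\calX$.
\item After projecting to $m_R^{p(n+1)}/m_R^{p(n+1)+1}$, the induced map on $H^2$ is $H^2(F^{\#})\otimes(v\mapsto v^p)$. It is injective precisely because Frobenius is bijective on $H^2(\calX_0,\mathcal{O}_{\calX_0})$ and $k$ is perfect.
\end{itemize}
So ordinarity enters through injectivity of $F$ itself, not through $F-\mathrm{id}$. Lifting $L$ to level $n+1$ uses $M$ at level $p(n+2)-1$.

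\textbf{On the fallback to Ogus.} Citing \cite[Proposition 1.16]{ogusk3cyrstals} is legitimate, since the paper notes the lemma follows directly from it. But then the case $\ell=p$ is entirely outsourced. Your paraphrase of that result is also false scheme-theoretically: by the computation above, the lifting locus of $L^{\otimes p}$ contains the full first-order neighbourhood, while that of $L$ does not. The correct statement concerns $R$-points, i.e.\ it uses that $R$ is reduced. That is exactly why $M$ must be used over all of $R$ rather than over each $R/m_R^{N+1}$ separately.
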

    
\begin{proof}
    Equivalently, we show that the restriction map $\Pic(\calX)\to \Pic(\calX_0)$ has torsion-free cokernel. Let $L\in \Pic(\calX_0)$, and suppose that there exists $\mathcal{M}\in \Pic(\calX)$ such that $\mathcal{M}\vert_{\calX_0}=L^{\otimes r} \in \Pic(\calX_0)$ for some $r>0$. We must show that there exists $\mathcal{L}\in \Pic(\calX)$ such that $\mathcal{L}\vert_{\calX_0}=L$. 
    
    We first show that $L$ lifts to a formal line bundle on the formal completion of $\calX$ along $\calX_0$. Let $m$ be the maximal ideal of $R$. Define $\calX_n \coloneqq \calX \times_{R} R/m^{n+1}$ and $\mathcal{L}_0\coloneqq L\in\Pic(\calX_0)$. Assume that we have defined $\mathcal{L}_n \in \Pic(\calX_n)$ lifting $L$. Let $I_{n+1} \coloneqq m^{n+1}\mathcal{O}_{\calX_{n+1}}=\mathrm{ker}(\mathcal{O}_{\calX_{n+1}} \to \mathcal{O}_{\calX_{n}})$ and observe that there exists an isomorphism of $\mathcal{O}_{\calX_0}$-modules $I_{n+1}\cong \mathcal{O}_{\calX_0} \otimes_k \frac{m^{n+1}}{m^{n+2}}$. The obstruction $o_n(\mathcal{L}_n)$ to deforming $\mathcal{L}_n$ over $\calX_{n+1}$ lies in $H^2(\calX_{n+1}, I_{n+1})\cong H^2(\calX_0, \mathcal{O}_{\calX_0}) \otimes_k \frac{m^{n+1}}{m^{n+2}}$, where  $o_n$ is the coboundary map induced by the short exact sequence of sheaves of abelian groups 
    \[0 \longrightarrow I_{n+1} \xrightarrow{s\mapsto 1+s} \mathcal O_{\calX_{n+1}}^\times \longrightarrow \mathcal O_{\calX_n}^\times \longrightarrow 1.\] 
    The vanishing of $H^1(\calX_0,\mathcal{O}_{\calX_0})$ implies that the maps $\Pic(\calX_{n+1}) \to \Pic(\calX_n)$ are injective, hence so is $\Pic(\calX_n) \to \Pic(\calX_0)$ for all $n>0$. As both $\mathcal{L}_n ^{\otimes r}$ and $\mathcal{M}\vert_{\calX_n}$ are lifts of $L^{\otimes r}$ they are therefore isomorphic, hence $r \cdot o_n(\mathcal{L}_n)=o_n(\mathcal{M}\vert_{\calX_n})=0$. We now consider the following cases:
    
    $\bullet \ p\coloneqq\mathrm{char}(k) \nmid r$: Since $H^2(\calX_{n+1},I_{n+1})$ is a $k$-vector space, it has only $p$-torsion. This implies that $o_n(\mathcal{L}_n)=0$ in this case.
    
    $\bullet \ p\mid r$: It suffices to consider only the case $p=r$. Since we are in the equicharacteristic setting, we can define the morphism of schemes $F_n \colon \calX_{p(n+1)-1} \to \calX_n$ as the identity on topological spaces and on the level of sheaves given by $B/m^{n+1}B \to B$, $x \mapsto x^p$ for $\Spec(B)$ an open affine of $\calX_{p(n+1)-1}$. Set $J_{n+1}\coloneqq m^{p(n+1)}\mathcal{O}_{\calX_{p(n+2)-1}}=\mathrm{ker}(\mathcal{O}_{\calX_{p(n+2)-1}} \to \mathcal{O}_{\calX_{p(n+1)-1}})$ and observe that there exists a commutative diagram of sheaves of abelian groups
    \[
        \begin{tikzcd}
            0 \arrow[r] 
              & I_{n+1}
                \arrow[r,"s\mapsto 1+s"]
                \arrow[d,"\varphi_{n+1}"']
              & \mathcal O_{\calX_{n+1}}^\times
                \arrow[r]
                \arrow[d,"F_{n+1}^{\#}"]
              & \mathcal O_{\calX_n}^\times
                \arrow[r]
                \arrow[d,"F_n^{\#}"]
              & 1 \\
            0 \arrow[r]
              & J_{n+1}
                \arrow[r,"s\mapsto 1+s"]
              & \mathcal O_{\calX_{p(n+2)-1}}^\times
                \arrow[r]
              & \mathcal O_{\calX_{p(n+1)-1}}^\times 
              \arrow[r]
              & 1 ,
        \end{tikzcd}
    \]
    where $\varphi_{n+1}(s)=s^p$. The long exact sequence in cohomology induces a commutative square 
    \begin{equation}\label{obstruction-square}
        \begin{tikzcd}
        \Pic(\calX_n)
            \arrow[r,"o_n"]
            \arrow[d,"F_n ^*"']
          & H^2(\calX_0,I_{n+1})
            \arrow[d,"H^2(\varphi_{n+1})"] \\
        \Pic(\calX_{p(n+1)-1})
            \arrow[r,"O_n"']
          & H^2(\calX_{p(n+2)-1},J_{n+1}),
        \end{tikzcd}
    \end{equation}
    and we claim that $H^2(\varphi_{n+1})$ is injective. Indeed, set $\overline{J_{n+1}}\coloneqq J_{n+1}/m^{p(n+1)+1}\mathcal{O}_{\calX_{p(n+2)-1}}$ and note that there is a natural isomorphism of $\mathcal{O}_{\calX_0}$-modules $\overline{J_{n+1}}\cong \mathcal{O}_{\calX_0} \otimes_k \frac{m^{p(n+1)}}{m^{p(n+1)+1}}$. Under this identification and the analogous one for $I_{n+1}$, the composite $I_{n+1} \xrightarrow{\varphi_{n+1}} J_{n+1} \to \overline{J_{n+1}}$ is given by $a\otimes v \mapsto F^{\#}(a) \otimes v^p$, where $F$ is the absolute Frobenius of $\calX_0$. Hence, the induced map on second cohomology $H^2(\calX_0, \mathcal{O}_{\calX_0}) \otimes_k \frac{m^{n+1}}{m^{n+2}} \to H^2(\calX_0, \mathcal{O}_{\calX_0}) \otimes_k \frac{m^{p(n+1)}}{m^{p(n+1)+1}}$ is given by $a\otimes v \mapsto H^2(F^{\#})(a) \otimes v^p$. But $\calX_0$ is ordinary, so $H^2(F^{\#})$ is bijective. Also $k$ is algebraically closed, hence perfect, which means that $\frac{m^{n+1}}{m^{n+2}} \to \frac{m^{p(n+1)}}{m^{p(n+1)+1}}$, $v \mapsto v^p$ is also bijective. Thus the composite of $H^2(\varphi_{n+1})$ with the map induced by $J_{n+1} \to \overline{J_{n+1}}$ is injective, hence so is $H^2(\varphi_{n+1})$. Now, observe that $F$ factors through $F_0$, from which we can deduce that $(F_n^*\mathcal{L}_n)|_{\calX_0}\cong F^*\mathcal{L}_0\cong \mathcal{L}_0^{\otimes p}\cong \mathcal{M}|_{\calX_0}$. Injectivity of the restriction $\Pic(\calX_{p(n+1)-1})\to \Pic(\calX_0)$ hence implies that $F_n ^* \mathcal{L}_n \cong \mathcal{M}\vert_{\calX_{p(n+1)-1}}$. This means that $O_n(F_n ^* \mathcal{L}_n)=0$, which together with the commutativity of (\ref{obstruction-square}) and injectivity of $H^2(\varphi_{n+1})$ implies that $o_n(\mathcal{L}_n)=0$.

    The vanishing of the obstructions $o_n(\mathcal{L}_n)$ for all $n\geq0$ implies that we can lift $\mathcal{L}_0$ to a formal line bundle on the formal completion of $\calX$ along $\calX_0$. Grothendieck's existence theorem for algebraic spaces \cite[Tag 08BE]{stacks-project} implies that this formal line bundle is induced by a line bundle $\mathcal{L}\in \Pic(\calX)$ on $\calX$. In particular, $\mathcal{L}\vert_{\calX_0}=L$.
\end{proof}

\begin{proof}[Proof of Proposition \ref{main result}]
    Let $\varphi \colon D \to R$ be the map that satisfies the property (DF) from Proposition \ref{compDVRlem}. Let $A$ denote the Henselization of $\bFFp[t]_{(t)}$, which is a DVR by \cite[\href{https://stacks.math.columbia.edu/tag/0AP3}{Tag 0AP3}]{stacks-project}. It then follows that $\mathrm{Frac}(A)$ is a separable algebraic extension of $\overline{\mathbb{F}_p}(t)$, hence $\overline{\mathrm{Frac}(A)} \cong \overline{\mathbb{F}_p(t)}$. Since both $R$ and $\hat{A}$ are complete equicharacteristic DVRs with residue field $\bFFp$, there exist isomorphisms $R \cong \hat{A}\cong \bFFp[[t]]$ as $\bFFp$-algebras. We fix one and for the remainder of this proof replace $R$ by $\hat{A}$.

    Via this identification $\varphi$ induces a formal $\Lambda$-primitive deformation $(\mathfrak{Y}, \mathfrak{J}) \to \Spf(\hat{A})$ of $(X,j)$. As $j$ is a quasipolarization there exists a primitive $\lambda \in \Lambda$ such that $j(\lambda)$ is big and nef, and write $\lambda^2=2d$. Then $(\mathfrak{Y}, \mathfrak{J}(\lambda))$ is a formal object of the stack $M_{2d}$ of primitively quasipolarized K3 spaces, which is algebraic by \cite[Proposition 2.1]{mauliksupersingulark3forlargeprimes}. Hence $(\mathfrak{Y}, \mathfrak{J}(\lambda))$ is effective, given by the formal completion of a K3 space $(\calY,\mathcal{L}) \to \Spec(\hat{A})$, with $\mathcal{L}\in \Pic(\calY)$ inducing $\mathfrak{J}(\lambda)$. By Grothendieck's existence theorem for algebraic spaces \cite[Tag 08BE]{stacks-project} we can algebraize the rest of the formal line bundles induced by $\mathfrak{J}$ in order to obtain the $\Lambda$-quasipolarized K3 space $(\calY \slash \hat{A}, \mathcal{J})$, with $\mathcal{J}_0=j$.

    The moduli functor of $\Lambda$-quasipolarized K3 spaces is locally of finite presentation. Hence Artin approximation \cite[Theorem I.12]{Artin-Algebraic-Approximation} yields a $\Lambda$-quasipolarized K3 space $(\calX, J) \to \Spec(A)$ and an isomorphism of $\Lambda$-primitive deformations $(\mathcal X,J)\big\vert_{A/m_A^2}\cong(\mathcal Y,\mathcal J)\big\vert_{\widehat A/m_{\widehat A}^2}$.

    We now show that $\Pic(\calX_{\overline{\eta}}) \cong \Lambda$. By definition we have that $J_{\overline{\eta}} \colon \Lambda \hookrightarrow\Pic(\calX_{\overline{\eta}})$ is injective. Assume for contradiction that $J_{\overline{\eta}}(\Lambda) \subsetneq \Pic(\calX_{\overline{\eta}})$.  The Picard scheme $\Pic_{\calX_{\eta}\slash \kappa(\eta)}$ is \'etale over $\kappa(\eta)$, which implies that every line bundle of $\calX_{\overline{\eta}}$ is defined over a finite separable extension of $\kappa(\eta)$. By standard limit arguments we then have $\Pic(\calX_{\overline{\eta}})\cong\Pic(\calX_{K'})$, where $K'$ is a finite separable extension of $\kappa(\eta)$. The integral closure of $A$ in $K'$ is a DVR $A'$, with ramification index $s$ and $\mathrm{Frac}(A')=K'$.
    
    The formal completions of the base changes $\calX_{\hat{A}}$, $\calX_{\hat{A'}}$ of $\calX$ along $A \to \hat{A}\to\hat{A'}$ induce formal $\Lambda$-primitive deformations of $(X,j)$, hence maps $D \xrightarrow{\psi}\hat{A}\to\hat{A'}$. By Lemma \ref{tfreelemma} the specialization map $\mathrm{sp}_{\hat{A'}}\colon \Pic(\calX_{\hat{K'}}) \to \Pic(X)$ is primitive. If $\Lambda' \coloneqq \mathrm{sp}_{\hat{A'}}(\Pic(\calX_{\hat{K'}}))$, then our assumption implies that $j(\Lambda) \subsetneq \Lambda'$, hence $\Lambda'$ is intermediate. Since every line bundle on the generic fiber of $\calX_{\hat{A'}}$ extends over $\hat{A'}$ and its restriction to the closed fiber is its specialization, it follows that $\calX_{\hat{A'}}$ and the formal deformation it induces are $\Lambda'$-primitive. In particular, this means that the ring map $D \xrightarrow{\psi} \hat{A} \to \hat{A'}$ factors through $D \to D'$, which places us in the situation of the following diagram:\[\begin{tikzcd}[cramped]
	& {D'} \\
	D & {\hat{A}} & {\hat{A'}} \\
	{\hat{A}} & {\hat{A}/m_{\hat{A}}^2} & {\hat{A'}/m_{\hat{A'}}^{2s}}
	\arrow["\exists "',from=1-2, to=3-3, in=60, out=0, looseness=1.3, swap, dashed]
	\arrow[from=2-1, to=1-2]
	\arrow["{\psi}", from=2-1, to=2-2]
	\arrow["\varphi"', from=2-1, to=3-1]
	\arrow[from=2-2, to=2-3]
	\arrow[from=2-2, to=3-2]
	\arrow[from=2-3, to=3-3]
	\arrow[from=3-1, to=3-2]
	\arrow[from=3-2, to=3-3]
	\arrow[from=2-1, to=3-2, draw=none, "\circlearrowright" description]
\end{tikzcd}\]
where the left square is commutative due to Artin approximation and the dashed arrow exists. By the injectivity of the bottom-right arrow, the map $D \xrightarrow{\varphi} \hat{A} \to \hat{A}/m_{\hat{A}}^{2}$ then factors through $D \to D'$, which contradicts the property (DF) that $\varphi$ satisfies.
\end{proof}

\section{Constructing \texorpdfstring{$\Lambda$}{L}-quasipolarized K3 surfaces}\label{construction-section}

In this section, we construct $\Lambda$-quasipolarized ordinary K3 surfaces over $\bFFp$ for every $\Lambda$ in Table \ref{table1}. This is enough to prove the existence part of Theorem \ref{mainthm} by applying Proposition \ref{main result} to the constructed surfaces. The proof is given at the end of this section.

\stoptoc
\subsection{Primitive lattice embeddings}   

In order to simplify the construction we first examine the possible primitive embeddings between the lattices of Table \ref{table1}. 

\begin{proposition}\label{simplification}
    Every lattice in Table \ref{table1} embeds primitively into $U\oplus E_8^{\oplus2}\oplus A_1$.
\end{proposition}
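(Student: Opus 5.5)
The approach is Nikulin's existence criterion for primitive embeddings, applied to the orthogonal complements. Write $N\coloneqq U\oplus E_8^{\oplus2}\oplus A_1$, an even lattice of signature $(1,18)$ with discriminant group $\mathbb{Z}/2$ and discriminant form $q_N = \langle -1/2\rangle$. A primitive embedding $\Lambda\hookrightarrow N$ of a lattice $\Lambda$ from Table \ref{table1} of signature $(1,\rho-1)$ is equivalent to the existence of an even negative definite lattice $K$ of rank $19-\rho$ together with a primitive embedding $\Lambda\oplus K\hookrightarrow N$ of finite index glued appropriately. The plan is to avoid the gluing bookkeeping by using a sufficiently general form of \cite{Nikulin1983}'s Theorem 1.12.2. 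By that result, a primitive embedding of an even lattice $S$ of signature $(t_+,t_-)$ into an even lattice $N$ of signature $(l_+,l_-)$ exists whenever $N$ is unique in its genus and suitable inequalities hold. Concretely, it suffices to have $\rk N - \rk S > \ell(A_S)+\ell(A_N)$, where $\ell$ denotes the minimal number of generators, together with $l_+\geq t_+$ and $l_-\geq t_-$. Here $\ell(A_N)=1$ and $\rk N=19$, so the criterion applies as soon as $\ell(A_\Lambda)<18-\rho$.

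The first step is to sort Table \ref{table1} into two groups. For most lattices of small rank $\rho$ (say $\rho\le 8$), the length satisfies $\ell(A_\Lambda)\le\rho$, so the inequality holds because $2\rho<18$. The remaining step is to handle the lattices of large rank. Those listed with $\rho\ge 9$ typically have the form $U\oplus E_8\oplus(\text{root lattices})$ or $U(2)\oplus\cdots$, and their discriminant groups are small: mostly $2$-elementary with small $\ell$.

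For these, I will instead construct the embedding by hand. Since $U\oplus E_8$ is unimodular and embeds canonically, I would write $\Lambda=U\oplus E_8\oplus R$ where possible, and reduce to primitively embedding $R$ into $E_8\oplus A_1$. This is a small check by root systems, namely sub-root-systems of $E_8$ such as $A_1^{k}$, $D_n$, $E_7$, $A_2$, etc., plus the extra $A_1$. For the $2$-elementary lattices in the table, which are those with $\rho\ge 9$ coming from Nikulin's list with invariants $(r,a,\delta)$, I would use the classification of $2$-elementary lattices and its embedding criterion. Equivalently, I would check that the orthogonal complement with invariants $(19-r,\,a\pm1,\,\delta')$ exists by Nikulin's existence conditions for $2$-elementary lattices ($a\le r$, $r\equiv a \bmod 2$, parity conditions on $\delta$, and so on).

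The main obstacle is the bookkeeping across the 118 cases plus the rank-$1$ and rank-$2$ infinite families. The rank-$1$ and rank-$2$ families are easy: $\langle 2n\rangle$ and rank-$2$ lattices embed into $U\oplus E_8^{\oplus 2}$ directly by Nikulin's criterion, since $\ell\le 2$. The delicate cases are those of rank $18$ or $19$, where the inequality fails. There I would argue directly. For $\rho=19$, the table's lattice should be $N$ itself, or else a lattice whose discriminant form I check equals $\langle -1/2\rangle$ and hence is isometric to $N$ by uniqueness in the genus. For $\rho=18$, I need a vector $v\in N$ with $v^\perp\cong\Lambda$. I would find $v$ inside the $E_8\oplus A_1$ part, for example $v$ a root of $E_8$ or of $A_1$, or $v=e_8+a$. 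The complement then realizes $U\oplus E_8\oplus E_7\oplus A_1$, $U\oplus E_8^{\oplus2}$, $U\oplus E_8\oplus D_8$-type lattices, and so on, and these must be matched with the table entries.
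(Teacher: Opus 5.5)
Your framework is sound and your treatment of $\rho\le 8$ is fine. The criterion you quote is valid, but it is not Theorem 1.12.2 of \cite{Nikulin1980-nm}, which concerns unimodular targets. It follows instead from Nikulin's Proposition 1.15.1 with trivial gluing, Corollary 1.10.2 for the existence of the complement, and uniqueness of $N$ in its genus. With $\ell(A_N)=1$ and $\rk N=19$ it reduces to $\ell(A_\Lambda)<18-\rho$. That is exactly the condition of Theorem \ref{laticcethm} for the unimodular lattice $U\oplus E_8^{\oplus2}$, so passing to $N$ buys nothing.

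The real problem is your diagnosis of where this inequality fails. The rank-$18$ and rank-$19$ cases you call delicate are vacuous. The only table entries of those ranks are $U\oplus E_8^{\oplus2}$, an orthogonal summand of $N$, and $N$ itself. Lattices such as $U\oplus E_8\oplus E_7\oplus A_1$ do not occur in Table \ref{table1}. Apart from these, the inequality fails for exactly ten lattices, all with $\ell(A_\Lambda)=18-\rho$: $U(2)\oplus A_1^{\oplus7}$, $U\oplus A_1^{\oplus8}$, $U\oplus D_4\oplus A_1^{\oplus5}$, $U\oplus D_4^{\oplus2}\oplus A_1^{\oplus2}$, $U\oplus D_8\oplus A_1^{\oplus3}$, $U\oplus D_8\oplus D_4$, $U\oplus E_8\oplus A_1^{\oplus4}$, $U\oplus E_8\oplus D_4\oplus A_1$, $U\oplus E_8\oplus D_6$ and $U\oplus E_8\oplus E_7$. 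These are the entire content of the proposition, and your proposal never treats them concretely. Your sub-case (a) does settle the last four, using primitive copies of $A_1^{\oplus3}\oplus A_1$, $D_4\oplus A_1$, $D_6$, $E_7$ in $E_8\oplus A_1$; beware that not every root sublattice of $E_8$ is primitive. But $U\oplus D_8\oplus D_4$ is out of its reach, and it is the case where the extra $A_1$ is indispensable. It has no primitive embedding into $U\oplus E_8^{\oplus2}$, since a complement would be $M(2)$ with $M$ an even unimodular negative definite lattice of rank $4$.

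Your sub-case (b), as worded, is not enough either. The mere existence of a $2$-elementary lattice with invariants $(19-r,a\pm1,\delta')$ does not give an embedding; the complement must carry the discriminant form dictated by the gluing. The $a-1$ option is impossible for $U\oplus D_8\oplus D_4$, because $q_\Lambda$ is even and cannot split off $\langle -1/2\rangle$. Made precise, however, (b) closes all ten cases at once. Each of them is $2$-elementary with $a+\rho=18$, so $K=A_1^{\oplus(19-\rho)}$ satisfies $q_K\cong(-q_\Lambda)\oplus\langle -1/2\rangle$. Both sides are odd $2$-elementary forms of length $19-\rho$ and signature $\equiv\rho-3\pmod 8$, and such forms are determined by these invariants. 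Trivial gluing of $\Lambda\oplus K$ then gives a lattice in the genus of $N$, hence $N$ itself, in which $\Lambda$ is primitive.

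The paper proceeds differently. It uses Theorem \ref{rk9pol} for $\rho\le9$, which absorbs $U(2)\oplus A_1^{\oplus7}$, and Theorem \ref{laticcethm} into $U\oplus E_8^{\oplus2}$ for the rest. It disposes of seven of the remaining nine via the primitive sublattices $A_1^{\oplus4}$, $D_4\oplus A_1$, $D_6$, $E_7$ of $E_8$, and reduces $U\oplus D_8\oplus A_1^{\oplus3}$ to $U\oplus D_8\oplus D_4$. For the latter it shows that $D_8\oplus D_4$ has complement $U(2)\oplus D_4\oplus A_1\cong U\oplus A_1^{\oplus5}$ in $N$. Equivalently, $U\oplus D_8\oplus D_4$ has complement $A_1^{\oplus5}$, which is exactly your $K$. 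Your uniform gluing, once stated explicitly, would be a tidier treatment of the boundary cases than the paper's case-by-case argument.
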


The proof of Proposition \ref{simplification} relies on the following two useful results on primitive lattice embeddings. For an even lattice $S$, we denote by $A_S\coloneqq S^*/S$ the discriminant group of $S$ and $l(A_S)$ the minimal number of generators.

\begin{theorem}\cite[Corollary 1.12.3]{Nikulin1980-nm}\label{laticcethm}
Let $S$ be an even lattice of signature $(t_-, t_+)$. Then there exists a primitive embedding of $S$ into an even unimodular lattice $M$ of signature $(l_-,l_+)$ if the following conditions are simultaneously satisfied
\begin{enumerate}
    \item $l_+ -l_- \equiv 0 \pmod 8$.
    \item $t_{\pm} \leq l_{\pm}$.
    \item $l(A_S)<\mathrm{rk}\,M-\mathrm{rk}\,S$.
\end{enumerate}
    \end{theorem}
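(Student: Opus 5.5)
The plan is to go through Nikulin's discriminant-form description of primitive embeddings into unimodular lattices. Suppose $S\hookrightarrow M$ is primitive with $M$ even unimodular, and set $K\coloneqq S^{\perp}$. Then $M$ is an overlattice of $S\oplus K$ given by the graph of an anti-isometry $A_S\cong A_K$ of discriminant forms, so $q_K\cong -q_S$. Conversely, suppose $K$ is an even lattice of signature $(l_--t_-,\,l_+-t_+)$ with an isometry $\gamma\colon q_K\xrightarrow{\sim}-q_S$. Then the subgroup $\Gamma_\gamma\subset A_S\oplus A_K$, the graph of $\gamma$, is isotropic for $q_S\oplus q_K$ and of order $|A_S|$. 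It therefore defines an even overlattice $M'\supset S\oplus K$ which is unimodular, has signature $(l_-,l_+)$, and contains $S$ primitively, since $\Gamma_\gamma\cap A_S=0$. So the first step is to establish this dictionary: primitive embeddings of $S$ into some even unimodular lattice of signature $(l_-,l_+)$ correspond to even lattices $K$ of complementary signature with $q_K\cong -q_S$. This is the routine part, following the standard overlattice correspondence between even overlattices of $S\oplus K$ and isotropic subgroups of $A_S\oplus A_K$.

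The second step is to produce $K$. I would invoke Nikulin's existence theorem for even lattices with prescribed discriminant form (his Theorem 1.10.1 and Corollary 1.10.2). It says that an even lattice of signature $(r_-,r_+)$ with discriminant form $q$ exists provided two conditions hold: $r_+-r_-\equiv \mathrm{sign}(q)\pmod 8$, and $r_++r_->l(A_q)$. We apply it with $q=-q_S$ and $(r_-,r_+)=(l_--t_-,\,l_+-t_+)$.

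\begin{itemize}
\item Condition (2) of Theorem \ref{laticcethm} makes the signature nonnegative.
\item By Milgram's formula $\mathrm{sign}(q_S)\equiv t_+-t_-\pmod 8$. Hence $\mathrm{sign}(-q_S)\equiv t_--t_+$, and condition (1) gives $r_+-r_-=(l_+-l_-)-(t_+-t_-)\equiv t_--t_+\pmod 8$, which is the required congruence.
\item Condition (3) is exactly $r_++r_->l(A_S)=l(A_{-q_S})$.
\end{itemize}

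This is the step I expect to be the main obstacle if one wants a self-contained argument. Nikulin's existence theorem rests on a local-global analysis: one builds $p$-adic lattices with the prescribed local discriminant forms and then checks a product formula for the genus symbols. The strict inequality on the rank is precisely what lets every $p$-adic part be realized with enough room. I would cite this rather than reprove it.

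The last step is to identify the resulting unimodular lattice $M'$ with the given $M$. If $M$ is indefinite, this follows from the uniqueness of even unimodular lattices of a given indefinite signature, by the classification as $U^{\oplus a}\oplus E_8(\pm1)^{\oplus b}$. In the definite case uniqueness fails in general, so there one only gets an embedding into some member of the genus; for our applications $M$ will be the indefinite K3 lattice $U^{\oplus 3}\oplus E_8^{\oplus 2}$, so the indefinite case suffices.
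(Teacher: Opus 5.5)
Your argument is correct and is essentially Nikulin's own proof of his Corollary 1.12.3; the paper gives no proof of its own and simply cites the result. Your overlattice/anti-isometry dictionary is his Theorem 1.12.2, reducing the question to finding an even $K$ of signature $(l_--t_-,\,l_+-t_+)$ with $q_K\cong -q_S$, and his Corollary 1.10.2 together with Milgram's formula supplies such a $K$ under exactly the three stated conditions. One small correction: in the paper's application (Proposition \ref{simplification}) the target is $M=U\oplus E_8^{\oplus 2}$ of rank $18$, not the K3 lattice, but it is still indefinite, so your identification of $M'$ with $M$ via uniqueness of indefinite even unimodular lattices goes through unchanged.
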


\begin{theorem}\cite[Theorem 1.12.4]{Nikulin1980-nm}\label{rk9pol}
    Every even lattice of signature $(t_+,t_-)$ admits a primitive embedding into an even unimodular lattice of signature $(l_+,l_-)$ if and only if:
    \begin{enumerate}
        \item $l_+ -l_- \equiv 0 \pmod 8$.
        \item $t_{\pm} \leq l_{\pm}$.
        \item $t_+ + t_- \leq \frac{1}{2}(l_+ + l_-)$.
    \end{enumerate}
\end{theorem}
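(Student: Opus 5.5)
The plan is to treat this as the sharp form of Theorem \ref{laticcethm}, and to pass through orthogonal complements and discriminant forms. Throughout I use Nikulin's gluing principle: primitive embeddings $S\hookrightarrow M$ into an even unimodular $M$ correspond to even lattices $T$ with $\mathrm{sign}(T)=(l_+-t_+,\,l_--t_-)$ and $q_T\cong -q_S$. Indeed, $M$ is recovered from $S\oplus T$ as the overlattice given by the graph of an anti-isometry $A_S\to A_T$. So the whole statement becomes a question about which genera $(\mathrm{sign},-q_S)$ are nonempty.

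\emph{Necessity.} Conditions (1) and (2) are immediate: (1) is the classical signature condition for even unimodular lattices, and (2) holds because $S\subset M$. For (3), the complement $T$ has $A_T\cong A_S$, so $l(A_S)\le \rk T=\rk M-\rk S$. Now take a test lattice with $l(A_S)=\rk S$, for example $S=A_1(\pm1)^{a}\oplus A_1(\mp 1)^{b}$ of the prescribed signature, or $S=U(2)^{\oplus\cdots}$ plus such summands to fix parity issues. For this lattice the inequality reads $\rk S\le \rk M-\rk S$, which is exactly (3).

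\emph{Sufficiency.} Assume (1)--(3) and split into two cases.
\begin{itemize}
\item If $\rk S<\tfrac12\rk M$, then $l(A_S)\le \rk S<\rk M-\rk S$. Hence Theorem \ref{laticcethm} applies directly and gives the primitive embedding.
\item If $\rk S=\tfrac12 \rk M$, I must construct $T$ with $\rk T=\rk S$, the prescribed signature, and $q_T\cong -q_S$. The first candidate is $S(-1)$: it has the right discriminant form, and the graph of $\mathrm{id}\colon A_S\to A_{S(-1)}$ is isotropic for $q_S\oplus(-q_S)$ and meets $A_S\oplus 0$ trivially. So $S$ embeds primitively into an even unimodular overlattice $M_0$ of $S\oplus S(-1)$. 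The snag is that $M_0$ has signature $(t_++t_-,\,t_-+t_+)$, which in general differs from $(l_+,l_-)$.
\end{itemize}

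To fix the signature in the equality case, I would invoke Nikulin's existence criterion for a genus with given rank, signature and discriminant form, namely \cite[Theorem 1.10.1]{Nikulin1980-nm}. The signature congruence $\mathrm{sign}(T)\equiv -\mathrm{sign}(S)\pmod 8$ follows from (1) together with the Milgram formula. The remaining conditions are local, at primes $p$ with $l(A_{S,p})=\rk T$. At such primes the required $p$-adic lattice is forced to be $K_p(-q_{S,p})$. This local lattice already exists, because $S\otimes\mathbb{Z}_p$ realizes $q_{S,p}$ in the same rank, so $S(-1)\otimes\mathbb{Z}_p$ realizes $-q_{S,p}$. The determinant condition $\det K_p=\pm|A_{S,p}|\cdot(\text{unit})$ must then match $(-1)^{l_-}|A_S|$ up to squares. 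My plan is to check this using $\det S(-1)=(-1)^{\rk S}\det S$ and the fact that $(l_+-t_+)-(t_-)\equiv 0\pmod 2$ in the equality case.

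\emph{Main obstacle.} I expect the hardest part to be this $p$-adic determinant bookkeeping in the equality case, especially at $p=2$, where odd versus even local components matter. If the direct verification stalls, the fallback is to reduce the signature change to replacing the $(t_-,t_+)$ part of $S(-1)\otimes\mathbb{Z}_p$ by a unimodular $\mathbb{Z}_p$-lattice. This works at odd $p$ because unimodular $\mathbb{Z}_p$-lattices are determined by rank and determinant. At $p=2$ it works because even unimodular $\mathbb{Z}_2$-lattices of fixed rank differ only by $U$ versus $V$ summands, and this is controlled exactly by condition (1).
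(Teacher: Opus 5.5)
The paper does not prove this statement; it quotes it from Nikulin. Your argument is correct and is essentially the argument behind Nikulin's result:
\begin{itemize}
\item The strict-inequality case is Theorem \ref{laticcethm}, because $l(A_S)\le \rk S$.
\item Necessity of (3) comes from a test lattice with $l(A_S)=\rk S$. The lattice $\langle 2\rangle^{\oplus t_+}\oplus\langle -2\rangle^{\oplus t_-}$ already works, so no parity adjustments with $U(2)$ are needed.
\item The equality case is a check of the conditions of \cite[Theorem 1.10.1]{Nikulin1980-nm} for the orthogonal complement.
\end{itemize}

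Two corrections to the equality case.

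First, the complement $T$ has determinant $(-1)^{l_--t_-}|A_S|$, not $(-1)^{l_-}|A_S|$. With the wrong exponent the check would genuinely fail. With the correct sign, your parity fact $l_+-t_+-t_-=\tfrac12(l_+-l_-)\equiv 0 \pmod 4$ gives $(-1)^{l_--t_-}=(-1)^{t_+}=(-1)^{\rk S}(-1)^{t_-}$. This is exactly the sign of $\det S(-1)$, as required.

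Second, the step you flag as the main obstacle, including the prime $p=2$, disappears if you organize it as follows. Write $l_+-l_-=8k$. The required signature of $T$ is $(l_+-t_+,\,l_--t_-)=(t_-+4k,\,t_+-4k)$, while $S(-1)$ realizes signature $(t_-,t_+)$ with the same discriminant form $-q_S$. The conditions of Nikulin's existence theorem involve only:
\begin{itemize}
\item the rank;
\item the signature modulo $8$;
\item the parity of the negative index;
\item nonnegativity of the indices;
\item the discriminant form.
\end{itemize}
All of these agree for the two sets of invariants, with nonnegativity being your condition (2). Since $S(-1)$ exists, so does $T$, with no separate $2$-adic bookkeeping. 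Your detour through the overlattice of $S\oplus S(-1)$ is harmless: it is just this witness.

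Your proposed fallback of replacing ``the $(t_-,t_+)$ part'' of $S(-1)\otimes\mathbb{Z}_p$ by a unimodular lattice is not meaningful. The signature is an archimedean invariant, and at the relevant primes $S\otimes\mathbb{Z}_p$ has no unimodular Jordan component at all. It is also not needed.
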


\begin{proof}[Proof of Proposition \ref{simplification}]
    The claim for the lattices of rank $\leq 9$ follows from Theorem \ref{rk9pol} by embedding into the unimodular lattice $U\oplus E_8^{\oplus2}$. By Theorem \ref{laticcethm} and $U\oplus E_8^{\oplus2}$ again, we can verify by inspecting Table \ref{table2} that the claim holds for every lattice except possibly the following: $U\oplus E_8 \oplus A_1^{\oplus 4}$, $U\oplus E_8 \oplus D_4 \oplus A_1$, $U\oplus E_8\oplus D_6$, $U\oplus E_8 \oplus E_7$, $U\oplus D_8 \oplus A_1 ^{\oplus3}$, $U\oplus D_8 \oplus D_4$, $U\oplus D_4 \oplus A_1 ^{\oplus 5}$, $U\oplus D_4^{\oplus 2} \oplus A_1^{\oplus2}$, $U\oplus A_1^{\oplus8}$.

    For the remaining lattices we argue on a case by case basis. By \cite{oguisoshioda} the lattices $A_1^{\oplus 4}$, $D_4 \oplus A_1$, $D_6$ and $E_7$ can be embedded primitively into $E_8$. Therefore, from the remaining lattices all except $U\oplus D_8 \oplus A_1 ^{\oplus3}$ and $U\oplus D_8 \oplus D_4$ embed primitively into $U\oplus E_8^{\oplus2}$. The first lattice embeds primitively into the second since there exists a primitive embedding of $A_1 ^{\oplus3}$ into $D_4$ by sending the three basis elements of $A_1^{\oplus3}$ into the three orthogonal basis elements of $D_4$. We claim that $U\oplus D_8 \oplus D_4$ embeds primitively into $U\oplus E_8^{\oplus2}\oplus A_1$. Indeed, by inspecting the quadratic forms and using \cite[Corollary 1.6.1]{Nikulin1980-nm}, $D_4$ embeds primitively into $E_8$ with orthogonal complement $D_4$ and $D_8$ embeds primitively into $U\oplus E_8$ with orthogonal complement $U(2)$. Thus, $D_4\oplus D_8$ embeds primitively into $U\oplus E_8^{\oplus 2} \oplus A_1$ with orthogonal complement $U(2)\oplus D_4\oplus A_1$. It suffices to show that the latter lattice contains $U$. By \cite[Theorem 3.6.2]{Nikulin1980-nm} the genus of $2$-elementary lattices is determined by the invariants $(t_+,t_-,\alpha,\delta)$, where $(t_+,t_-)$ is the signature, $\alpha$ is the natural number such that $\vert\mathrm{disc}(\Lambda)\vert=2^\alpha$ and $\delta \in \{0,1\}$ is such that it is equal to $0$ if $x^2 \in 2\mathbb{Z}$ for every $x\in \Lambda^*$, and $1$ otherwise. One sees that the lattices $U(2)\oplus D_4\oplus A_1$ and $U\oplus A_1^{\oplus5}$ have the same invariants $(1,6,5,1)$ and thus are in the same genus. But $U\oplus A_1^{\oplus5}$ is unique in its genus since it contains $U$ (\cite[Corollary 1.13.4]{Nikulin1980-nm}), hence the two lattices are isomorphic and we conclude.
\end{proof}

We end this subsection with the following important observation.

\begin{lemma}\label{primitivepolarization}
    Let $\Lambda$ be an even hyperbolic lattice and $(X,j)$ a $\Lambda$-primitive K3 surface. Then there exists a quasipolarization $\tilde{j}\colon \Lambda \to \Pic(X)$.
\end{lemma}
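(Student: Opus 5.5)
We need to show: given a primitive embedding $j\colon\Lambda\hookrightarrow\Pic(X)$ with $\Lambda$ hyperbolic, there is a (possibly different) primitive embedding $\tilde j$ whose image contains a big and nef class. The plan is to modify $j$ by an isometry of $\Pic(X)$ that preserves primitivity, namely an element of $\{\pm 1\}\times W$, where $W$ is the Weyl group generated by reflections in $-2$ classes. We set $\tilde j = w\circ(\pm j)$. Since $w$ is an isometry of $\Pic(X)$, the image of $\tilde j$ is still a primitive sublattice, so $\tilde j$ is again a primitive lattice embedding.

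First, because $\Lambda$ is hyperbolic of signature $(1,\rk\Lambda-1)$, it contains a vector $\lambda$ with $\lambda^2>0$. Since $\Lambda$ is a lattice, the set of $-2$ vectors in $\Lambda$ is discrete, so the orthogonal hyperplanes $\delta^\perp$ for $\delta\in\Lambda$ with $\delta^2=-2$ are locally finite in the positive cone of $\Lambda_{\mathbb R}$. Hence we may choose $\lambda\in\Lambda$ with $\lambda^2>0$ and $\lambda\cdot\delta\neq0$ for all such $\delta$; perturbing within an open cone of $\Lambda_{\mathbb{R}}$ and then clearing denominators keeps $\lambda$ integral. We will, however, need genericity with respect to all $-2$ classes of $\Pic(X)$, not only those in $\Lambda$. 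Since $\Lambda_{\mathbb R}$ may lie entirely inside some walls of $\Pic(X)_{\mathbb R}$, we cannot avoid them, and we do not need to.

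Next, $h\coloneqq j(\lambda)$ satisfies $h^2>0$. By Riemann--Roch on the K3 surface $X$, either $h$ or $-h$ is effective, so replacing $j$ by $-j$ if necessary we may take $h$ in the positive cone containing the ample classes. Then we use the standard fact, valid on K3 surfaces in any characteristic, that the nef cone intersected with the positive cone is a fundamental domain for the action of $W$ on the positive cone. The proof is the usual one: if $h\cdot C<0$ for some smooth rational curve $C$, replace $h$ by $s_C(h)=h+(h\cdot C)C$. This strictly decreases $h\cdot H$ for a fixed ample $H$, since $h\cdot H$ is a positive integer, so the process terminates. This gives $w\in W$ with $w(h)$ nef, and $w(h)^2=h^2>0$, so $w(h)$ is big. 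Setting $\tilde j=w\circ(\pm j)$, the class $\tilde j(\lambda)=w(h)$ is big and nef, as required.

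The main obstacle is making sure the reflection argument works in positive characteristic. It uses only the facts that every $-2$ class $\delta$ on $X$ has $\delta$ or $-\delta$ effective (Riemann--Roch), that irreducible curves of negative self-intersection are smooth rational $-2$ curves, and that a class in the positive cone meeting every $-2$ curve nonnegatively is nef. All of these follow from adjunction and Riemann--Roch on a K3 surface and hold over any algebraically closed $k$. Hence the genericity of $\lambda$ chosen in the first step is not actually needed for nefness. It would only matter if ampleness were required, which it is not here.
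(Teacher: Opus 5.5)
Your proof is correct and follows the paper's argument. You choose $\lambda\in\Lambda$ with $\lambda^2>0$, fix the sign so that $j(\lambda)$ lies in the positive cone, and compose with an element of $W$ that moves $j(\lambda)$ into the nef cone, which preserves primitivity. Your check that the reflection algorithm works in positive characteristic adds a detail the paper omits, while the genericity discussion in your second paragraph is, as you note yourself, unnecessary and can be dropped.
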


\begin{proof}
    As $\Lambda$ is hyperbolic, there exists an $h\in \Lambda$ with $h^2>0$, which after possibly multiplying by $-1$ we can assume that $j(h)$ lies in the positive cone of $X$. Since the nef cone is a fundamental domain for the action of $W$ on the positive cone of $X$, there exist rational curves $C_1, \dots, C_n$ such that $s_{C_1}\circ \dots \circ s_{C_n}(j(h))$ is nef, where $s_{C_i}$ denotes the reflection along $[C_i]$. Then the primitive embedding $\tilde{j} \coloneqq s_{C_1}\circ \dots \circ s_{C_n} \circ j $ is a quasipolarization.
\end{proof} 

In particular, the following is immediate.

\begin{cor}\label{latticequasipol}
    Let $X$ be a $U\oplus E_8^{\oplus 2}\oplus A_1$-primitive K3 surface. Then $X$ is $\Lambda$-quasipolarized for every $\Lambda$ in Table \ref{table1}.
\end{cor}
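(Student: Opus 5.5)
The plan is to compose two primitive embeddings and then apply Lemma \ref{primitivepolarization}. Let $(X,\iota)$ be the given structure, with $\iota\colon U\oplus E_8^{\oplus 2}\oplus A_1\hookrightarrow \Pic(X)$ a primitive lattice embedding. Let $\Lambda$ be a lattice in Table \ref{table1}.

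\textbf{Step 1: a primitive embedding of $\Lambda$ into $\Pic(X)$.} By Proposition \ref{simplification}, there is a primitive embedding $e\colon \Lambda\hookrightarrow U\oplus E_8^{\oplus 2}\oplus A_1$. The composite $j\coloneqq \iota\circ e\colon \Lambda\hookrightarrow \Pic(X)$ is an isometric embedding. It is also primitive, because a composite of primitive embeddings is primitive. To see this, suppose $x\in\Pic(X)$ and $nx\in j(\Lambda)$ for some $n>0$. Then $nx$ lies in $\iota(U\oplus E_8^{\oplus2}\oplus A_1)$, so primitivity of $\iota$ gives $x=\iota(y)$ for some $y$. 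Injectivity of $\iota$ then gives $ny\in e(\Lambda)$, and primitivity of $e$ gives $y\in e(\Lambda)$. Hence $x\in j(\Lambda)$, and $(X,j)$ is $\Lambda$-primitive.

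\textbf{Step 2: upgrade to a quasipolarization.} Every lattice in Table \ref{table1} is even and hyperbolic, being the Picard lattice of a complex K3 surface of signature $(1,\rho-1)$ with $\rho\geq 1$. Lemma \ref{primitivepolarization} therefore gives $\tilde j=s_{C_1}\circ\dots\circ s_{C_n}\circ j$ with $\tilde j(\Lambda)$ containing a nef class $\tilde j(h)$ with $h^2>0$. Such a class is big and nef. Moreover, $\tilde j$ is still a primitive embedding, since each reflection $s_{C_i}$ is an isometric automorphism of $\Pic(X)$ and automorphisms preserve primitivity. Hence $(X,\tilde j)$ is $\Lambda$-quasipolarized.

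\textbf{Main obstacle.} There is essentially none: the statement is formal once Proposition \ref{simplification} and Lemma \ref{primitivepolarization} are available. The only care needed is checking that primitivity survives both composition and the reflections, and that nef together with positive self-intersection means big on a K3 surface.
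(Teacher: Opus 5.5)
Your proposal is correct and matches the paper's argument: compose the given primitive embedding with the one from Proposition \ref{simplification}, then apply Lemma \ref{primitivepolarization}, which the paper treats as immediate. The extra checks you supply are exactly the details the paper leaves implicit: primitivity is preserved under composition and under the reflections, and a nef class with positive square is big.
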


\subsection{Construction}

In view of Corollary \ref{latticequasipol}, we need only exhibit an ordinary $U\oplus E_8^{\oplus 2}\oplus A_1$-primitive K3 surface over $\bFFp$ for every prime $p$. 
\begin{proposition}\label{singular-existence}
    There exists an ordinary $U\oplus E_8^{\oplus2} \oplus A_1$-primitive K3 surface over $\overline{\mathbb{F}}_p$.
\end{proposition}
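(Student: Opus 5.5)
We will use singular K3 surfaces over number fields and reduce them modulo primes, as announced in the introduction. The lattice $U\oplus E_8^{\oplus2}\oplus A_1$ has rank $19$. It embeds primitively into a rank $20$ lattice of the form $U\oplus E_8^{\oplus 2}\oplus \langle -2n\rangle$-type extensions; more concretely, $U\oplus E_8^{\oplus2}\oplus A_1^{\oplus 2}$ and $U\oplus E_8^{\oplus2}\oplus A_2$ both contain $U\oplus E_8^{\oplus2}\oplus A_1$ primitively. (In the $A_2$ case, take a root $e$; then $\mathbb{Z}e$ is primitive in $A_2$.) So it suffices to find, for each $p$, a K3 surface over $\bFFp$ that is ordinary and whose Picard lattice contains one of these rank $20$ lattices primitively.

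\textbf{Step 1: choose singular K3 surfaces over number fields.} By Shioda--Inose, there are complex K3 surfaces $Y_1$ with $\Pic(Y_1)\cong U\oplus E_8^{\oplus 2}\oplus A_1^{\oplus2}$, with transcendental lattice of discriminant $4$ and CM by $\mathbb{Q}(i)$. There are also $Y_2$ with $\Pic(Y_2)\cong U\oplus E_8^{\oplus2}\oplus A_2$, with transcendental lattice of discriminant $3$ and CM by $\mathbb{Q}(\sqrt{-3})$. Examples are the Kummer-type surfaces attached to $E_i\times E_i$ and $E_\omega\times E_\omega$. Singular K3 surfaces are defined over number fields together with all of their Picard group, so we may spread $Y_i$ out to a smooth projective model over a ring of integers localized away from finitely many primes.

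\textbf{Step 2: reduce at a prime of good reduction.} For a prime $\mathfrak{P}$ of good reduction, the specialization map of Proposition \ref{specializationmap} gives an injective isometry $\Pic(Y_{i,\bar\eta})\hookrightarrow\Pic(Y_{i,\bar{\mathfrak{P}}})$. The reduction has rank $20$ or $22$. By the theory of Shimada/Schütt for singular K3 surfaces with CM by $\mathbb{Q}(\sqrt{-d})$, the reduction is ordinary precisely when $p$ splits in $\mathbb{Q}(\sqrt{-d})$. In that case the rank is $20$ and, since the height is $1$, Lemma \ref{tfreelemma} makes the specialization primitive. Thus $\Pic$ of the reduction contains $U\oplus E_8^{\oplus2}\oplus A_1$ primitively.

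Every prime $p$ splits in either $\mathbb{Q}(i)$ or $\mathbb{Q}(\sqrt{-3})$ unless $p\equiv 3\pmod 4$ and $p\equiv 2\pmod 3$, i.e. $p\equiv 11\pmod{12}$. It also fails for $p=2,3$. For the remaining primes we add further CM surfaces: for each $p$ choose an imaginary quadratic field $\mathbb{Q}(\sqrt{-d})$ in which $p$ splits. We then need a singular K3 surface with CM by that field and $\Pic\supseteq U\oplus E_8^{\oplus2}\oplus A_1$ primitively. Such a surface exists by taking transcendental lattice $T$ positive definite of rank $2$ with $T\oplus$ complement inside $\Lambda_{K3}$, for example $T=\langle 2\rangle\oplus\langle 2n\rangle$. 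The orthogonal complement of $T$ then contains $U\oplus E_8^{\oplus2}\oplus\langle -2n\rangle\oplus\ldots$. Concretely, $\Pic = U\oplus E_8^{\oplus2}\oplus\langle -2\rangle\oplus\langle -2n\rangle$ has transcendental lattice $\langle 2\rangle\oplus\langle 2n\rangle$ with CM field $\mathbb{Q}(\sqrt{-n})$, and we choose $n$ with $p$ split, which is always possible, including for $p=2,3$ (e.g. $n=7$ for $p=2$, $n=2$ for $p=3$). We also need good reduction at some prime above $p$. This is arranged by using Kummer/Shioda--Inose models over number fields, which have potentially good reduction at primes where the CM elliptic curves have ordinary good reduction.

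The main obstacle is the ordinarity criterion together with good reduction at $p$. We would invoke the Shioda--Inose structure: $Y$ is related to $E\times E'$ with $E,E'$ CM elliptic curves. Then $Y$ mod $\mathfrak{P}$ is ordinary if and only if $E$ mod $\mathfrak{P}$ is ordinary, by comparing the formal Brauer group with the transcendental part of $H^2$ of the abelian surface. The latter holds if and only if $p$ splits in the CM field, by Deuring.
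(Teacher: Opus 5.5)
Your strategy is the paper's: reduce singular K3 surfaces with CM by $\mathbb{Q}(\sqrt{-n})$ at a prime above a split $p$. However, two steps do not go through as written.

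First, you cannot invoke Lemma \ref{tfreelemma} for primitivity. It concerns a complete DVR of \emph{equicharacteristic} $p$, and its proof uses the Frobenius $x\mapsto x^p$ of the base ring, which does not exist on $\mathcal{O}_{K_v}$. In mixed characteristic (possibly ramified, after the extension needed for good reduction) $p$-torsion in the cokernel of $\overline{\mathrm{sp}}$ can occur. That is why the paper uses only Lemma \ref{jangcokernel} (no prime-to-$p$ torsion) there, and excludes $p$-torsion separately via $|\mathrm{disc}\,\Pic(\calX_{\bar\eta})|=|\mathrm{disc}\,\Pic(\calX_{\bar v})|\cdot[\Pic(\calX_{\bar v}):\Pic(\calX_{\bar\eta})]^2$ with $p\nmid\mathrm{disc}$. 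This step is repairable, and in fact you do not need primitivity of $\overline{\mathrm{sp}}$ at all. Since $U\oplus E_8^{\oplus2}$ is unimodular, it splits off as an orthogonal summand of $\Pic(\calX_{\bar v})$. A $(-2)$-vector in an even lattice is primitive. Hence any isometric copy of $U\oplus E_8^{\oplus2}\oplus A_1$ in $\Pic(\calX_{\bar v})$ is automatically primitive.

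Second, and this is the real gap, you never establish good reduction at a prime above $p$, as you acknowledge yourself. Spreading out gives good reduction only outside an unspecified finite set of primes, which may contain $p$. For $p\ge5$, Theorem \ref{matsumotoreduction} supplies it; you should also take $p\nmid n$ (e.g.\ $n=p-1$) so that Theorem \ref{supersingularreduction} applies. For $p=2,3$, however, the claim that Kummer/Shioda--Inose models reduce well wherever the CM elliptic curves do is not a result you can cite. Good reduction of $\mathrm{Km}(A)$ does not automatically pass to its Shioda--Inose partner, and in characteristic $2$ the involution-based constructions do not carry over directly. Moreover, your choice $n=7$ for $p=2$ gives $\mathrm{disc}\,T=28$. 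Then $2\mid\mathrm{disc}\,\Pic$, so Ito's criterion, which requires $p\nmid\mathrm{disc}\,\Pic(X)$, is unavailable. Your Deuring-type comparison presupposes that the whole Shioda--Inose correspondence has good reduction at $2$.

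The paper closes exactly this gap with explicit integral models. For $p=2$ it takes $T=\begin{psmallmatrix}2&1\\1&4\end{psmallmatrix}$ (discriminant $7$, CM by $\mathbb{Q}(\sqrt{-7})$), realized by the elliptic modular surface $S(\Gamma_1(7))$, which has a smooth model over $\mathbb{Z}[1/7]$. For $p=3$ it takes $S(\Gamma_1(8))$, which has exactly the Picard lattice of your $n=2$ surface and a smooth model over $\mathbb{Z}[1/8]$.

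A minor point: Kummer surfaces of $E_i\times E_i$ and $E_\omega\times E_\omega$ have transcendental lattices of discriminant $16$ and $12$. Your $Y_1,Y_2$ are their Shioda--Inose partners, not the Kummer surfaces themselves.
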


To prove Proposition \ref{singular-existence}, we will construct for each prime $p$ a suitable $U\oplus E_8^{\oplus2} \oplus A_1$-primitive K3 surface over a number field and then reduce it modulo $p$.

\subsubsection{Singular complex K3 surfaces}
We first recall some special properties of singular complex K3 surfaces, i.e. complex K3 surfaces of Picard rank $20$.

\begin{theorem}\cite[Theorem 4]{Shioda_Inose_1977}\label{si bijection}
    The map $X \mapsto T(X)$ defines a bijection from the set of isomorphism classes of singular complex K3 surfaces to the set of isomorphism classes of positive definite, even, oriented lattices of rank 2.
\end{theorem}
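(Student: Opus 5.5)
The plan is to deduce the bijection from the Global Torelli Theorem and the surjectivity of the period map for complex K3 surfaces, combined with Nikulin's results on primitive embeddings. Let $X$ be a singular complex K3 surface. Then $T(X)=\NS(X)^\perp\subset H^2(X,\mathbb{Z})$ has rank $2$. Since $H^{2,0}(X)=\mathbb{C}\omega$ lies in $T(X)\otimes\mathbb{C}$ and $T(X)$ is defined over $\mathbb{Z}$, we get
\[T(X)\otimes\mathbb{C}=\mathbb{C}\omega\oplus\mathbb{C}\bar\omega,\]
so $T(X)\otimes\mathbb{R}$ is spanned by $\mathrm{Re}\,\omega$ and $\mathrm{Im}\,\omega$. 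The relations $\omega^2=0$ and $\omega\bar\omega>0$ show that $T(X)$ is positive definite, and it is even because $H^2(X,\mathbb{Z})$ is. I orient $T(X)$ by declaring the basis $(\mathrm{Re}\,\omega,\mathrm{Im}\,\omega)$ positive. This orientation does not depend on the choice of $\omega$: rescaling $\omega$ by $c\in\mathbb{C}^\times$ acts on this basis by a rotation-similarity of positive determinant. So the map $X\mapsto T(X)$ is well defined.

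For \emph{injectivity}, suppose $\psi\colon T(X)\to T(X')$ is an orientation-preserving isometry. The first step is to check that $\psi\otimes\mathbb{C}$ sends $\mathbb{C}\omega$ to $\mathbb{C}\omega'$, so that $\psi$ is a Hodge isometry. In the positive definite plane $T\otimes\mathbb{R}$, the isotropic lines of $T\otimes\mathbb{C}$ are exactly the two lines $\mathbb{C}(e_1\pm ie_2)$ for an oriented orthonormal basis $(e_1,e_2)$, and the orientation singles out one of them. Hence an orientation-preserving isometry matches the correct isotropic line. The second step is to extend $\psi$ to an isometry of the full K3 lattice $H^2(X,\mathbb{Z})\to H^2(X',\mathbb{Z})$. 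I would use Nikulin's uniqueness theorem \cite[Theorem 1.14.4]{Nikulin1980-nm}: since $\rk T=2$ and $l(A_T)\le 2$, the complement has rank $20\ge \rk T+l(A_T)+2$. Therefore the primitive embedding $T\hookrightarrow \Lambda_{K3}$ is unique up to $\mathrm{O}(\Lambda_{K3})$, and every isometry of $T$ extends. The extension is automatically a Hodge isometry. The third step is to compose it on $\NS(X')$ with $\pm1$ and elements of the Weyl group $W$. These act trivially on $T(X')$ modulo $\pm1$, so the composite stays Hodge, and we can arrange that it maps the ample cone to the ample cone. The Global Torelli Theorem then gives $X\cong X'$.

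For \emph{surjectivity}, let $T$ be a positive definite, even, oriented lattice of rank $2$. By Theorem \ref{rk9pol}, since $2\le 11$, $T$ embeds primitively into $\Lambda_{K3}=U^{\oplus3}\oplus E_8(-1)^{\oplus2}$. Take an oriented orthogonal basis $(e_1,e_2)$ of $T\otimes\mathbb{R}$ with $e_1^2=e_2^2$ and set $\omega=e_1+ie_2$. Then $\omega^2=0$ and $\omega\bar\omega>0$. By surjectivity of the period map there is a marked K3 surface $X$ with period $\mathbb{C}\omega$. Its Néron–Severi group is
\[\NS(X)=\omega^\perp\cap\Lambda_{K3}=T^\perp,\]
which has rank $20$, so $X$ is singular with $T(X)=T$ as oriented lattices. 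As a cross-check, and as an alternative route, one could follow Shioda--Inose's explicit construction via $\mathrm{Km}(E\times E')$ and a double cover, but the Torelli route is cleaner.

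The main obstacle I expect is the extension step in the injectivity argument. It requires the uniqueness of primitive embeddings of $T$ into $\Lambda_{K3}$, which is really a uniqueness-in-genus statement for $\NS(X)$ that I would cite from Nikulin. It also requires a careful check that the orientation convention exactly matches Hodge isometries, as opposed to isometries up to complex conjugation.
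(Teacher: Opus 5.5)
Your argument is correct, but it does not follow the route behind the citation. The paper proves nothing here and simply quotes \cite[Theorem 4]{Shioda_Inose_1977}.

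\textbf{Injectivity.} In the source, injectivity likewise rests on the Torelli theorem of Piatetski-Shapiro--Shafarevich, once an isometry of transcendental lattices has been extended to all of $H^2$. Your Steps 1--3 do exactly this, using Nikulin's uniqueness of primitive embeddings for the extension.

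\textbf{Surjectivity.} Here the routes differ: Shioda--Inose argue constructively rather than through the surjectivity of the period map. By Shioda--Mitani, the given oriented form is the transcendental lattice of a product $E_1\times E_2$ of elliptic curves with complex multiplication. The Kummer surface $\mathrm{Km}(E_1\times E_2)$ then has transcendental lattice equal to that form scaled by $2$. They construct $X$ related to $\mathrm{Km}(E_1\times E_2)$ by a rational map of degree $2$, arranged so that $T(X)$ is the form itself.

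\textbf{Comparison.} Your approach is shorter and purely lattice-theoretic, but it produces $X$ only as an abstract complex surface. The explicit construction shows that every singular K3 surface comes from CM elliptic curves. This is what yields Theorem \ref{si fieldofdefinition} (models over number fields), which the paper needs later for reduction modulo $p$.

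\textbf{Two additions to your write-up.} First, the period map only gives a compact complex K3 surface. You should note that $\NS(X)=T^\perp$ has signature $(1,19)$, so it contains a class of positive square, which makes $X$ projective as the paper's definition requires. Second, in Step 3 the sign change should be made explicit as $-\mathrm{id}$ on all of $H^2(X',\mathbb{Z})$. A sign change on $\NS(X')$ alone need not extend to $H^2(X',\mathbb{Z})$.
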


Here, an orientation of a rank $2$ lattice is the choice of an equivalence class of bases, where two bases are equivalent if and only if their change of basis matrix is in $\mathrm{SL}(2,\mathbb{Z})$.

\begin{theorem}\cite[Theorem 6]{Shioda_Inose_1977}\label{si fieldofdefinition}
    Every singular complex K3 surface has a model defined over a number field.
\end{theorem}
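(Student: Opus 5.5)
The plan is to use a rigidity argument in moduli, avoiding explicit equations. Let $X$ be a singular complex K3 surface and fix an ample class $H\in\Pic(X)$ with $H^2=2d$. Then $X$ gives a $\mathbb{C}$-point of the moduli stack $M_{2d}$ of polarized K3 surfaces. This stack is of finite type over $\mathbb{Q}$ (indeed over $\mathbb{Z}$), with a quasi-projective coarse space. Set $N\coloneqq\Pic(X)$, a lattice of rank $20$ containing $H$. Consider the substack $M_{N}\subseteq M_{2d}$ parametrizing polarized K3 surfaces $(Y,H_Y)$ together with an embedding $N\hookrightarrow\Pic(Y)$ sending $H$ to $H_Y$. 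This substack is defined over $\mathbb{Q}$: the condition that a given class is algebraic is cut out by the relative Picard scheme, which is unramified over the base. So $M_N$ is a finite union of locally closed substacks, and its forgetful image in $M_{2d}$ is constructible and defined over $\mathbb{Q}$.

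The key step is to show that $M_N$ has dimension $0$. By Theorem~\ref{prorep}, transported to characteristic $0$ via the analogous statement, the lattice-polarized deformation space of $X$ has dimension $20-\rk N=0$, so every point of $M_N$ is isolated. Alternatively one can argue globally with Theorem~\ref{si bijection}. Singular complex K3 surfaces form a countable set, since they correspond to lattices. Each such surface carries only finitely many polarizations of degree $2d$ up to automorphism. Hence $M_N(\mathbb{C})$ is countable, and a complex variety with countably many points is zero-dimensional.

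A zero-dimensional scheme of finite type over $\mathbb{Q}$ has all of its $\mathbb{C}$-points defined over $\overline{\mathbb{Q}}$. Hence the point $[(X,H)]$ lies over a point of the coarse space with residue field a number field $K_0$. I then need to pass from the coarse space to an actual model. Polarized K3 surfaces have finite automorphism groups, so $M_{2d}$ is a separated Deligne--Mumford stack. Therefore the residual gerbe at this point becomes neutral after a finite extension $K/K_0$. This gives a polarized K3 surface $(X_K,H_K)$ over the number field $K$ with $X_K\otimes_K\mathbb{C}\cong X$. A more elementary route to the same conclusion is to embed $X\subseteq\mathbb{P}^N$ by $3H$. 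One then observes that $X$ lies in a zero-dimensional $\mathrm{PGL}$-orbit space inside the Hilbert scheme, so $X$ is isomorphic to each of its conjugates under $\mathrm{Aut}(\mathbb{C}/\overline{\mathbb{Q}})$, and a standard descent argument shows that a variety with countably many conjugates is defined over $\overline{\mathbb{Q}}$.

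I expect the main obstacle to be the dimension-zero claim together with the justification that $M_N$ is defined over $\mathbb{Q}$. In particular one must make sure that the countability argument rules out positive-dimensional isotrivial families. Isotriviality is harmless here, because an isotrivial family maps to a single point of the coarse space. As a fallback, I would use the explicit Shioda--Inose construction. One chooses CM elliptic curves $E_1,E_2$ whose $j$-invariants are algebraic, so that $T(E_1\times E_2)=T(X)$. The Kummer surface $\mathrm{Km}(E_1\times E_2)$ is then defined over a number field, and $X$ is recovered as the double cover branched along the Nikulin involution coming from an elliptic fibration with two $II^*$ fibres. All the data in that construction (the fibration, the sections, and the branch divisor) are defined over a finite extension.
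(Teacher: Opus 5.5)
Your main argument is correct, and it takes a genuinely different route from the one behind the citation. The paper gives no proof of its own; it relies on Shioda--Inose, whose argument is constructive. By the bijection of Theorem~\ref{si bijection}, it suffices to produce, for each positive definite even oriented binary lattice $T$, one surface over a number field with transcendental lattice $T$. They build it from the Kummer surface $\mathrm{Km}(E_1\times E_2)$ of a product of CM elliptic curves with $T(E_1\times E_2)\cong T$; the $j$-invariants of these curves are algebraic, so every step of the construction takes place over a number field. In Shioda--Inose the same construction also supplies the surjectivity in Theorem~\ref{si bijection}.

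That construction is essentially your fallback. One correction to how you phrased it: the Nikulin involution lives on $X$. After blowing up its eight fixed points, $X$ is the double cover of the Kummer surface branched along eight disjoint smooth rational curves; a double cover is not branched ``along an involution''.

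Your main route replaces CM theory and Torelli by rigidity in a moduli stack of finite type over $\mathbb{Q}$. Its real content is the vanishing of the tangent space of $\Def_{(X,N)}$. Rather than ``transporting'' Theorem~\ref{prorep}, which is a characteristic~$p$ statement requiring finite height, argue directly in characteristic~$0$:
\begin{itemize}
\item via $T_X\cong\Omega^1_X$ and Serre duality, the tangent space is the orthogonal complement of $c_1(N)\otimes\mathbb{C}$ in $H^1(X,\Omega^1_X)$;
\item this complement is zero because $c_1$ is injective on $\NS(X)\otimes\mathbb{C}$ by Hodge theory and $\rk N=h^{1,1}=20$.
\end{itemize}
A zero tangent space already forces the complete local ring to be $\mathbb{C}$. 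So no unobstructedness is needed, and your countability argument (which would lean on Theorem~\ref{si bijection}) becomes superfluous.

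As for what each approach buys: yours is soft and general. It shows that any object rigid in a finite-type moduli problem over $\mathbb{Q}$ is defined over $\overline{\mathbb{Q}}$, but it says nothing about which number field occurs. The Shioda--Inose construction instead gives an explicit model whose field of definition is tied to the CM $j$-invariants.
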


In view of Theorem \ref{si fieldofdefinition}, the strategy for proving Proposition \ref{singular-existence} is to first construct a complex $U\oplus E_8^{\oplus 2} \oplus A_1$-primitive K3 surface using Theorem \ref{si bijection} and then reduce it modulo $p$.

\begin{lemma}\label{k3charnot2}
    Let $\alpha$ be a positive integer. There exists a complex K3 surface $X$ with $\Pic(X) \cong U\oplus E_8 ^{\oplus 2} \oplus A_1 \oplus \mathbb{Z}(-2\alpha)$.
\end{lemma}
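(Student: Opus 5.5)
Via the surjectivity of the period map for complex K3 surfaces: a primitive embedding of $N \coloneqq U\oplus E_8^{\oplus 2}\oplus A_1\oplus \mathbb{Z}(-2\alpha)$ into the K3 lattice $\Lambda_{K3}\coloneqq U^{\oplus 3}\oplus E_8^{\oplus 2}$ will give such an $X$. Since $N$ has rank $20$, it is the Picard lattice of a singular K3 surface, so I can equivalently use Theorem \ref{si bijection}. The plan is to find a positive definite even rank-2 lattice $T$ whose orthogonal complement in $\Lambda_{K3}$ is $N$, then take the singular K3 surface $X$ with $T(X)\cong T$.

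\emph{Construction of the embedding.} Write $\Lambda_{K3}=U\oplus E_8^{\oplus 2}\oplus U\oplus U$. The first summands $U\oplus E_8^{\oplus2}$ embed identically, so it remains to embed $A_1\oplus\mathbb{Z}(-2\alpha)$ primitively into $U\oplus U$. Let $e_i,f_i$ be standard bases of the two copies of $U$, with $e_i^2=f_i^2=0$ and $e_if_i=1$. I send the generators to $x=e_1-f_1$ and $y=e_2-\alpha f_2$, so that $x^2=-2$, $y^2=-2\alpha$ and $xy=0$. Since $x$ and $y$ live in different summands and each is primitive there (the coefficient of $e_i$ is $1$), the span $\langle x,y\rangle$ is primitive in $U\oplus U$.

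\emph{The orthogonal complement.} The complement of $\langle x, y\rangle$ in $U\oplus U$ is spanned by $e_1+f_1$, of square $2$, and $e_2+\alpha f_2$, of square $2\alpha$. So $T=\mathbb{Z}(2)\oplus\mathbb{Z}(2\alpha)$, which is positive definite and even of rank 2. Primitive sublattices of a unimodular lattice are mutually orthogonal complements, so $T^\perp=N$ in $\Lambda_{K3}$. The discriminant groups also match: $\mathbb{Z}/2\oplus\mathbb{Z}/2\alpha$ on both sides, as they must.

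\emph{Conclusion.} Choose an orientation of $T$. By Theorem \ref{si bijection} there is a singular complex K3 surface $X$ with $T(X)\cong T$. Then $\Pic(X)$ is the orthogonal complement of $T(X)$ in $H^2(X,\mathbb{Z})\cong\Lambda_{K3}$.

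\emph{Main obstacle.} The subtle point is that the embedding $T(X)\hookrightarrow H^2(X,\mathbb{Z})$ need not be the one constructed above. To handle this, I argue with discriminant forms instead of a specific embedding. Because $H^2$ is unimodular, $\Pic(X)$ is an even lattice of signature $(1,19)$ with discriminant form $-q_T$. The same holds for $N$, so $\Pic(X)$ and $N$ lie in the same genus. Since $N$ contains $U$ and $l(A_N)=2\le \rk N-2$, the uniqueness criterion \cite[Corollary 1.13.3]{Nikulin1980-nm} shows $N$ is unique in its genus. Hence $\Pic(X)\cong N$.

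Alternatively, the surjectivity of the period map applied directly to the period $T\otimes\mathbb{C}\ni\omega$ with $\omega^2=0$ and $\omega\bar\omega>0$ gives $\Pic(X)=N$ on the nose, avoiding the uniqueness step.
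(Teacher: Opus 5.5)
Your proof is correct and is essentially the paper's argument. You take the singular K3 surface with $T(X)\cong\mathbb{Z}(2)\oplus\mathbb{Z}(2\alpha)$ from Theorem \ref{si bijection}, use unimodularity of $H^2(X,\mathbb{Z})$ to get $q_{\Pic(X)}=-q_{T(X)}=q_N$, and conclude by uniqueness in the genus; the paper cites \cite[Corollary 14.3.6]{huyk3} for that last step where you cite Nikulin, and your explicit embedding into $U\oplus U$ is not needed for this route, though it does justify the alternative period-map argument you sketch at the end.
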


\begin{proof}
By Theorem \ref{si bijection}, there exists a complex K3 surface $X$ with $T(X)=\mathbb{Z}(2)\oplus\mathbb{Z}(2\alpha)$. The discriminant form of $\Pic(X)$ and $\rho(X)$ uniquely determine its isomorphism type (\cite[Corollary 14.3.6]{huyk3}). Let $\Lambda\coloneqq U\oplus E_8 ^{\oplus 2} \oplus A_1 \oplus \mathbb{Z}(-2
\alpha)$. Since $H^2(X,\mathbb{Z})$ is unimodular, \cite[Corollary 1.6.1]{Nikulin1980-nm} implies that $(A_{\Pic(X)}, q_{\Pic(X)})=(A_{T(X)},-q_{T(X)})=(A_{\Lambda},q_{\Lambda})$ and the lemma follows by the above observation.
\end{proof}

\begin{lemma}\label{k3char2}
    Let $T$ be the even, positive definite lattice given by the Gram matrix 
$\begin{psmallmatrix}
    2 & 1\\
    1 & 4
\end{psmallmatrix}$. There exists a complex K3 surface $X$ with $\Pic(X)\cong U\oplus E_8^{\oplus 2}\oplus T(-1)$.
\end{lemma}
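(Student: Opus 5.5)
We follow the proof of Lemma \ref{k3charnot2}. First, Theorem \ref{si bijection} (with either orientation of $T$) gives a singular complex K3 surface $X$ with transcendental lattice $T(X)\cong T$. Here $T=\begin{psmallmatrix} 2 & 1\\ 1 & 4\end{psmallmatrix}$ is even and positive definite with determinant $7$. Set $\Lambda\coloneqq U\oplus E_8^{\oplus 2}\oplus T(-1)$. It is even, of signature $(1,19)$ and rank $20=\rho(X)$.

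Next we compare discriminant forms. Since $H^2(X,\mathbb{Z})$ is unimodular and $\Pic(X)=T(X)^\perp$ is primitive, \cite[Corollary 1.6.1]{Nikulin1980-nm} gives $(A_{\Pic(X)},q_{\Pic(X)})\cong(A_{T},-q_{T})$. On the other side, $U$ and $E_8$ are unimodular, so $(A_\Lambda,q_\Lambda)\cong(A_{T(-1)},q_{T(-1)})$. Scaling the form by $-1$ negates the discriminant quadratic form, and $A_{T(-1)}=A_T$ as groups, so $(A_{T(-1)},q_{T(-1)})=(A_T,-q_T)$. Hence the two discriminant forms agree.

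Finally, \cite[Corollary 14.3.6]{huyk3} says that $\Pic(X)$ is determined up to isomorphism by $\rho(X)$ and its discriminant form. Alternatively, Nikulin's uniqueness result applies: an even indefinite lattice containing $U$ with $l(A)\le \rk-2$ is unique in its genus. Since $\Lambda$ has the same rank, signature and discriminant form as $\Pic(X)$, we get $\Pic(X)\cong\Lambda$.

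There is no serious obstacle. The only point needing care is the sign bookkeeping for discriminant forms under the twist $(-1)$ and under passing to the orthogonal complement, and checking that the cited uniqueness statement applies. Here $A_\Lambda\cong\mathbb{Z}/7$ is cyclic and $\Lambda$ contains $U$, so uniqueness holds.
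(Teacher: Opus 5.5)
Your proposal is correct and follows the paper's route. The paper's proof just says to repeat the argument of Lemma~\ref{k3charnot2}: realize $T$ as $T(X)$ via Theorem~\ref{si bijection}, match the discriminant forms using \cite[Corollary 1.6.1]{Nikulin1980-nm}, and conclude with \cite[Corollary 14.3.6]{huyk3}. Your sign bookkeeping under the twist $T(-1)$ and your alternative uniqueness argument (here $A_\Lambda\cong\mathbb{Z}/7\mathbb{Z}$ and $\Lambda$ contains $U$, so Nikulin's genus-uniqueness applies directly) are both correct.
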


\begin{proof}
    Follows from the same reasoning as Lemma \ref{k3charnot2}.
\end{proof}

\begin{remark}
    The K3 surfaces in Lemma \ref{k3charnot2} and Lemma \ref{k3char2} are $U\oplus E_8^{\oplus 2} \oplus A_1$-primitive. For the ones from Lemma \ref{k3charnot2} the assertion is obvious and for the other one note that $A_1$ embeds primitively into $T(-1)$ by sending the basis element of $A_1$ to the basis element of $T(-1)$ with square $-2$.
\end{remark}

\subsubsection{Reduction of K3 surfaces}
We begin by recalling the definition of (potential) good reduction.

\begin{definition}
    Let $X$ be a K3 surface over a number field $K$ and $v$ be a finite place of $K$ of residue characteristic $p$. Denote by $K_v$ the completion of $K$ at $v$. We say that $X$ has 
    \begin{enumerate}
        \item good reduction at $v$ if there exists a proper smooth algebraic space $\calX$ over $\Spec(\mathcal{O}_{K_v})$ with generic fiber isomorphic to $X\times_K K_v$. 
        \item potential good reduction at $v$ if there exists a finite extension $L$ of $K$ such that $X\times_K L$ has good reduction at an extension of $v$ on $L$.
    \end{enumerate}
\end{definition}

We now recall the notion of K3 surfaces with complex multiplication.

Let $X$ be a complex K3 surface and consider the rational Hodge structure on $T(X)_{\mathbb{Q}}$ inherited from $H^2(X,\mathbb{Q})$. Let $\mathrm{End}_{\mathrm{Hdg}}(T(X)_{\mathbb{Q}})$ be the $\mathbb{Q}$-algebra of $\mathbb{Q}$-linear endomorphisms of $T(X)_{\mathbb{Q}}$ preserving the Hodge structure. One can prove that $\mathrm{End}_{\mathrm{Hdg}}(T(X)_{\mathbb{Q}})$ is a number field (see \cite[Corollary 3.3.6]{huyk3}).

A number field $F$ is called totally real if every embedding $F \hookrightarrow \mathbb{C}$ has image in $\mathbb{R}\subseteq \mathbb{C}$. A purely imaginary quadratic field extension $E$ of a totally real number field $F$ is called a complex multiplication (CM) field.

\begin{definition}
    Let $E$ be a CM field. A complex K3 surface $X$ has complex multiplication given by $E$ if $\mathrm{End}_{\mathrm{Hdg}}(T(X)_{\mathbb{Q}}) \cong E$ and $\mathrm{dim}_E T(X)_{\mathbb{Q}}=1$.
\end{definition}

\begin{proposition}
    A singular complex K3 surface $X$ has complex multiplication given by $\mathbb{Q}(\sqrt{-d})$, where $d \coloneqq \mathrm{disc}(T(X))$.
\end{proposition}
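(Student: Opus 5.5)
We need to show: for a singular complex K3 surface $X$, the algebra $E\coloneqq\mathrm{End}_{\mathrm{Hdg}}(T(X)_{\mathbb{Q}})$ is isomorphic to $\mathbb{Q}(\sqrt{-d})$ with $d=\mathrm{disc}(T(X))$, and that $\dim_E T(X)_{\mathbb{Q}}=1$. The plan is to compute $E$ directly from the period.

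\emph{Setting up the transcendental lattice.} Since $\rho(X)=20$, the lattice $T(X)$ has rank $2$ and is positive definite. Its complexification splits as
\[
T(X)_{\mathbb{C}}=H^{2,0}\oplus H^{0,2}=\mathbb{C}\sigma\oplus\mathbb{C}\bar\sigma.
\]
Choose a basis $e_1,e_2$ of $T(X)$ with Gram matrix $\begin{psmallmatrix}2a&b\\ b&2c\end{psmallmatrix}$, so that $d=4ac-b^2>0$. Write $\sigma=e_1+\tau e_2$, after rescaling. The condition $\sigma^2=0$ gives $2a+2b\tau+2c\tau^2=0$, so
\[
\tau=\frac{-b\pm\sqrt{-d}}{2c}\in\mathbb{Q}(\sqrt{-d})\setminus\mathbb{Q}.
\]
The condition $\sigma\bar\sigma>0$ selects the sign of the square root.

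\emph{Computing $E$.} A $\mathbb{Q}$-endomorphism $\phi$ of $T(X)_{\mathbb{Q}}$ preserves the Hodge structure exactly when $\phi_{\mathbb{C}}(\sigma)\in\mathbb{C}\sigma$; it then automatically preserves $\mathbb{C}\bar\sigma$ by complex conjugation. So $E$ is the set of rational $2\times2$ matrices having $(1,\tau)$ as an eigenvector. The map $\phi\mapsto$ (eigenvalue on $\sigma$) is an injective ring homomorphism $E\to\mathbb{C}$. It is injective because an endomorphism killing $\sigma$ also kills $\bar\sigma$, hence is zero.

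The image of this map lies in $\mathbb{Q}(\tau)=\mathbb{Q}(\sqrt{-d})$. To see this, take $\phi$ with matrix $M$ and eigenvalue $\lambda$. The equation $M\binom{1}{\tau}=\lambda\binom{1}{\tau}$ gives $\lambda=m_{11}+m_{12}\tau$. Conversely, multiplication by $\tau$ on $\mathbb{Q}(\tau)=\mathbb{Q}\cdot1\oplus\mathbb{Q}\tau\cong T(X)_{\mathbb{Q}}$ (via $e_1\mapsto1$, $e_2\mapsto\tau$) is given by a rational matrix. This identification sends $\sigma$ to an eigenvector with eigenvalue $\tau$ in the appropriate sense, which shows the image is all of $\mathbb{Q}(\tau)$. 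Hence $E\cong\mathbb{Q}(\sqrt{-d})$.

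\emph{Conclusion.} $E$ is imaginary quadratic over the totally real field $\mathbb{Q}$, so it is a CM field. Since $\dim_{\mathbb{Q}}T(X)_{\mathbb{Q}}=2=[E:\mathbb{Q}]$, we get $\dim_E T(X)_{\mathbb{Q}}=1$.

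The main point needing care is the surjectivity step: showing that the eigenvalue map hits all of $\mathbb{Q}(\tau)$, not merely $\mathbb{Q}$. The cleanest route is to write the companion-type matrix explicitly from the quadratic relation $c\tau^2+b\tau+a=0$, namely
\[
M=\begin{psmallmatrix}0&1\\-a/c&-b/c\end{psmallmatrix},
\]
and check that $M\binom{1}{\tau}=\tau\binom{1}{\tau}$. Here we use $c\neq0$, which holds because $T(X)$ is positive definite.
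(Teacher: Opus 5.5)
Your proof is correct, but it takes a different route from the paper. The paper gives no argument here and simply cites \cite[Proposition 3.1]{valonicomplexmultp}; you instead verify the statement by a direct period computation.

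You write $\sigma=e_1+\tau e_2$ with $c\tau^2+b\tau+a=0$. You identify $\mathrm{End}_{\mathrm{Hdg}}(T(X)_{\mathbb{Q}})$ with the rational matrices having $(1,\tau)^{T}$ as an eigenvector. The eigenvalue map then embeds this algebra into $\mathbb{Q}(\tau)=\mathbb{Q}(\sqrt{-d})$, and surjectivity is witnessed by $M=\begin{psmallmatrix}0&1\\-a/c&-b/c\end{psmallmatrix}$ together with the identity. This argument is self-contained and elementary. It also makes explicit that the CM field is generated by the period ratio $\tau$, which is the information later fed into Ito's criterion to decide whether $p$ splits. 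The citation buys only brevity.

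One sentence should be corrected: the heuristic that transports multiplication by $\tau$ along $e_1\mapsto 1$, $e_2\mapsto\tau$ does not do what you claim. That transported map has matrix $M^{T}=\begin{psmallmatrix}0&-a/c\\1&-b/c\end{psmallmatrix}$, not $M$. Its $\tau$-eigenvector is $-\bar\tau e_1+e_2$, and $M^{T}$ in general does not preserve $\mathbb{C}\sigma$. For instance, take $T(X)=\mathbb{Z}(2)\oplus\mathbb{Z}(4)$, so $\tau^2=-1/2$. Then $M^{T}\sigma=-\tfrac{\tau}{2}e_1+e_2$, which would be proportional to $\sigma$ only if $\tau^2=-2$.

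The identification that actually intertwines $M$ with multiplication by $\tau$ is $e_1\mapsto-\bar\tau$, $e_2\mapsto 1$, i.e.\ the $\sigma$-coordinate map up to scaling. Your explicit check $M(1,\tau)^{T}=\tau(1,\tau)^{T}$ is correct, so drop the heuristic and keep the matrix computation. The uses of $c>0$ (from positive definiteness) and of $\dim_{\mathbb{Q}}T(X)_{\mathbb{Q}}=2=[E:\mathbb{Q}]$ are fine.
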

\begin{proof}
See \cite[Proposition 3.1]{valonicomplexmultp}.
\end{proof}

Now, let $X$ be a singular $U\oplus E_8^{\oplus 2} \oplus A_1$-primitive complex K3 surface. By Theorem \ref{si fieldofdefinition} we can assume that $X$ is defined over a number field $K$. Let $v$ be a finite place of $K$ of residue characteristic $p$. In order to prove Proposition \ref{singular-existence}, we require the following:
\begin{itemize}
    \item $X$ has (potential) good reduction at $v$.
    \item The reduction $\calX_{\bar{v}}$ of $X$ at $v$ is ordinary.
    \item The reduction $\calX_{\bar{v}}$ of $X$ at $v$ is $U\oplus E_8^{\oplus 2} \oplus A_1$-primitive.
\end{itemize}

Matsumoto has proven that the first requirement holds in greater generality:
\begin{theorem}\cite[Corollary 0.5]{matsumoto2015}\label{matsumotoreduction}
    Any singular K3 surface has potential good reduction at all finite places of residue characteristic $p \geq 5$.
\end{theorem}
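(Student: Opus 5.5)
The plan is to reduce, via the Shioda--Inose structure, to the reduction theory of CM elliptic curves, where potential good reduction is classical. Let $X$ be a singular K3 surface, which by Theorem~\ref{si fieldofdefinition} we may take to be defined over a number field $K$. By Shioda--Inose, $T(X)$ determines a product $A=E_1\times E_2$ of isogenous elliptic curves with complex multiplication by an order in $\mathbb{Q}(\sqrt{-d})$, $d=\mathrm{disc}(T(X))$. The surface $X$ carries a Nikulin involution $\iota$ such that the minimal resolution of $X/\iota$ is $\mathrm{Km}(A)$ and $T(\mathrm{Km}(A))=T(X)(2)$. Equivalently, $X$ is obtained from $\mathrm{Km}(A)$ by taking the double cover branched along eight disjoint $(-2)$-curves whose sum is $2$-divisible in $\Pic$, and then contracting the resulting $(-1)$-curves.

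After a finite extension of $K$ we may assume all of the following are defined over $K$: $E_1$, $E_2$, the sixteen nodal curves of $\mathrm{Km}(A)$, the eight branch curves with the square root of their line bundle, and the involution $\iota$. We then carry out three steps in order.
\begin{enumerate}
\item CM elliptic curves have integral $j$-invariant, so they have potential good reduction at every place. Hence $A$ has good reduction at $v$ after a further finite extension, giving an abelian scheme $\mathcal{A}/\mathcal{O}_{K_v}$.
\item Since $p\neq 2$, the quotient $\mathcal{A}/\pm1$ has only $A_1$-singularities along the sixteen $2$-torsion sections, which are étale over the base. Blowing them up gives a smooth proper model $\mathcal{K}$ of $\mathrm{Km}(A)$, whose special fiber is the Kummer surface of the reduction.
\item Extend the eight branch curves as relative $(-2)$-curves on $\mathcal{K}$; they stay disjoint because the intersection numbers are constant. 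The square-root line bundle extends because $\Pic(\mathcal{K})\to\Pic(\mathcal{K}_{K_v})$ is an isomorphism (regular total space, irreducible special fiber). Form the double cover of $\mathcal{K}$ branched along them; this is smooth since $p\neq2$. Finally, contract the eight relative $(-1)$-curves fibrewise via Artin's contraction in the category of algebraic spaces. The result is a smooth proper algebraic space over $\mathcal{O}_{K_v}$ with generic fiber $X_{K_v}$, giving potential good reduction.
\end{enumerate}

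I expect the main obstacle to be step (3), and in particular to check that the construction really recovers $X$ rather than a twist or another surface with the same transcendental lattice. The branch data must be chosen on the generic fiber so that it matches the given Nikulin involution of $X$. This is why I enlarge $K$ first, so that everything is Galois-invariant and the double cover is determined by a $K$-rational line bundle. The bound $p\ge5$ is more than this argument needs: it only uses $p\neq 2$, plus perhaps the elliptic fibrations with $II^*$ fibres used in Inose's explicit model, which behave well for $p\ge5$.

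As a backup route, I would use a N\'eron--Ogg--Shafarevich type criterion for K3 surfaces: for $p\ge 5$, with a suitable polarization, unramified inertia action on $H^2_{\acute{e}t}(X_{\bar K},\mathbb{Q}_\ell)$ should imply potential good reduction. The Galois action on $\Pic\otimes\mathbb{Q}_\ell$ factors through a finite group. On the two-dimensional transcendental part the action is given by the Hecke character attached to the CM field $\mathbb{Q}(\sqrt{-d})$, so it is abelian and potentially unramified. Hence inertia acts through a finite quotient on all of $H^2$, and the criterion applies after a finite extension.
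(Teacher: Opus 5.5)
The paper gives no proof of this statement (it is quoted from \cite{matsumoto2015}), so your argument has to stand on its own. It breaks in step (3). The closure of a branch curve $C_i$ in $\mathcal{K}$ is flat over $\mathcal{O}_{K_v}$. Its special fibre is an effective divisor $D_i$ in the class $\overline{\mathrm{sp}}(C_i)$ with $D_i^2=-2$ and $D_i\cdot D_j=0$ for $i\neq j$, but nothing you say forces $D_i$ to be irreducible. A $(-2)$-curve can specialize to a tree of smooth rational curves: already $D'+D''$ with $D'^2=D''^2=-2$ and $D'\cdot D''=1$ has square $-2$. For reducible divisors, intersection number $0$ does not mean disjoint, since the $D_i$ may share components or meet. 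Constancy of intersection numbers gives only these numerical statements. If it happens, $\sum_i D_i$ is singular, the double cover of $\mathcal{K}$ is not smooth over $\mathcal{O}_{K_v}$, and there are no relative $(-1)$-curves to contract.

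Irreducibility does hold when $\overline{\mathrm{sp}}$ is surjective. Decompose $D_i$ into components, lift their classes, and note that each lift has square $\geq -2$ and positive degree against a relatively ample class, so is effective on the generic fibre; this contradicts $|C_i|=\{C_i\}$. Surjectivity can be checked at places split in $\mathbb{Q}(\sqrt{-d})$ with $p\nmid d$. At places where $p$ is inert or ramified, however, the reduction is supersingular (compare Theorem~\ref{supersingularreduction}) and the Picard rank jumps to $22$. The many new $(-2)$-curves on the supersingular Kummer surface can then meet $\overline{\mathrm{sp}}(C_i)$ negatively. The statement must cover these places, and your proposal says nothing about them. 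By contrast, the difficulty you flagged (recovering $X$ rather than a twist) is harmless. The section cutting out the branch divisor is unique up to $K^\times$, and at worst a quadratic extension makes its valuation even.

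Your backup route does not close the gap either. The Galois input is fine: finite image on $\Pic\otimes\mathbb{Q}_\ell$, and a potentially unramified Hecke character on the transcendental part. But the N\'eron--Ogg--Shafarevich-type criteria available for K3 surfaces carry extra hypotheses: a polarization of degree small compared with $p$, or the existence of potentially semistable (Kulikov) models. For a singular K3 surface at small $p$ (e.g.\ $p=5$) neither is automatic, and establishing them is essentially the whole problem.

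To rescue the main route you need genuine control of the even eight over $\mathcal{O}_{K_v}$. One option is to describe it by curves that visibly spread out over $\mathcal{E}_1\times\mathcal{E}_2$, for instance curves built from fibres and $2$-torsion sections, or from Inose's explicit fibration with two $II^*$ fibres, whose equations need $6$ invertible. Another is to avoid branch curves altogether by using the other half of Shioda's Kummer sandwich, which exhibits $X$ as the minimal resolution of $\mathrm{Km}(E_1\times E_2)/\sigma$ for a Nikulin involution $\sigma$. An involution can be spread out over an RDP model of $\mathrm{Km}(E_1\times E_2)$ defined by a $\sigma$-invariant polarization made nef on the special fibre by flops. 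One then takes the quotient and resolves simultaneously after a finite base change. Until one of these is carried out, your remark that only $p\neq 2$ is needed is not justified.
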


For the second requirement, the CM field of $X$ controls the (non)supersingular reduction of $X$. More precisely, Ito has shown the following:
\begin{theorem}\cite[Theorem 8.3]{itoreduction}\label{supersingularreduction}
    Let $X$ be a singular complex K3 surface defined over a number field $K$ with complex multiplication given by $E$. Let $p$ be a prime. Let $v$ be a finite place of $K$ whose residue characteristic is $p$. Assume that $p$ does not divide $\mathrm{disc}(\Pic(X))$ and $X$ has good reduction at $v$. Let $\calX$ be a smooth model over $\mathcal{O}_{K_v}$. Then the geometric special fiber $\calX_{\bar{v}}$ is supersingular if and only if $p$ does not split in $E$.
\end{theorem}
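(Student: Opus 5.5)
The plan is to reduce the statement to Deuring's criterion for elliptic curves with complex multiplication, using the Shioda--Inose structure of singular K3 surfaces. The first step takes place over $\mathbb{C}$. By the construction underlying Theorem \ref{si bijection}, $X$ admits a Nikulin involution whose quotient is birational to the Kummer surface $\mathrm{Km}(A)$, where $A=E_1\times E_2$ is a product of elliptic curves with complex multiplication by orders in $E=\mathbb{Q}(\sqrt{-d})$. This correspondence induces a Hodge isometry $T(X)\cong T(A)$ and an isomorphism $T(X)_{\mathbb{Q}}\cong T(\mathrm{Km}(A))_{\mathbb{Q}}$ compatible with the $E$-action. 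After a finite extension of $K$, I may assume that $E_1$, $E_2$, the involution and the rational maps are all defined over $K$. By Serre--Tate, CM abelian varieties have potential good reduction everywhere, so after a further finite extension $A$ has good reduction at $v$.

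The second step transports this correspondence to characteristic $p$. I would take the closure of the graph of the degree-two correspondence $X\dashrightarrow \mathrm{Km}(A)$ inside $\calX\times_{\mathcal{O}_{K_v}}\mathcal{K}$, where $\mathcal{K}$ is a model of the Kummer surface. Its specialization is an algebraic cycle on $\calX_{\bar v}\times \mathrm{Km}(A_{\bar v})$. The hypothesis $p\nmid \mathrm{disc}(\Pic(X))$, which forces $p\neq 2$ whenever the discriminant is even, should guarantee two things: the reduction $\Pic(X)\to\Pic(\calX_{\bar v})$ stays primitive, and the specialized correspondence still induces an isomorphism of $F$-isocrystals between the orthogonal complements of the algebraic classes in $H^2_{\mathrm{cris}}(\calX_{\bar v}/W)[1/p]$ and $H^2_{\mathrm{cris}}(\mathrm{Km}(A_{\bar v})/W)[1/p]$. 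Since a K3 surface is supersingular exactly when all slopes of its $F$-crystal equal $1$, this gives
\[
\calX_{\bar v}\ \text{supersingular} \iff \mathrm{Km}(A_{\bar v})\ \text{supersingular} \iff A_{\bar v}\ \text{supersingular}.
\]
The last equivalence holds because $H^2(\mathrm{Km}(A))\supseteq \wedge^2 H^1(A)$ up to classes of exceptional curves.

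The third step is Deuring's theorem. $A_{\bar v}\cong E_{1,\bar v}\times E_{2,\bar v}$ is supersingular if and only if each $E_{i,\bar v}$ is supersingular, and a CM elliptic curve reduces supersingularly exactly when $p$ does not split in its CM field $E$. As a cross-check independent of the geometric correspondence, one can use the main theorem of CM in the form of Rizov: Frobenius acts on $T(X)\otimes\mathbb{Q}_\ell$ through an element $\pi\in E$ with $\pi\bar\pi=q^2$. If $p$ splits, then $\pi$ and $\bar\pi$ have distinct $p$-adic valuations, which produces slopes $0$ and $2$, so the surface is ordinary. If $p$ is inert or ramified, both valuations coincide, all slopes equal $1$, and Tate's conjecture gives $\rho=22$.

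I expect the main obstacle to be the second step: controlling the specialization of the Shioda--Inose correspondence. One must check that the rational map and the Nikulin involution extend well enough, and that no extra algebraic classes or degeneration of the transcendental isocrystal appear. The condition $p\nmid\mathrm{disc}(\Pic(X))$ must be used precisely here, together with the exclusion of small characteristic. If this becomes too delicate, I would fall back on the Frobenius-in-$E$ argument, which only needs compatibility of the CM action with specialization of $\ell$-adic cohomology together with the crystalline comparison.
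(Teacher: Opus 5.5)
The paper does not prove this statement; it quotes Ito. Ito's argument uses Rizov's main theorem of complex multiplication for K3 surfaces together with integral $p$-adic Hodge theory to compute the height and Picard number of the reduction of an arbitrary CM K3 surface, and the present statement is the special case $\rho=20$. Your reduction to Deuring's criterion via the Shioda--Inose structure is a genuinely different and more elementary route, but it only works in Picard rank $20$. Steps 1 and 3 are fine; Deuring's criterion holds for CM by an arbitrary order and at every prime of good reduction.

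\textbf{The gap is in Step 2.} This step carries the entire comparison, and it is not actually argued. You assert that the specialized correspondence induces an isomorphism of transcendental $F$-isocrystals, and that $p\nmid\mathrm{disc}(\Pic(X))$ \emph{should} guarantee this. That hypothesis has nothing to do with it: it bears only on the $p$-part of the cokernel of the specialization of Picard lattices, which is irrelevant for supersingularity. The issues you list are left open: extension of $\iota$ and of the rational maps over $\mathcal{O}_{K_v}$, extra components in specialized graphs, and jumps of $\rho$.

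Passing through a smooth model $\mathcal{K}$ of $\mathrm{Km}(A)$ is also problematic at $p=2$. The statement allows this case when $\mathrm{disc}(\Pic(X))$ is odd, and the paper uses it with discriminant $7$. There $A[2]$ is not \'etale, and Kummer surfaces of supersingular abelian surfaces in characteristic $2$ are rational.

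\textbf{How to close it: nothing needs to be specialized.}
\begin{enumerate}
\item Composing $X\dashrightarrow\mathrm{Km}(A)\dashleftarrow A$ gives a cycle $Z$ on $X\times A$ defined over a finite extension $K'$.
\item $Z_*$ on $\ell$-adic $H^2$ is $G_{K'}$-equivariant and sends algebraic classes to algebraic classes. By adjointness with $Z^*$, it therefore sends the orthogonal complement of $\NS$ into the orthogonal complement of $\NS$.
\item Over $\mathbb{C}$, $Z_*$ restricts to an injective map $T(X)_{\mathbb{Q}}\to T(A)_{\mathbb{Q}}$ (it scales the form by $4$). Both sides have rank $2$, so this map is bijective. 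Hence $T(X)\otimes\mathbb{Q}_\ell\cong T(A)\otimes\mathbb{Q}_\ell$ as $G_{K'}$-modules.
\item $X$ and $A$ have good reduction at a place $w\mid v$. By smooth proper base change, the characteristic polynomial of $\mathrm{Frob}_w$ on $H^2$ of each special fibre is the product of two factors. One is the characteristic polynomial on $T\otimes\mathbb{Q}_\ell$, which is the same for $X$ and $A$. The other comes from $\NS$ and has roots equal to $q$ times roots of unity.
\item By Katz--Messing, all slopes of $H^2_{\mathrm{cris}}(\calX_{\bar v})$ equal $1$ if and only if the same holds for $A_{\bar v}=E_{1,\bar v}\times E_{2,\bar v}$. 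That happens if and only if both $E_{i,\bar v}$ are supersingular.
\end{enumerate}
This argument works at every $p$ where $X$ has good reduction, including $p=2$. So the hypothesis $p\nmid\mathrm{disc}(\Pic(X))$ plays no role in the supersingularity criterion.

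\textbf{Your cross-check also has a gap in the split case.} A priori $\pi$ could be $q$ times a root of unity. Ruling this out is precisely the Shimura--Taniyama-type input that Ito's argument supplies; in your main route, Deuring supplies it.
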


For the third requirement, observe that it suffices to show that the cokernel of the specialization map $\overline{\mathrm{sp}} \colon \Pic(\calX_{\bar{\eta}}) \to \Pic(\calX_{\bar{v}})$ is torsion-free, i.e. $\overline{\mathrm{sp}}$ is a primitive embedding, since then the reduction inherits the primitive lattice embedding of the geometric generic fiber.

\begin{lemma}[cf. Lemma \ref{tfreelemma}]\cite[Lemma 3.2]{Jang}\label{jangcokernel}
    Let $X$ be a singular K3 surface defined over a number field $K$. Let $p$ be a prime. Let $v$ be a finite place of $K$ of residue characteristic $p$. Assume that $X$ has good reduction at $v$. Let $\calX$ be a smooth model of $X$ over $\mathcal{O}_{K_v}$. Then the cokernel of the specialization map \[\overline{\mathrm{sp}} \colon \Pic(\calX_{\bar{\eta}}) \to \Pic(\calX_{\bar{v}})\] has no prime-to-$p$ torsion.
\end{lemma}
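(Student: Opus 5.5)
We have to show that, for a smooth model $\calX$ of a singular K3 surface over $\mathcal{O}_{K_v}$, the cokernel of $\overline{\mathrm{sp}}$ has no prime-to-$p$ torsion. The plan is to compare Picard groups through prime-to-$p$ étale cohomology via the Kummer sequence, and to use the smooth proper base change theorem.

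\textbf{Step 1: reduce to Henselian base and identify cohomology.} Replace $\mathcal{O}_{K_v}$ by its strict Henselization (or work directly with $\overline{R}$ as in Proposition \ref{specializationmap}). Fix a prime $\ell\neq p$. Since $\calX$ is smooth and proper, smooth and proper base change gives a cospecialization isomorphism
\[H^2_{\acute{e}t}(\calX_{\bar v},\mu_{\ell^n})\cong H^2_{\acute{e}t}(\calX_{\bar\eta},\mu_{\ell^n})\]
for all $n$. We then check that this isomorphism is compatible with the first Chern class maps and with $\overline{\mathrm{sp}}$. On $\calX$ itself, restriction of line bundles to both fibers commutes with $c_1$ into $H^2(\calX,\mu_{\ell^n})$, which is isomorphic to both fiber cohomologies; passing to the direct limit over finite extensions handles $\calX_{\bar\eta}$.

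\textbf{Step 2: the Kummer argument.} Suppose $L\in\Pic(\calX_{\bar v})$ satisfies $\ell L=\overline{\mathrm{sp}}(M)$. For a K3 surface over an algebraically closed field of characteristic $\neq\ell$, the Kummer sequence gives an injection
\[\Pic(Y)/\ell^n\hookrightarrow H^2(Y,\mu_{\ell^n}),\]
because $\Pic(Y)$ is torsion-free and $H^1(Y,\mathbb{G}_m)=\Pic(Y)$. So $c_1(L)\in H^2(\calX_{\bar v},\mu_\ell)$ is some class, and $c_1(\overline{\mathrm{sp}}M)=0$ mod $\ell$. It is cleaner to work $\ell$-adically. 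The cokernel of $\Pic\otimes\mathbb{Z}_\ell\to H^2(-,\mathbb{Z}_\ell(1))$ is torsion-free, since it embeds into $T_\ell\mathrm{Br}$. We have $c_1(M)\in\Pic(\calX_{\bar\eta})\otimes\mathbb{Z}_\ell$, and its image in $H^2(\calX_{\bar\eta},\mathbb{Z}_\ell(1))$ equals $\ell\cdot x$, where $x$ corresponds to $c_1(L)$ under base change. Torsion-freeness of $H^2/\Pic\otimes\mathbb{Z}_\ell$ on the generic fiber then gives $x\in\Pic(\calX_{\bar\eta})\otimes\mathbb{Z}_\ell$.

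\textbf{Step 3: use the singular hypothesis to conclude.} We have $\rho(\calX_{\bar\eta})=20$, and $\overline{\mathrm{sp}}$ is an injective isometry. The special fiber is not supersingular when the cokernel could be nontrivial; otherwise $\rho=22$ and we argue with ranks, using only torsion-freeness of the $\mathbb{Z}_\ell$-saturation. We conclude that $\Pic(\calX_{\bar\eta})\otimes\mathbb{Z}_\ell\to\Pic(\calX_{\bar v})\otimes\mathbb{Z}_\ell$ has $\ell$-torsion-free cokernel. Indeed, its composite with the injective map into $H^2(\calX_{\bar v},\mathbb{Z}_\ell(1))$ agrees with the generic-fiber map, whose cokernel is torsion-free. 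Since $\mathbb{Z}_\ell$ is flat over $\mathbb{Z}$, the integral cokernel then has no $\ell$-torsion. Running over all $\ell\neq p$ completes the proof.

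The main obstacle is Step 1: verifying carefully that cospecialization on étale cohomology is compatible with $\overline{\mathrm{sp}}$ and $c_1$, in the limit over finite extensions and for algebraic spaces. The singular hypothesis is actually not essential beyond ensuring good reduction.
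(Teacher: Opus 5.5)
Your proposal is correct and is the standard $\ell$-adic argument behind Jang's Lemma 3.2, which the paper only cites and does not prove: for $\ell\neq p$, $\Pic\otimes\mathbb{Z}_\ell$ is saturated in $H^2(-,\mathbb{Z}_\ell(1))$ because the cokernel is $T_\ell\mathrm{Br}$, and the cospecialization isomorphism is compatible with $c_1$ and $\overline{\mathrm{sp}}$, so the composite $\Pic(\mathcal{X}_{\bar\eta})\otimes\mathbb{Z}_\ell\to\Pic(\mathcal{X}_{\bar v})\otimes\mathbb{Z}_\ell\hookrightarrow H^2(\mathcal{X}_{\bar v},\mathbb{Z}_\ell(1))$ has torsion-free cokernel, and hence so does $\overline{\mathrm{sp}}\otimes\mathbb{Z}_\ell$. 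The one thing to fix is the first sentence of Step 3, which is false as stated (a supersingular reduction gives a cokernel of rank $2$) and should simply be deleted, since, as you note yourself, neither the Picard ranks nor the singularity hypothesis enter the argument.
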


We can now prove Proposition \ref{singular-existence} by combining the previous three results.

\begin{proof}[Proof of Proposition \ref{singular-existence}]
    We first choose, for every prime $p$, a singular $U \oplus E_8^{\oplus 2} \oplus A_1$-primitive K3 surface $X$ over a number field $K$ with good reduction at a finite place $v$ of residue characteristic $p$. For $p>3$, let $\alpha= p-1$ and choose $X$ a K3 surface of Lemma \ref{k3charnot2} for this $\alpha$. By Theorems \ref{si fieldofdefinition} and \ref{matsumotoreduction}, we can assume after base change that $X$ is defined over a number field and has good reduction at a finite place of residue characteristic $p$. As Theorem \ref{matsumotoreduction} does not treat residue characteristics $2$ and $3$, we give explicit smooth models. For $p=2$, the modular surface $S(\Gamma_1(7))$ associated to $\Gamma_1(7)$ is a singular K3 surface over $\mathbb{Q}$ with Picard lattice $U\oplus E_8^{\oplus2} \oplus T(-1)$, where $T$ is the lattice of Lemma \ref{k3char2} (\cite[Proposition 4.5]{modularseven}), and admits a smooth model over $\mathbb{Z}[1/7]$. For $p=3$, the modular surface $S(\Gamma_1(8))$ associated to $\Gamma_1(8)$ is a singular K3 surface over $\mathbb{Q}$ with Picard lattice $U \oplus E_8^{\oplus2} \oplus \mathbb{Z}(-2) \oplus \mathbb{Z}(-4)$ (\cite[Introduction]{modulareight}) and admits a smooth model over $\mathbb{Z}[1/8]$.

    Let $\calX$ be a smooth model of $X$ over $\mathcal{O}_{K_v}$. By our choice of $X$, the prime $p$ splits in the CM field $\mathbb{Q}(\sqrt{-(p-1)})$ (respectively $\mathbb{Q}(\sqrt{-7}) $ for $p=2$). Hence by Theorem \ref{supersingularreduction} $\calX_{\bar{v}}$ is of finite height. The injectivity of the specialization map and Proposition \ref{heightinequality} yield $\rho(\calX_{\bar{v}})=20$. By Lemma \ref{jangcokernel} the cokernel of $\overline{\mathrm{sp}}$ has no prime-to-$p$ torsion. We now show that it has no $p$-torsion. The specialization map yields a lattice embedding $\Pic(\calX_{\bar{\eta}})\hookrightarrow \Pic(\calX_{\bar{v}})$, which is of finite index since they have equal rank. An easy computation shows that \[\vert\mathrm{disc}(\Pic(\calX_{\bar{\eta}}))\vert=\vert\mathrm{disc}(\Pic(\calX_{\bar{v}}))\vert \cdot [\Pic(\calX_{\bar{v}}) \colon \Pic(\calX_{\bar{\eta}})]^2.\] But $\vert\mathrm{disc}(\Pic(\calX_{\bar{\eta}}))\vert=4(p-1)$ ($7$ for $p=2$), which is not divisible by $p$, hence $p$ does not divide the index. In particular $\Pic(\calX_{\bar{\eta}})\cong \Pic(\calX_{\bar{v}})$ via $\overline{\mathrm{sp}}$.
    
\end{proof}

As a corollary, we obtain the second assertion of Theorem \ref{mainthm}.
\begin{cor}
    Assume $k\neq \bFFp$. For each lattice $\Lambda$ in Table \ref{table1}, there exists a K3 surface $X$ over $k$ with $\Pic(X)\cong\Lambda$.
\end{cor}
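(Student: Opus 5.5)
The plan is to combine the three main ingredients already established: Proposition \ref{singular-existence}, Corollary \ref{latticequasipol} and Proposition \ref{main result}. The corollary is then deduced from the case $k=\overline{\mathbb{F}_p(t)}$ by base change.

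First, let $p=\mathrm{char}(k)$. By Proposition \ref{singular-existence} there is an ordinary $U\oplus E_8^{\oplus 2}\oplus A_1$-primitive K3 surface $X$ over $\bFFp$. By Corollary \ref{latticequasipol}, for each $\Lambda$ in Table \ref{table1} there is a quasipolarization $j\colon\Lambda\hookrightarrow\Pic(X)$. The underlying primitive embedding is the composite of a primitive embedding $\Lambda\hookrightarrow U\oplus E_8^{\oplus2}\oplus A_1$ from Proposition \ref{simplification} with the primitive embedding into $\Pic(X)$. This composite is primitive, and Lemma \ref{primitivepolarization} then adjusts it by reflections to obtain a quasipolarization.

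Next, we apply Proposition \ref{main result} to $(X,j)$. This gives a K3 space $\calX\to\Spec(A)$ with $\Pic(\calX_{\bar\eta})\cong\Lambda$, where $\bar\eta$ is a geometric generic point with $\kappa(\bar\eta)\cong\overline{\mathbb{F}_p(t)}$. Two points need checking here:
\begin{enumerate}
\item $\calX_{\bar\eta}$ is a scheme, i.e.\ a K3 surface and not merely an algebraic space. I would argue this via the quasipolarization: a big and nef line bundle on a K3 space over a field yields a projective contraction, and a smooth proper algebraic space surface over a field is in any case a scheme. So $\calX_{\bar\eta}$ is a genuine K3 surface over $\overline{\mathbb{F}_p(t)}$ with Picard lattice $\Lambda$.
\item The case $k=\bFFp$ is excluded, so $k$ has positive transcendence degree over $\bFFp$.
\end{enumerate}

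Finally, we descend to general $k\neq\bFFp$ by base change. Since $k\neq\bFFp$, we may choose $t\in k$ transcendental over $\FFp$, which gives an embedding $\overline{\mathbb{F}_p(t)}\hookrightarrow k$, and we set $X_k\coloneqq\calX_{\bar\eta}\times k$. The main obstacle is showing $\Pic(X_k)\cong\Pic(\calX_{\bar\eta})$, i.e.\ that the Picard lattice does not jump under an extension of algebraically closed fields. The injection is clear. For surjectivity, the Picard scheme of a K3 surface is étale (discrete, reduced since $H^1(\mathcal{O})=0$), so its $k$-points are $\overline{\mathbb{F}_p(t)}$-points because the latter field is algebraically closed. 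Equivalently, a line bundle over $k$ spreads out over a variety $V$ over $\overline{\mathbb{F}_p(t)}$, and rigidity lets us specialize it to a rational point. Hence $\Pic(X_k)\cong\Lambda$, as required.
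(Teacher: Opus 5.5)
Your proposal is correct and follows the paper's proof: you obtain an ordinary $U\oplus E_8^{\oplus 2}\oplus A_1$-primitive K3 surface over $\bFFp$ from Proposition \ref{singular-existence}, make it $\Lambda$-quasipolarized via Proposition \ref{simplification} and Lemma \ref{primitivepolarization}, apply Proposition \ref{main result} to get a K3 surface over $\overline{\mathbb{F}_p(t)}$ with Picard lattice $\Lambda$, and base change to $k$. Your extra justifications (the embedding $\overline{\mathbb{F}_p(t)}\hookrightarrow k$, and the étaleness of the Picard scheme ensuring the Picard lattice does not grow under extension of algebraically closed fields) supply details the paper leaves implicit, and your point (1) is in fact automatic, since by the paper's definition the geometric fibers of a K3 space are K3 surfaces.
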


\begin{proof}
     By Proposition \ref{singular-existence} there exists an ordinary $U\oplus E_8^{\oplus 2}\oplus A_1$-primitive K3 surface $Y$ over $\bFFp$. By Proposition \ref{simplification} and Lemma \ref{primitivepolarization} $Y$ is $\Lambda$-quasipolarized for any $\Lambda$ in Table \ref{table1}. The corollary follows from applying Proposition \ref{main result} to $Y$, which yields $X$ over $\overline{\FFp(t)}$ with $\Pic(X)\cong\Lambda$, and base changing $X$ to $k$. 
\end{proof}

\resumetoc 

\section{A note on the automorphism groups}\label{automorphism section}

With the classification complete, a natural next step is to determine the corresponding automorphism groups. Over the complex numbers, this was addressed by Kond\=o in \cite{kondofiniteauto}, where he computed the automorphism group of a general K3 in a given family. On a K3 surface $X$ the automorphism group $\Aut(X)$ acts naturally by pullback on the Picard group and the space of global $2$-forms. Understanding these two actions is in principle enough to determine $\Aut(X)$, at least over $\mathbb{C}$ via the Global Torelli theorem. Let $H_X\coloneqq\mathrm{ker}\left( \Aut(X) \to \mathrm{O}(\Pic(X)) \right)$ and $\Aut_s(X)\coloneqq\mathrm{ker}\left(\Aut(X) \to \mathrm{GL}(H^0(X,\omega_X))\right)$. It is known that $\Aut(X)$ acts on $H^0(X,\omega_X)$ via a finite cyclic group (\cite[Lemma 2.1]{matsumotoresultonauto}), hence following Kond\=o one could examine the short exact sequence \[1 \to \Aut_s(X) \to \Aut(X) \to \mathbb{Z}/m\mathbb{Z} \to 1.\]
Over $\mathbb{C}$, Hodge theory shows that $H_X \cap \Aut_s(X)=1$. This observation enables Kond\=o to compute $\Aut_s(X)$ in many cases by studying its faithful action on $\Pic(X)$. This approach however fails in positive characteristic, due to the presence of wild automorphisms in $H_X$ violating $H_X \cap \Aut_s(X)=1$. Our first proposition clarifies the structure of $H_X$ in this case.

\begin{proposition}\label{trivialautoprop}
    Let $X$ be a K3 surface with finite automorphism group over $k$. Then $H_X$ is an extension of a cyclic group of order prime to $p$ by a $p$-group.
\end{proposition}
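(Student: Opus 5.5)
We will analyse $H_X$ through its action on $H^0(X,\omega_X)$. Restricting the character $\Aut(X)\to \mathrm{GL}(H^0(X,\omega_X))=k^\times$ to $H_X$ gives an exact sequence
\[1 \to H_X\cap \Aut_s(X) \to H_X \to \mu \to 1,\]
where $\mu\subseteq k^\times$ is the image. Since $\Aut(X)$ is finite by assumption, $\mu$ is a finite subgroup of $k^\times$, hence cyclic of order prime to $p$ (a field of characteristic $p$ has no nontrivial $p$-th roots of unity). It therefore suffices to prove that $K\coloneqq H_X\cap\Aut_s(X)$, the group of symplectic automorphisms acting trivially on $\Pic(X)$, is a $p$-group.

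Suppose not. Then $K$ contains an element $g$ of prime order $\ell\neq p$. The first step is to note that $g$ is tame and symplectic, and then to show that it acts with finitely many fixed points. We would use the standard fact, valid in positive characteristic for tame automorphisms, that a symplectic automorphism of prime order $\ell\neq p$ has only isolated fixed points, and that their number is given by the holomorphic Lefschetz formula exactly as over $\mathbb{C}$ (Nikulin/Dolgachev--Keum). In particular $\ell\le 7$, and the number of fixed points is $24/(\ell+1)$, namely $8,6,4,3$.

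The second step is to compare this with the topological Lefschetz fixed point formula in $\ell'$-adic étale cohomology for a prime $\ell'\neq p$. Because $g$ acts trivially on $\Pic(X)$, its trace on the part of $H^2_{\acute et}(X,\mathbb{Q}_{\ell'})$ spanned by $\Pic(X)\otimes\mathbb{Q}_{\ell'}$ equals $\rho(X)$. By Section~\ref{reduction-section}, $X$ has finite height and $\Pic(X)$ lies in Table~\ref{table1}, so $\rho(X)\ge 3$. We would show that the trace on the complement is forced as well: the tame symplectic action on the transcendental part should have no invariants, since invariant classes would be algebraic, or else contradict the symplectic condition via crystalline comparison with $H^2(X,\mathcal O_X)$. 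Then the trace is $\rho-(22-\rho)/(\ell-1)$ or similar, which yields a Lefschetz number $2+\mathrm{tr}$ incompatible with $24/(\ell+1)$. More simply, Nikulin's argument over $\mathbb{C}$ shows that a symplectic automorphism of finite order prime to $p$ acts nontrivially on $\Pic$, because the coinvariant lattice $(H^2)_g$ is negative definite, nonzero, and orthogonal to the invariant part containing $T$. The cleanest route is to lift instead: by Proposition~\ref{lbcor 2.1} together with the tame-lifting results of Lieblich--Maulik/Esnault--Oguiso, $(X,g)$ lifts to $W(k)$ with $\Pic$ preserved, so $g$ gives a symplectic automorphism of finite order of a complex K3 surface acting trivially on $\Pic$, hence trivially on $H^2$ by Hodge theory, hence $g=\mathrm{id}$.

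The main obstacle is this lifting step: we must ensure that a tame automorphism acting trivially on $\Pic(X)$ lifts to the Witt-vector deformation of Proposition~\ref{lbcor 2.1}. The plan is to take the deformation $\calX$ with $\Pic(\calX)\cong\Pic(X)$ and argue that $g^*$ fixes an ample class, so $g$ preserves the polarized deformation functor, whose ring is formally smooth. The induced action of the finite group $\langle g\rangle$, of order prime to $p$, on that ring then has a fixed $W(k)$-point by averaging, because the order is invertible. Once the lift exists, Hodge theory over $\mathbb{C}$ shows that $H_{\calX_{\bar\eta}}\cap\Aut_s=1$, and the specialization injectivity of Theorem~\ref{lbthm 2.1} finishes the argument that $K$ has no elements of order prime to $p$, so $K$ is a $p$-group.
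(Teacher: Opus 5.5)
Your argument is correct once you commit to the lifting route, and it is genuinely different from the paper's. You should drop the Lefschetz-trace and ``Nikulin'' paragraphs: as written they are not arguments in characteristic $p$ (for instance, ``invariant classes would be algebraic'' has no justification for \'etale classes).

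The paper stays in characteristic $p$. A nontrivial tame $g\in K$ forces $\rho(X)\ge 9$. Inspecting Roulleau's dual graphs for the lattices of rank $\ge 9$ in Table~\ref{table1} then produces a $(-2)$-curve meeting three others transversally. Since $g$ acts trivially on $\Pic(X)$, it fixes all four curves, hence three points of the first curve, hence that curve pointwise. This contradicts Dolgachev--Keum's theorem that tame symplectic automorphisms have finite fixed locus.

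You instead lift $(X,g)$ equivariantly to $W(k)$, using only finite height and $p\nmid\ell$, and then invoke $H\cap\Aut_s=1$ in characteristic $0$. The paper's proof is short given its inputs but case-based, tied to explicit curve configurations in the finite list. Yours is uniform. It shows that on \emph{every} K3 surface of finite height, no nontrivial tame symplectic automorphism acts trivially on $\Pic(X)$. So the proposition holds for all finite-height K3 surfaces, since $H_X$ is always finite (it fixes an ample class). The price is equivariant deformation theory over $W(k)$.

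When you write it up, make the following points explicit.

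(i) The deformation ring you average on must be formally smooth over $W(k)$. Take $\Def_{(X,\Pic(X))}$, or $\Def_{(X,L)}$ with $L$ ample and \emph{primitive}, so that $c_1^{\mathrm{Hodge}}(L)\neq 0$ by finite height.

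(ii) The fixed $W(k)$-point is built level by level. Lifts of a $G$-fixed $W_n(k)$-point form a torsor under a $k[G]$-module, and $H^1(G,-)=0$. Prorepresentability and Grothendieck existence then turn the fixed point into some $\tilde g\in\Aut(\calX/W(k))$ lifting $g$.

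(iii) $\tilde g$ is symplectic. It acts on $H^0(\calX,\omega_{\calX/W(k)})\cong W(k)$ by an $\ell$-th root of unity reducing to $1$, which must equal $1$ since $\ell\neq p$. This is essential, because in characteristic $0$ non-symplectic elements of $H_X$ do exist.

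(iv) $\tilde g_{\bar\eta}$ acts trivially on $\Pic(\calX_{\bar\eta})$ by injectivity and $\tilde g$-equivariance of $\overline{\mathrm{sp}}$. So you do not need the lift to preserve all of $\Pic$.

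Finally, $\tilde g_{\bar\eta}=\mathrm{id}$ gives $\tilde g=\mathrm{id}$ directly, because $\calX$ is flat and separated over $W(k)$. Theorem~\ref{lbthm 2.1} is not needed.
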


\begin{proof}
    First, define $Q \coloneqq \mathrm{im}\left( H_X \to \mathrm{GL}(H^0(X,\omega_X))\right)$. As noted above, $Q$ is finite and cyclic. The fact that $H^0(X,\omega_X)$ is one-dimensional implies that $Q \subseteq k^{\times}$, hence $p \nmid \vert Q \vert$ since $k^{\times}$ contains no non-trivial $p$-torsion. Let $K \coloneqq \mathrm{ker}\left(H_X \to Q\right)$, the group of symplectic automorphisms acting trivially on $\Pic(X)$. We claim that $K$ is a $p$-group. Observe that if $\alpha\in K$ and $\mathrm{ord}(\alpha)=p^m n$ with $(p,n)=1$, then $\alpha^{p^m}$ is tame, so it suffices to show that $K$ contains no non-trivial tame automorphisms. Suppose for contradiction otherwise, and let $g$ be such. It follows from \cite[Remark 15.1.9]{huyk3} that necessarily $\rho(X) \geq9$. By inspecting the graphs of rational curves in \cite{roul}, we see that $X$ in this case contains the following configuration of smooth rational curves:
\begin{center}
\begin{tikzpicture}[scale=1]
    \draw (-1,0)--(1,0);
    \fill (-1,0) circle (2pt);
    \fill (0,0) circle (2pt);
    \fill (1,0) circle (2pt);
    \node[above] at (-1,0) {$A_2$};
    \node[above] at (0,0) {$A_1$};
    \node[above] at (1,0) {$A_3$};
    \draw (0,0) -- (0,-1);
    \fill (0,-1) circle (2pt);
    \node[below] at (0,-1) {$A_4$};
\end{tikzpicture}.
\end{center}
Since $g$ acts trivially on $\Pic(X)$ and $A_1$ intersects $A_j$, $j\in \{2,3,4\}$ transversely at a point, it fixes these three intersection points on $A_1$. As $A_1$ is isomorphic to $\mathbb{P}^1$, $g$ fixes $A_1$ pointwise. However, finite tame symplectic automorphisms have a finite fixed locus by \cite[Theorem 3.3]{dolgkeumsymplectic}, hence this is a contradiction.
\end{proof}

As an application, we compute $\Aut(X)$ for a K3 surface $X$ with $\Pic(X)\cong U\oplus E_8 ^{\oplus 2} \oplus A_1$, by determining $H_X$ and the action on $\Pic(X)$. Moreover, in the case $\mathrm{char}(k)=2$ below, $H_X \cap \Aut_s(X) \neq 1$, hence the $p$-group that Proposition \ref{trivialautoprop} predicts is realized.

\begin{proposition}\label{exampleauto}
    Let $X$ be a K3 surface over $k$ with $\Pic(X)\cong U\oplus E_8 ^{\oplus 2} \oplus A_1$. Then $\Aut(X)\cong S_3 \times \mathbb{Z}/2\mathbb{Z}$.
\end{proposition}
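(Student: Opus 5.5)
We plan to split $\Aut(X)$ via the exact sequence $1\to H_X\to \Aut(X)\to \mathrm{O}(\Pic(X))$ and determine the kernel and the image separately. First we study the geometry of $X$. Since $\Pic(X)\cong U\oplus E_8^{\oplus 2}\oplus A_1$ contains $U$, $X$ carries an elliptic fibration with a section. Write $\Pic(X)=U\oplus E_8^{\oplus2}\oplus A_1$ and compute with Vinberg's algorithm; this is the lattice of rank $19$ in Table \ref{table1}. We expect the fundamental chamber of $W$ to be cut out by finitely many $-2$-roots, giving a finite Coxeter--Vinberg diagram $\Gamma$. The smooth rational curves on $X$ then correspond to the nodes of $\Gamma$. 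This uses that $\Aut(X)$ is finite by Proposition \ref{converseimplication}, so the nef cone is a finite polyhedron. The diagram should coincide with the one over $\mathbb{C}$, since it depends only on the lattice. Concretely, we would take it from \cite{roul} or Kond\=o's computation, and we expect its symmetry group to be $S_3$.

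Next we identify the image of $\Aut(X)$ in $\mathrm{O}(\Pic(X))$. Any automorphism preserves the nef cone, so the image lies in $\mathrm{Sym}(\Gamma)\cong S_3$. For surjectivity, over $\mathbb{C}$ one would use Torelli. Here we instead lift: by Proposition \ref{lbcor 2.1}, $X$ lifts to $\calX/W(k)$ with $\Pic(\calX_{\bar\eta})\cong\Pic(X)$ preserving ample cones. Theorem \ref{lbthm 2.1} then gives an injective $\Aut(\calX_{\bar\eta})\to\Aut(X)$ compatible with the action on $\Pic$. Over characteristic $0$, the group $\Aut(\calX_{\bar\eta})$ is known by Kond\=o: it realises every diagram symmetry that preserves the discriminant form, and it contains the covering involution acting trivially on $\Pic$. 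So $S_3\subseteq$ image, and the image equals $S_3$.

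Then we determine $H_X$. From the lift we already get an involution $\iota\in H_X$ from characteristic $0$, acting as $-1$ on $H^0(\omega)$ (in characteristic $2$ it acts trivially on $\omega$). So $H_X\supseteq \mathbb{Z}/2$. For the upper bound we use Proposition \ref{trivialautoprop}: $H_X$ is a $p$-group $K$ extended by a cyclic $Q$ of prime-to-$p$ order. For $Q$, an element of $H_X$ acts on $T$-side cohomology (crystalline or $\ell$-adic transcendental part of rank $3$) by a scalar. Since $22-19=3$ is odd, only $\pm1$ should be possible for tame elements, so $|Q|\le 2$. For $K$, wild automorphisms of order $p$ acting trivially on $\Pic$ must preserve every curve in $\Gamma$. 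They must also fix the section and the fibre components of the elliptic fibration pointwise on the trees of valency $\ge3$. Hence they act fibrewise as a translation or as an automorphism fixing the zero section. With Mordell--Weil group trivial (Shioda--Tate: $\rho=19=2+17$ coming from the reducible fibres $E_8,E_8,A_1$), such an element must be fibrewise inversion-type, and this forces order $\le2$. So for $p\ne2$ we get $K=1$. In characteristic $2$, $K$ is the group generated by the inversion $\iota$.

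Finally, $\iota$ is central, being fibrewise inversion for a fibration preserved by all automorphisms. The sequence then splits by lifting $S_3$ from characteristic $0$, giving $\Aut(X)\cong S_3\times\mathbb{Z}/2$. The main obstacle is bounding $H_X$ in small characteristic, especially $p=2,3$, where the wild part and the possible failure of the lifting to realise $S_3$ faithfully need explicit control of the elliptic fibration.
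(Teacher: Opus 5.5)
\textbf{Verdict: the lower bound is correct, but the upper bound $|H_X|\le 2$ is not established.}

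Your lower bound is sound, and it takes a different route from the paper. Lifting via Proposition \ref{lbcor 2.1} and Theorem \ref{lbthm 2.1} embeds the characteristic-$0$ group $S_3\times\mathbb{Z}/2\mathbb{Z}$ into $\Aut(X)$, compatibly with the action on $\Pic(X)$. This group is the same for every characteristic-$0$ K3 surface with this Picard lattice, because $\varphi(m)\mid\rk T=3$ forces the image in $\mathrm{O}(T)$ to be $\{\pm1\}$. The embedding gives you surjectivity onto $\Aut(\Gamma)\cong S_3$, a splitting, and $|H_X|\ge 2$. The paper instead builds these elements by hand: translations by $3$-torsion sections and the inversion on $|A_1+\dots+A_{18}|$, and the inversion on $|A_{24}+A_4|$. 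The real content of the proposition, however, is the upper bound $|H_X|\le 2$, and neither of your two arguments for it works as stated.

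\textbf{The tame part $Q$.} The parity argument ``$22-19=3$ is odd'' only works in characteristic $0$. There $T_{\mathbb{Q}}$ is the smallest rational Hodge substructure containing $H^{2,0}$, so all eigenvalues of $g^*$ on $T$ are primitive $m$-th roots of unity and $T_{\mathbb{Q}}$ is a $\mathbb{Q}(\zeta_m)$-vector space. On the rank-$3$ complement of $\Pic(X)$ in $\ell$-adic or crystalline cohomology there is no such structure, and $g^*$ need not be a scalar. For example, a characteristic polynomial $(x^2+x+1)(x-1)$ on that piece is compatible with $g$ acting on $H^0(X,\omega_X)$ by a primitive cube root of unity. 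The slope decomposition of the ordinary crystal only yields eigenvalues of the form $\lambda,\lambda^{-1},\pm1$. So orders $3,4,6$ are not excluded.

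\textbf{The wild part.} You correctly reduce, via the fibration with reducible fibres $II^*,II^*,I_2/III$ and trivial Mordell--Weil group, to $H_X\subseteq\Aut(E_\eta,O)$: an element of $H_X$ fixes the three base points under the reducible fibres and fixes the unique section. But the claim that this ``forces order $\le 2$'' holds only if $j(E_\eta)\neq 0,1728$. Otherwise $\Aut(E_\eta,O)$ can be larger: $\mathbb{Z}/4\mathbb{Z}$ or $\mathbb{Z}/6\mathbb{Z}$, and in characteristic $3$ resp.\ $2$ up to order $12$ resp.\ $24$, with wild $3$- resp.\ $2$-elements. Those are exactly the elements you need to exclude. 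In characteristic $\ge 5$ the Euler number count forces a multiplicative fibre, hence non-constant $j$. In characteristic $2$ and $3$, wild ramification enters that count and the argument breaks down.

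\textbf{How the paper closes the gap.} It runs Tate's algorithm to obtain the model $y^2+a_1txy=x^3+a_2t^2x^2+t^5(t-1)^2$ with $a_1^2+4a_2\neq 0$, and checks $j\neq 0,1728$ directly. Once this is done, this single fibration bounds all of $H_X$, tame and wild at once. Your separate treatment of $Q$ via Proposition \ref{trivialautoprop} then becomes unnecessary.
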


\begin{proof}   
    The dual graph of rational curves on $X$ was determined by Kond\={o} in \cite{kondofiniteauto}:   
\begin{center}
\begin{tikzpicture}[scale=0.9]
\draw (-3,0) -- (3,0);
\draw (0,5) -- (3,0);
\draw (0,5) -- (-3,0);
\draw (-3/2,5/2) -- (3,0);
\draw (3/2,5/2) -- (-3,0);
\draw (0,0) -- (0,5);

\draw (0,5) node {$\bullet$};
\draw (1/2,-5/3*1/2 + 5) node {$\bullet$};
\draw (1, -5/3*2/2 + 5) node {$\bullet$};
\draw (3/2,-5/3*3/2 + 5) node {$\bullet$};
\draw (2,-5/3*4/2 + 5) node {$\bullet$};
\draw (5/2, -5/3*5/2 + 5) node {$\bullet$};
\draw (3, -5/3*6/2 + 5) node {$\bullet$};

\draw (-1/2,5/3*-1/2 + 5) node {$\bullet$};
\draw (-1, 5/3*-2/2 + 5) node {$\bullet$};
\draw (-3/2,5/3*-3/2 + 5) node {$\bullet$};
\draw (-2,5/3*-4/2 + 5) node {$\bullet$};
\draw (-5/2, 5/3*-5/2 + 5) node {$\bullet$};
\draw (-3, 5/3*-6/2 + 5) node {$\bullet$};

\draw (0.75,5/9*0.75+5/3) node {$\bullet$};
\draw (-0.75,5/9*0.75+5/3) node {$\bullet$};
\draw (0, 5/6) node {$\bullet$};
\draw (0.75,5/9*0.75+5/3) node [above]{$A_{22}$};
\draw (-0.75,5/9*0.75+5/3) node [above]{$A_{23}$};
\draw (0, 5/6) node  [right]{$A_{24}$};

\draw [dotted][very thick] (0, 5/6) -- (-0.75,5/9*0.75+5/3);
\draw [dotted][very thick] (0, 5/6) -- (0.75,5/9*0.75+5/3);
\draw [dotted][very thick] (0.75,5/9*0.75+5/3) -- (-0.75,5/9*0.75+5/3);

\draw (-1.5, 5/9*-1.5+5/3) node {$\bullet$};
\draw (1.5, 5/9*-1.5+5/3) node {$\bullet$};
\draw (0, -5/3*2/2 + 5) node {$\bullet$};
\draw [very thick] (0, -5/3*2/2 + 5) -- (0,0);
\draw [very thick] (-1.5, 5/9*-1.5+5/3) -- (1.5,-5/3*3/2 + 5);
\draw [very thick] (1.5, 5/9*-1.5+5/3) -- (-1.5,-5/3*3/2 + 5);
\draw (-1.5, 5/9*-1.5+5/3) node [below]{$A_{19}$};
\draw (1.5, 5/9*-1.5+5/3) node [below]{$A_{20}$};
\draw (0, -5/3*2/2 + 5) node [right]{$A_{21}$};

\draw (-3,0) node {$\bullet$};
\draw (-2,0) node {$\bullet$};
\draw (-1,0) node {$\bullet$};
\draw (0,0) node {$\bullet$};
\draw (1,0) node {$\bullet$};
\draw (2,0) node {$\bullet$};
\draw (3,0) node {$\bullet$};

\draw (-3,0) node [below]{$A_{1}$};
\draw (-2,0) node [below]{$A_{2}$};
\draw (-1,0) node [below]{$A_{3}$};
\draw (0,0) node [below]{$A_{4}$};
\draw (1,0) node [below]{$A_{5}$};
\draw (2,0) node [below]{$A_{6}$};
\draw (3,0) node [below]{$A_{7}$};

\draw (1/2,-5/3*1/2 + 5) node [right]{$A_{12}$};
\draw (1, -5/3*2/2 + 5) node  [right]{$A_{11}$};
\draw (3/2,-5/3*3/2 + 5) node  [right]{$A_{10}$};
\draw (2,-5/3*4/2 + 5) node  [right]{$A_{9}$};
\draw (5/2, -5/3*5/2 + 5) node  [right]{$A_{8}$};
\draw (0,5) node [above]{$A_{13}$};
\draw (-1/2,5/3*-1/2 + 5) node [left]{$A_{14}$};
\draw (-1, 5/3*-2/2 + 5) node  [left]{$A_{15}$};
\draw (-3/2,5/3*-3/2 + 5) node  [left]{$A_{16}$};
\draw (-2,5/3*-4/2 + 5) node  [left]{$A_{17}$};
\draw (-5/2, 5/3*-5/2 + 5) node  [left]{$A_{18}$};

\end{tikzpicture}
\end{center} 
    Here, a thick line denotes an intersection number equal to $2$ and a dotted line denotes an intersection number equal to $6$. Denote this graph by $\Gamma$ and observe that $\Aut(\Gamma)=S_3$. There is a natural representation $\rho \colon \Aut(X) \to \Aut(\Gamma)$ and we claim that $\rho$ is surjective and admits a section.
    
    The pencil $\vert A_1+\dots +A_{18}\vert$ induces an elliptic fibration $\pi \colon X \to \mathbb{P}^1$ with one reducible singular fiber of type $I_{18}$ and sections $A_{19}$, $A_{20}$ and $A_{21}$. Consider the group $\Aut_{\pi}(X/ \mathbb{P}^1)$ of $\mathbb{P}^1$-automorphisms of $X$, that is, the group of automorphisms of $X$ that preserve $\pi$ and act trivially on the base. We will show that $\Aut_{\pi}(X/\mathbb{P}^1)=S_3$ and that it realizes every automorphism of $\Gamma$. To understand the action on the graph it suffices to understand the action on the sections of the fibration, which are in bijection with the set of $k(t)$-rational points of the generic fiber $E_{\pi}$. Restriction and spreading out yield an isomorphism $\Aut_{\pi}(X/\mathbb{P}^1) \cong \Aut(E_{\pi})$, hence we can compute everything on the generic fiber. In the following, we identify the $k(t)$-rational points of $E_{\pi}$ with the sections $A_{19}$, $A_{20}$, $A_{21}$ and we choose $A_{19}$ as the origin $O$ of $E_{\pi}$. Now, $\Aut(E_{\pi})= T\rtimes \Aut(E_{\pi},O)$, where $T$ is the normal subgroup generated by translations along $k(t)$-rational points and $\Aut(E_{\pi},O)$ is the group of elliptic curve automorphisms with $O$ as the origin. We directly see that $T\cong E_\pi(k(t))\cong \mathbb{Z}/3\mathbb{Z}$. Also $\Aut(E_{\pi},O)\cong \mathbb{Z}/2\mathbb{Z}$, necessarily generated by the inversion homomorphism, because the presence of a fiber of type $I_{18}$ implies that $j(E_{\pi})$ has a pole and \cite[Appendix A, Prop. 1.2.(c)]{Silverman2009}. Translation by the $3$-torsion section $A_{20}$ permutes the three sections and so acts by rotation on $\Gamma$. The involution which is induced by the inversion homomorphism on $E_{\pi}$ acts by reflection on $\Gamma$, because it permutes $A_{20}$ and $A_{21}$ while preserving $A_{19}$. Hence, the natural map $\Aut(X) \to \Aut(\Gamma)$ is surjective and the following short exact sequence splits: \[1 \to H_X \to \Aut(X) \to \Aut(\Gamma) \to 1.\]
    
    It remains to compute $H_X$. The pencil $\vert A_{24}+A_4 \vert$ induces an elliptic fibration $\pi' \colon X \to \mathbb{P}^1$ with two reducible singular fibers of type $I_2$/ $III$ and $I_{12}^*$ and sections $A_3$ and $A_5$. The automorphism $\sigma$ of $X$ that is induced by the inversion homomorphism of the generic fiber $E_{\pi'}$ is an involution that acts trivially on $\Gamma$ (because it preserves the sections $A_3$ and $A_5$), which means that $\langle\sigma \rangle\subseteq H_X$. To show equality, we determine the order of $H_X$. For this we use a third elliptic fibration $\pi'' \colon X \to \mathbb{P}^1$ which is induced by the pencil $\vert A_{21}+A_{24} \vert$ and has three reducible singular fibers, two of type $II^*$ and one of type $I_2$/$III$ and one section $A_{13}$. Observe that if $g\in H_X$ then $g$ preserves the three reducible fibers of $\pi''$, so $H_X \subseteq \Aut_{\pi''}(X/\mathbb{P}^1)$. Since $\pi''$ admits the unique section $A_{13}$ the same arguments as in the previous paragraph show that $\Aut_{\pi''}(X/\mathbb{P}^1)\cong \Aut( E_{\pi''},A_{13})$. If we show that $j(E_{\pi''}) \neq0$, $1728$, then the latter group is $\mathbb{Z}/2\mathbb{Z}$ by \cite[Appendix A, Prop. 1.2.(c)]{Silverman2009}. For this, we give an explicit Weierstrass model. Without loss of generality we can assume that the singular fibers are over $0$, $1$ and $\infty$. Then, one can run Tate's algorithm to check that a Weierstrass model is given affinely by an equation of the form \[y^2+a_1txy=x^3+a_2t^2x^2+t^5(t-1)^2\]
    with $a_1,a_2 \in k$, $a_1^2+4a_2\neq 0$. A straightforward calculation shows that $j(E_{\pi''}) \neq0$, $1728$.
\end{proof}

\begin{remark}
    As pointed out to us by Kond\=o, the following interesting phenomenon appears in the next possible case, namely for K3 surfaces with Picard lattice isometric to $U\oplus E_8^{\oplus2}$. In characteristic $2$, a Kummer K3 surface $X$ associated with the product of an ordinary and a supersingular elliptic curve has $\Pic(X) \cong U\oplus E_8^{\oplus2}$ and $\Aut(X) \cong \mathrm{SL}_2(\mathbb{F}_3) \times \mathbb{Z}/2\mathbb{Z}$ with $H_X=\mathrm{SL}_2(\mathbb{F}_3)$ (\cite[Section 6]{kondo2025automorphismgroupskummersurfaces}). However, the group $\mathrm{SL}_2 (\mathbb{F}_3)$ cannot appear as $H_X$ for any K3 surface $X$ with $\Pic(X) \cong U \oplus E_8^{\oplus 2}$ when $\mathrm{char}(k) \neq 2$. Indeed, if $\mathrm{char}(k)=0$, then $H_X \subseteq \mathrm{Aut}(X)/\mathrm{Aut}_s(X)$ and the latter group is cyclic, while if $\mathrm{char}(k)\geq 3$ then $\mathrm{SL}_2(\mathbb{F}_3)$ does not satisfy Proposition \ref{trivialautoprop}.
\end{remark}

\begin{remark}
    It is plausible that an analysis similar to the one in the proof of Proposition \ref{exampleauto} can be used to compute the automorphism groups in the remaining cases. The symmetries of the graph of rational curves can often be realized by using appropriate elliptic fibrations. The main difficulty lies in computing $H_X$, particularly due to the possible presence of wild automorphisms. In general we expect that, for every Picard lattice in Table \ref{table1} and for every positive characteristic, the automorphism group of a general K3 surface with that Picard lattice is isomorphic to that of a general complex K3 surface with the same Picard lattice, as computed by Kond\=o in \cite{kondofiniteauto}.
\end{remark}

\resumetoc

\newpage

\appendix

\section{Classification table and discriminant data}\label{table}

In this appendix, we present the two tables referenced in this paper. In Table \ref{table1}, we list the Picard lattices of complex K3 surfaces with finite automorphism group. In Table \ref{table2}, we list the lattices of $\mathrm{rank}\geq 10$ of Table \ref{table1} together with the calculations of the minimal number of generators of their discriminant groups. Table \ref{table2} is intended to supplement the proof of Proposition \ref{simplification}.

Let us first introduce the lattices $S_i$, $S_{a,b,c}$, $S'_{4,1,2}$, $L(24)$ and $L(27)$.
\begin{enumerate}
\item The lattice $S_{1,1,1}$ is the rank $3$ lattice generated by vectors $e_1$, $e_2$, $e_3$ with Gram matrix 
\[\begin{pmatrix}
    -2 & 0 & 1 \\
    0 & -2 & 2 \\
    1 & 2 & -2 
\end{pmatrix}.\]
\item For $a$, $b$, $c \in \mathbb{Z}$, the lattice $S_{a,b,c}$ is the sublattice of $S_{1,1,1}$ generated by $ae_1$, $be_2$, $ce_3$. 
\item The lattice $S'_{4,1,2}$ is the sublattice of $S_{1,1,1}$ generated by $2e_1 + e_3$, $e_2$, $2e_3$.
\item The lattice $S_0$ is the rank $2$ lattice with Gram matrix 

\[\begin{pmatrix}
    0& -3\\
    -3& -2
\end{pmatrix}.\]
\item $S_1=\mathbb{Z}(6)\oplus A_1^{\oplus 2}$.
\item $S_2=\mathbb{Z}(36)\oplus A_2$.
\item $S_3= \mathbb{Z}(12)\oplus A_2$. 
\item The lattice $S_4$ is the rank $3$ lattice with Gram matrix 
\[\begin{pmatrix}
    2& 1 & 2 \\
    1 & -2 &1\\
    2 & 1 & -2
\end{pmatrix}.\]
\item $S_5=\mathbb{Z}(4)\oplus A_2$.
\item The lattice $S_6$ is the rank $3$ lattice with Gram matrix
\[\begin{pmatrix}
    2&2&3\\
    2&-2&1\\
    3&1&-2
\end{pmatrix}.\]
\item The lattices $L(24)$ and $L(27)$ are the rank $4$ lattices with Gram matrices, respectively
\[\begin{pmatrix}
    2&1&1&1\\
    1&-2&0&0\\
    1&0&-2&0\\
    1&0&0&-2
\end{pmatrix},
\begin{pmatrix}
    12&2&0&0\\
    2&-2&1&0\\
    0&1&-2&1\\
    0&0&1&-2
\end{pmatrix}.\]
\end{enumerate}

\begin{longtable}{|c|c|c|c|c|}
\captionsetup{font=small}
     \caption{Picard lattices of complex K3 surfaces with finite automorphism group}\label{table1} \\
     
     \hline
     \multicolumn{5}{|c|}{\textbf{Rank 1}}\\
     \hline
     \multicolumn{5}{|c|}{$\mathbb{Z}(2d)$, $d>0$}\\
     \hline
     \multicolumn{5}{|c|}{\textbf{Rank 2}}\\
     \hline
     \multicolumn{5}{|c|}{Even hyperbolic lattices such that there exists $\alpha\neq0$ with $\alpha^2=0$ or $\alpha^2=-2$}\\
     \hline
     \multicolumn{5}{|c|}{\textbf{Rank 3} }\\
     \hline
     $S_1$& $S_2$ & $S_3$&$S_4$ &$S_5$\\
     \hline
     $S_6$& $S_{1,1,1}$&$S_{1,1,2}$&$S_{1,1,3}$&$S_{1,1,4}$\\
     \hline
     $S_{1,1,6}$&$S_{1,1,8}$&$S_{1,2,1}$&$S_{1,3,1}$ &$S_{1,4,1}$\\
     \hline
     $S_{1,5,1}$ &$S_{1,6,1}$&$S_{1,9,1}$&$S_{4,1,1}$ &$S_{5,1,1}$\\
     \hline
     $S_{6,1,1}$&$S_{7,1,1}$ &$S_{8,1,1}$ &$S_{10,1,1}$&$S_{12,1,1}$\\
     \hline
     $S'_{4,1,2}$ & \multicolumn{4}{c|}{  }\\
     \hline
     \multicolumn{5}{|c|}{\textbf{Rank 4} } \\
     \hline
     $L(24)$&$L(27)$&$\mathbb{Z}(8)\oplus A_1 ^{\oplus 3}$ &$U\oplus A_1 ^{\oplus 2}$ & $U(2)\oplus A_1 ^{\oplus 2}$\\
     \hline
     $U(3)\oplus A_1 ^{\oplus 2}$&$U(4)\oplus A_1 ^{\oplus 2}$& $U\oplus A_2$&$U(2)\oplus A_2$ &$U(3)\oplus A_2$\\  
     \hline
     $U(6)\oplus A_2$&$S_0 \oplus A_2$&$\mathbb{Z}(4)\oplus \mathbb{Z}(-4) \oplus A_2$& $\mathbb{Z}(4)\oplus A_3$&  \\
     \hline
     \multicolumn{5}{|c|}{\textbf{Rank 5}} \\
     \hline
     $U\oplus A_1 ^{\oplus 3}$&$U(2)\oplus A_1 ^{\oplus 3}$ &$U(4)\oplus A_1 ^{\oplus 3}$ &$U\oplus A_1 \oplus A_2$ &$U\oplus A_3$\\
     \hline
     $\mathbb{Z}(4) \oplus D_4$& $\mathbb{Z}(8) \oplus D_4$ & $\mathbb{Z}(16) \oplus D_4$ & $\mathbb{Z}(6)\oplus A_2 ^{\oplus2}$ &  \\
     \hline
     \multicolumn{5}{|c|}{\textbf{Rank 6}}\\
     \hline
     $U(3)\oplus A_2 ^{\oplus2}$&$U(4)\oplus D_4$ &$U\oplus A_4$ &$U\oplus A_1 \oplus A_3$ & $U\oplus A_2 ^{\oplus 2}$\\
     \hline
     $U\oplus A_1 ^{\oplus 2}\oplus A_2$&$U(2)\oplus A_1^{\oplus4}$ & $U\oplus A_1^{\oplus4}$ &$U(2)\oplus D_4$ &$U\oplus D_4$  \\
     \hline
     \multicolumn{5}{|c|}{\textbf{Rank 7}}\\
     \hline
     $U\oplus D_4\oplus A_1$&$U\oplus A_1^{\oplus5}$ & $U(2)\oplus A_1^{\oplus 5}$&$U\oplus A_1\oplus A_2^{\oplus 2}$ & $U\oplus A_1^{\oplus 2} \oplus A_3$ \\
     \hline
     $U\oplus A_2\oplus A_3$& $U\oplus A_1\oplus A_4$& $U\oplus A_5$&$U\oplus D_5$ &   \\
     \hline 
     \multicolumn{5}{|c|}{\textbf{Rank 8}}\\
     \hline
     $U\oplus D_6$&$U\oplus D_4\oplus A_1^{\oplus2}$ &$U\oplus A_1^{\oplus 6}$ &$U(2)\oplus A_1^{\oplus 6}$ & $U\oplus A_2^{\oplus 3}$\\
     \hline
     $U\oplus A_3^{\oplus 2}$& $U\oplus A_2\oplus A_4$&$U\oplus A_1\oplus A_5$ &$U\oplus A_6$ &$U\oplus A_2\oplus D_4$ \\
     \hline 
     $U\oplus A_1\oplus D_5$&$U\oplus E_6$ & \multicolumn{3}{c|}{   }\\
     \hline
     \pagebreak
     \hline
     \multicolumn{5}{|c|}{\textbf{Rank 9}}\\
     \hline
     $U\oplus E_7$&$U\oplus D_6\oplus A_1$ &$U\oplus D_4 \oplus A_1^{\oplus 3}$ & $U\oplus A_1^{\oplus 7}$ & $U(2)\oplus A_1^{\oplus 7}$\\
     \hline
     $U\oplus A_7$&$U\oplus D_4\oplus A_3$& $U\oplus D_5 \oplus A_2$& $U\oplus D_7$& $U\oplus A_1 \oplus E_6$\\
     \hline
     \multicolumn{5}{|c|}{\textbf{Rank 10}}\\
     \hline 
     $U\oplus E_8$& $U\oplus D_8$ &$U\oplus E_7 \oplus A_1$& $U\oplus D_4^{\oplus 2}$ & $U\oplus D_6\oplus A_1^{\oplus 2}$\\
     \hline
     $U(2)\oplus D_4^{\oplus 2}$& $U\oplus D_4\oplus A_1^{\oplus 4}$ &$U\oplus A_1^{\oplus 8}$&$U\oplus A_2 \oplus E_6$& \\
     \hline
     \multicolumn{5}{|c|}{\textbf{Rank 11}}\\
     \hline 
     $U\oplus E_8\oplus A_1$ & $U\oplus D_8\oplus A_1$&$U\oplus D_4^{\oplus 2}\oplus A_1$ &$U\oplus D_4\oplus A_1^{\oplus 5}$& \\
     \hline
     \multicolumn{5}{|c|}{\textbf{Rank 12}}\\
     \hline
     $U\oplus E_8\oplus A_1^{\oplus 2}$ & $U\oplus D_8\oplus A_1^{\oplus 2}$ &$U\oplus D_4^{\oplus 2}\oplus A_1^{\oplus 2}$ & $U\oplus A_2 \oplus E_8$& \\
     \hline
     \multicolumn{5}{|c|}{\textbf{Rank 13}}\\
     \hline
     $U\oplus E_8\oplus A_1^{\oplus 3}$ & $U\oplus D_8\oplus A_1^{\oplus 3}$& $U\oplus E_8 \oplus A_3$& \multicolumn{2}{c|}{  }\\
     \hline
     \multicolumn{5}{|c|}{\textbf{Rank 14}}\\
     \hline
     $U\oplus E_8 \oplus D_4$& $U\oplus D_8\oplus D_4$&$U\oplus E_8 \oplus A_1^{\oplus 4}$& \multicolumn{2}{c|}{  }\\
     \hline
     \multicolumn{5}{|c|}{\textbf{Rank $\geq$ 15}}\\
     \hline
     $U\oplus E_8 \oplus D_4\oplus A_1$& $U\oplus E_8 \oplus D_6$& $U\oplus E_8 \oplus E_7$ & $U\oplus E_8^{\oplus 2}$& $U\oplus E_8^{\oplus 2} \oplus A_1$\\
     \hline
     
\end{longtable}
\clearpage

\begin{longtable}{|c|c|c|c|}
\captionsetup{font=small}
\caption{Supplement to the proof of Proposition \ref{simplification}}\label{table2}\\
    \hline
    Lattice $\Lambda$&$\mathrm{Rank}\, \Lambda $&$l(A_{\Lambda})$&$18-\mathrm{rank}\, \Lambda$\\
    \hline
    $U\oplus E_8$ & 10 &0 &8 \\
    \hline
    $U\oplus D_8$ & 10 &2 & 8\\
    \hline
    $U\oplus E_7 \oplus A_1$ & 10 &2 &8 \\
    \hline
    $U\oplus D_4^{\oplus 2}$ & 10 & 4& 8\\
    \hline
    $U\oplus D_6\oplus A_1^{\oplus 2}$ & 10 &4 &8 \\
    \hline
    $U(2)\oplus D_4^{\oplus 2}$ & 10 &6& 8\\
    \hline
    $U\oplus D_4\oplus A_1^{\oplus 4}$ & 10 &6 &8 \\
    \hline
    $U\oplus A_1^{\oplus 8}$ & 10 & 8& 8\\
    \hline
    $U\oplus A_2 \oplus E_6$ & 10 & 2& 8\\
    \hline
    $U\oplus E_8\oplus A_1$ & 11 & 1& 7\\
    \hline
    $U\oplus D_8\oplus A_1$ & 11 & 3& 7\\
    \hline
    $U\oplus D_4^{\oplus 2}\oplus A_1$ & 11 &5 &7 \\
    \hline
    $U\oplus D_4\oplus A_1^{\oplus 5}$ & 11 & 7& 7\\
    \hline
    $U\oplus E_8\oplus A_1^{\oplus 2}$ & 12 & 2& 6\\
    \hline
    $U\oplus D_8\oplus A_1^{\oplus 2}$ & 12 & 4& 6\\
    \hline
    $U\oplus D_4^{\oplus 2}\oplus A_1^{\oplus 2}$ & 12 &6 &6 \\
    \hline
    $U\oplus A_2 \oplus E_8$ & 12 & 1& 6\\
    \hline
    $U\oplus E_8\oplus A_1^{\oplus 3}$ & 13 &3 &5 \\
    \hline
    $U\oplus D_8\oplus A_1^{\oplus 3}$ & 13 & 5& 5\\
    \hline
    $U\oplus E_8 \oplus A_3$ & 13 & 1& 5\\
    \hline
    $U\oplus E_8 \oplus D_4$ & 14 & 2& 4\\
    \hline
    $U\oplus D_8\oplus D_4$ & 14 & 4& 4\\
    \hline
    $U\oplus E_8 \oplus A_1^{\oplus 4}$ & 14 &4 &4 \\
    \hline
    $U\oplus E_8 \oplus D_4\oplus A_1$ &  15 & 3& 3\\
    \hline
    $U\oplus E_8 \oplus D_6$ & 16 & 2& 2\\
    \hline
    $U\oplus E_8 \oplus E_7$ & 17 & 1& 1\\
    \hline
\end{longtable}

\printbibliography

\end{document}